\documentclass[12pt,a4paper]{article}

\usepackage{amssymb}
\usepackage{amsmath, amsthm}
\usepackage{arydshln}
\usepackage{epsfig}
\usepackage{setspace}
\usepackage{comment}
\usepackage{geometry}
\usepackage{hyperref}

\geometry{left=27mm,right=27mm,top=27mm,bottom=27mm}

\def\RR{\mathbb{R}}
\def\CC{\mathbb{C}}
\def\ZZ{\mathbb{Z}}
\def\QQ{\mathbb{Q}}
\def\FF{\mathbb{F}}
\def\KK{\mathbb{K}}
\def\11{\mathbf{1}}

\numberwithin{equation}{section}

\newcommand{\rank}{\mathop{\rm rank} }

\newcommand{\Conv}{\mathop{\rm Conv} }

\newcommand{\Det}{\mathop{\rm Det} }
\newcommand{\ncrank}{\mathop{\rm nc\mbox{-}rank} }

\newcommand{\proj}{{\rm proj}}

\newcommand{\expvec}{\mathop{\rm vec}}
\newcommand{\vecspan}{\mathop{\rm span}}
\newcommand{\ncdet}{\mathop{\rm Det} }
\newcommand{\mindeg}{\mathop{\rm mindeg} }

\newcommand{\eps}{\varepsilon}
\DeclareMathOperator{\poly}{poly}

\newtheorem{Thm}{Theorem}[section]
\newtheorem{Prop}[Thm]{Proposition}
\newtheorem{Lem}[Thm]{Lemma}

\theoremstyle{definition}
\newtheorem*{Clm}{Claim}

\newtheorem{Rem}[Thm]{Remark}

\usepackage{xcolor}

\usepackage[textsize=scriptsize]{todonotes}

\title{
Algebraic combinatorial optimization\\
on the degree of  determinants \\ of 
noncommutative symbolic matrices}
\author{Hiroshi HIRAI\footnote{Graduate School of Mathematics,
		Nagoya University, Furocho, Chikusaku, Nagoya, 464-8602, Japan.
		\texttt{\footnotesize hirai.hiroshi@math.nagoya-u.ac.jp}},\quad
		Yuni IWAMASA\footnote{Graduate School of Informatics, Kyoto University, Kyoto, 606-8501, Japan.
        \texttt{iwamasa@i.kyoto-u.ac.jp}},\quad
		Taihei OKI\footnote{Department of Mathematical Informatics, Graduate School of Information Science and Technology, The University of Tokyo, Tokyo, 113-8656, Japan.
        \texttt{oki@mist.i.u-tokyo.ac.jp}},\quad
		Tasuku SOMA\footnote{The Institute of Statistical Mathematics, Tokyo, 190-8562, Japan.
        \texttt{soma@ism.ac.jp}}
}

\begin{document}
	\maketitle

\begin{abstract}
We address the computation of the degrees of minors of a noncommutative symbolic matrix of form
\[
	A[c] := \sum_{k=1}^m A_k t^{c_k} x_k, 
\]
where $A_k$ are matrices over a field $\KK$, $x_i$ are noncommutative variables, $c_k$ are integer weights, and $t$ is a commuting variable specifying the degree. This problem extends noncommutative Edmonds' problem (Ivanyos et al. 2017), and can formulate various combinatorial optimization problems. Extending the study by Hirai 2018, and Hirai, Ikeda 2022, we provide novel duality theorems and polyhedral characterization for the maximum degrees of minors of $A[c]$ of all sizes, and develop a strongly polynomial-time algorithm for computing them. 
This algorithm is viewed as a unified algebraization of the classical Hungarian method for bipartite matching and the weight-splitting algorithm for linear matroid intersection. 
As applications, we provide polynomial-time algorithms for weighted fractional linear matroid matching and linear optimization over rank-2 Brascamp-Lieb polytopes.
\end{abstract}

Keywords: Combinatorial optimization, noncommutative Edmonds' problem, nc-rank, Dieudonn\'e determinant, Euclidean building,
fractional linear matroid matching, 
Brascamp-Lieb polytope.\\

MSC classifications: 90C27, 68Q25

\section{Introduction}
The theory of combinatorial optimization, particularly for well-solvable classes of combinatorial optimization problems, was founded by J.~Edmonds through his groundbreaking work in the 1960s and 1970s. 
In addition to the notion of a polynomial-time algorithm itself, 
he invented several important concepts on it, such as good characterization 
($\simeq$ polynomial-time verifiable min-max relation $\simeq$  NP $\cap$ co-NP characterization) via LP-duality and total dual integrality, and a powerful framework of polymatroids/submodular flows. With boosting of the separation-optimization equivalence~\cite{GLS} via the ellipsoid method, 
these LP-based polyhedral methods (a.k.a. {\em polyhedral combinatorics}) 
are now understood as a basic paradigm in combinatorial optimization, 
and were culminated as the three-volume book~\cite{SchrijverBook} by Schrijver.

The present paper addresses a different direction in combinatorial optimization, which may go beyond the above paradigm of ``Polyhedra and Efficiency.''
In his 1967 paper~\cite{Edmonds67}, 
Edmonds also suggested an algebraic generalization of the bipartite matching problem, 
which asks to compute the rank of a matrix $A$ given as the following form
\begin{equation}\label{eqn:A}
	A = \sum_{k=1}^m A_k x_k,
\end{equation}
where $A_k$ are matrices over a field $\KK$,
$x_k$ are variables, and the rank of $A$ is considered
in the rational function field $\KK (x_1,x_2,\ldots,x_k)$.
This problem is sometimes called {\em Edmonds' problem}.
A deterministic polynomial-time algorithm 
for Edmonds' problem is not known, 
and is one of the major open problems in theoretical computer science.
The maximum cardinality bipartite matching problem is just the rank computation of (\ref{eqn:A}) such that each $A_k$ has exactly one nonzero entry. Other representatives of polynomially-solvable combinatorial optimization problems, such as linear matroid intersection, nonbipartite matching, and linear matroid matching, are also formulated as Edmonds' problem for special matrices;  see e.g., \cite{GeelenSurvey, Lovasz89}. 
Although min-max theorems in some of these problems were found/can be deduced via such algebraic formulation (see e.g., \cite[Section 16.2b]{SchrijverBook}), 
it is not as well understood as the polyhedral methods.

{\em Noncommutative Edmonds' problem}, introduced by Ivanyos, Qiao, and Subrahmanyam~\cite{IQS15a}, is bringing about new algebraic understanding on combinatorial optimization.
In this setting, 
$x_k$ are regarded as noncommutative variables
$(x_ix_j \neq x_jx_i)$
and $A$ is regarded as a matrix over the noncommutative polynomial ring. 
As the polynomial ring embeds into the rational function field, 
the noncommutative polynomial ring 
embeds into the most generic skew field of fractions, 
called the {\em free skew field}~\cite{Amitsur, Cohn}.
The rank of $A$ over the free skew field 
is called  the {\em noncommutative rank (nc-rank)} of $A$, 
which is denoted by $\ncrank A$.
The nc-rank is an upper bound on the rank.  
The Fortin-Rautenauer formula~\cite{FortinReutenauer04} says that nc-rank is obtained by an optimization problem over the family of vector subspaces.
For the matrices corresponding to bipartite matching, and more generally 
linear matroid intersection,  
the rank and the nc-rank are equal. Consequently, 
the min-max formulas are deduced from the Fortin-Rautenauer formula. 
This derivation is fundamentally different from the LP-based polyhedral methods. Further, the nc-rank can be computed in polynomial time~\cite{GGOW15,HamadaHirai,IQS15b}: 
One is based on a vector-space generalization (the {\em Wong sequences}) of alternating paths~\cite{IQS15b}, and the other two are based on geodesically convex optimization on nonpositively 
curved manifolds~\cite{GGOW15} and non-manifolds~\cite{HamadaHirai} (symmetric spaces and Euclidean buildings).
They are  beyond Euclidean convex optimization algorithms.

Edmonds' problem for (\ref{eqn:A}) naturally extends to a weighted version: 
For an integer weight vector $c = (c_k)$, we consider the following matrix
\begin{equation}\label{eqn:A[c]}
	A[c] := \sum_{k=1}^m A_k t^{c_k} x_k,
\end{equation}
where $t$ is a new variable.
Then, the computation of the degree $\deg \det A[c]$ of the determinant $\det$ of $A[c]$ with respect to $t$
is an algebraic correspondent of weighted maximization in combinatorial optimization. 
Actually, it captures the weighted maximization of the above-mentioned problems. 
In response to this fact and the above development, 
Hirai~\cite{HH_degdet} formulated a noncommutative version of the deg-det computation, by using the {\em Dieudonn\'e determinant} $\Det$, a determinant concept for matrices over a skew field~\cite{Dieudonne}. 
He established a duality formula for the degree of the Dieudonn\'e determinants, gave a simple pseudo-polynomial time algorithm (the {\bf Deg-Det} algorithm) 
to compute $\deg \Det$, and showed the $(\deg \det = \deg \Det)$ relation 
for linear matroid intersection.  
Also, he noted that the {\bf Deg-Det} algorithm can realize
the greedy algorithm for linear matroid 
and the Hungarian method for bipartite matching.
Furue and Hirai~\cite{FurueHirai} showed that it can also deduce the
{\em weight-splitting algorithm}~\cite{Frank1981} for linear matroid intersection, with a new matrix implementation.
Further, Hirai and Ikeda~\cite{HiraiIkeda} showed that $\deg \Det A[c]$ can be computed in strongly polynomial time.  
Their algorithm is based on cost-scaling of $c$ and 
simultaneous Diophantine approximation (the Frank-Tardos method)~\cite{FrankTardos1987}, both of which are standard techniques in combinatorial optimization. 
They also gave a polyhedral interpretation of the deg-Det computation of $A[c]$: 
It can be viewed as linear optimization over an integral polytope determined by $A$. 
This provides a  
broad class of polynomially optimizable integral polytopes, 
including the perfect matching polytope of a bipartite graph and the common base polytope of two linear matroids. It also  
suggests a polyhedral approach to Edmonds' problem.

The present paper pursues this emerging direction of research, 
which might be called {\em algebraic combinatorial optimization}. 
The organization and results of this paper are outlined as follows.
In Section~\ref{sec:nc-rank/deg-Det}, we summarize previous results on nc-rank/deg-Det and their connections to combinatorial optimization.
Particularly, we introduce the {\bf Deg-Det} algorithm, 
which is a simple and flexible iterative algorithm using the nc-rank computation as a subroutine. It is an algorithmic as well as theoretical basis in the subsequent sections. 

From Section~\ref{sec:main}, 
we present the contributions of this paper. 
Extending the previous study~\cite{HH_degdet,HiraiIkeda} on the degree of determinants, 
we consider the maximum degrees of subdeterminants for aiming to capture cardinality-(un)constraint maximization.
We first deal with linear symbolic rational matrices $B = \sum_{k}B_k x_k$, where $B_k$ is a matrix over the rational function field $\KK(t)$.  
Let $\varDelta_\ell(B)$ 
denote the maximum degree of Dieudonn\'e subdeterminants of $B$ of size $\ell$, 
and let $\varDelta_{\rm max}(B) := \max_{\ell} \varDelta_\ell(B)$.
We establish in Theorem~\ref{thm:duality_B} duality formulas for $\varDelta_\ell(B)$ and $\varDelta_{\rm max}(B)$.
The proof is based on the {\bf Deg-Det} algorithm, 
which yields an algorithm ({\bf Deg-SubDet}) 
to compute $\varDelta_{\ell}(B)$ for all $\ell$ at once.
Then, we focus on linear symbolic monomial matrices $A[c]$ in (\ref{eqn:A[c]}), and establish sharpened duality formulas 
for $\varDelta_{\ell}(A[c])$ and $\varDelta_{\rm max}(A[c])$, which link with good characterization and generalize Iwamasa's duality theorem~\cite{Iwamasa_degdet} for $2 \times 2$-partitioned matrices. 
Extending the notion of {\em nc-Newton polytopes} in \cite{HiraiIkeda}, we introduce 
{\em nc-independent set polytopes} ${\cal Q}(A)$ for $A$
and show that $\varDelta_{\rm max}(A[c])$ is interpreted as the optimal value of linear optimization over ${\cal Q}(A)$ with respect to the weight vector $c$. 
This class of polytopes includes the bipartite matching polytope, linear matroid independent set polytope, linear matroid intersection polytope, and 
turns out to include the fractional matching and fractional matroid matching polytopes. 
We provide a strongly polynomial-time specialization of 
the {\bf Deg-Det} algorithm (Hungarian {\bf Deg-Det}).
This algorithm uses neither cost-scaling nor the Frank-Tardos method, and
is viewed as a dual-only-driven algebraic abstraction of the classical Hungarian method. 
With an appropriate primal specification, 
it can realize the original Hungarian method and the 
weight-splitting algorithm for weighted linear matroid intersection. 
Also, it brings a new strongly polynomial-time algorithm 
for the deg-det computation on $2\times 2$-partitioned matrices.
This algorithm is a concise modularized variant of 
Iwamasa's algorithm~\cite{Iwamasa_degdet}, 
where 
the unweighted algorithm in Hirai and Iwamasa~\cite{HiraiIwamasaIPCO} 
is used as a subroutine.

In Section~\ref{sec:applications}, we present applications. 
For the linear matroid matching case, 
the rank and nc-rank are not equal. 
However, Oki and Soma~\cite{OkiSoma} showed that the nc-rank 
is equal to twice the maximum size of fractional matroid matchings.
We show its weighted extension: $\varDelta_{\rm max}(A[c])$ is equal to the maximum weight of fractional matroid matchings, with respect to the weight vector $c$. 
As a consequence, the nc-independent set polytope ${\cal Q}(A)$ 
is the twice expansion of the fractional linear matroid matching polytope. 
Also, our Hungarian {\bf Deg-Det} algorithm brings 
a new strongly polynomial-time algorithm, which is a variation 
of  Gijswijt and Pap~\cite{GP13}, and can care about the bit complexity. 
This result can be applied to the {\em Brascamp-Lieb inequality}~\cite{BrascampLieb}. 
It is known~\cite{BCCT08} that the parameter space of nontrivial BL-inequality forms an interesting rational polytope ({\em BL-polytope}). This polytope is defined by exponentially many inequalities and coincides with a matroid base polytope if the defining matrices are all rank-one~\cite{Barthe1998}.
The computational complexity on BL-polytopes 
is an intriguing open problem in theoretical computer science~\cite{GGOW18}.
Franks, Soma, and Goemans~\cite{FranksSomaGoemans2022} found that the BL-polytope defined by rank-$2$ matrices
is equal to the perfect fractional matroid matching polytope associated with these matrices, 
and showed that the membership problem for the corresponding BL-polytope is in 
NP $\cap$ co-NP. Our result implies that it is indeed in P.

A notable feature of the presented results is that they are described in a building theoretic nature.  
{\em Buildings} (and {\em Euclidean buildings})~\cite{Garrett} 
are a formalism of combinatorics on flags of vector subspaces, and are habitats of 
linearly-representable matroids; see e.g., \cite[Chapter 7]{CoxeterMatroid}.
Particularly, the presented min-max formulas 
for $\varDelta_\ell$ and $\varDelta_{\rm max}$ are described by optimization over Euclidean buildings, and the {\bf Deg-Det} algorithms 
and its time complexity 
are analyzed via arguments having building theory behind.
This will make waves on 
the paradigm of matroid-based algorithm design 
in combinatorial optimization so far. 
Note that any pre-knowledge of the building theory is not required for reading this paper 
as it is described in linear algebra language.

\paragraph{Related work and literature.}
Edmonds' problem is at a crossroad of various fields in mathematical science. In theoretical computer science, it is at the core of deterministic vs.~randomization issue, particularly, 
\emph{polynomial identity testing}, which has been an active research field in theoretical computer science for decades.
If the underlying field is sufficiently large, 
then Edmonds' problem admits a simple randomized polynomial-time algorithm (Lov\'asz~\cite{Lovasz79}). 
However, any deterministic polynomial-time algorithm is not known. Such an algorithm would imply a nontrivial circuit lower bound, which is another long-standing open problem in computational complexity theory~\cite{Kabanets2004}.
Edmonds' problem can also be solved by 
{\em matrix completion} of finding a substitution of $x_k$ 
that attains the maximum rank.
The above randomized polynomial-time algorithm is based on random substitution. Deterministic polynomial-time matrix completions are known for a few classes of matrices; they are related to well-solved combinatorial optimization problems mentioned above; see e.g., \cite{GeelenAlgebraicMatching}.
In engineering, solvability and the degrees of freedom in 
physical dynamical systems are analyzed 
via the rank computation of symbolic matrices, 
where $x_k$ are physical parameters. 
Motivated by this, 
Murota~\cite{MurotaMatrix} 
developed the theory of {\em mixed matrices}, 
a class of symbolic matrices enhanced with matroid algorithms, which is useful for such analysis on dynamical systems (DAEs), electrical networks, and related systems. 
Rigidity of bar-joint frameworks in generic positions 
also falls into the symbolic rank computation.
Several results (e.g., Laman's theorem) as well as long-standing open problems (e.g., 3-dimensional generic rigidity) in rigidity theory are about combinatorial rank characterizations 
of symbolic matrices arising as rigidity matrices; see~\cite{Lovasz89}.

Noncommutative Edmonds' problem further broadens the literature. 
As mentioned, the algorithm (IQS-algorithm) by \cite{IQS15a,IQS15b} for nc-rank is a vector-space generalization of the augmenting path algorithm for bipartite matching. 
This suggests an algebraic generalization, or 
{\em algebraization}, of combinatorial optimization.
It actually led to the study 
of linear-algebraic analogues of several graph quantities by Li et al.~\cite{li2022connections,li2023linearalgebraic}, 
which may share a similar spirit of ours.
Noncommutative Edmonds' problem 
is also closely connected to invariant theory. 
In $\KK = \CC$, 
the nc-full-rank testing of $A$ is equivalent to the {\em null-cone membership} (of asking whether the origin belongs to the orbit closure) of the {\em left-right action} $SL_n(\CC)^2 \ni (g,h) \mapsto (g^{\dagger}A_kh)$.  
The first polynonial-time algorithm of nc-rank, due to Garg, Gurvits, Oliveira, Wigderson~\cite{GGOW15}, is interpreted 
as one minimizing the norm over an orbit closure of this action. 
Subsequently, it turned out to be geodesically convex optimization on nonpositively curved manifolds, {\em symmetric spaces of nonpositive curvature}; 
see e.g., \cite{Allen-Zhu2018,BFGOWW} for further developments. The algorithm by Hamada and Hirai~\cite{HamadaHirai} is based on
discrete convex optimization on a Euclidean building, 
which is the boundary at infinity of the symmetric space; 
see \cite{HiraiHadamard}.
Its objective function has analogous properties ({\em submodularity} and {\em L-convexity}) of {\em discrete convex functions}~\cite{MurotaDCA}, which were earlier developed on $\ZZ^n$ for aiming unified understanding of well-solvable combinatorial optimization problems. 
The min-max theorems presented in this paper are also described by such discrete convex functions, 
{\em L-convex functions on Euclidean buildings}~\cite{HH_degdet}.

\paragraph{Notation.}
Let $\ZZ$, $\QQ$, and $\RR$ denote the sets of integer, rational, 
and real numbers, respectively.
Let $\ZZ_+$, $\QQ_+$, and $\RR_+$ denote their nonnegative subsets.  
For integers $i,j$ with $i \leq j$, let $[i,j] := \{i,i+1,\ldots,j\}$ denote 
the integer interval from $i$ to $j$.
For a positive integer $n$, let $[n] := [1,n] = \{1,2,\ldots,n\}$.
For $i \in [n]$, let $e_i$ denote the $i$-th unit vector in $\ZZ^n$.
For $X \subseteq [n]$,
let ${\bf 1}_X := \sum_{i \in X} e_i \in \{0,1\}^n$ be its characteristic vector.
We denote ${\bf 1}_{[k]}$ simply by ${\bf 1}_k$, and ${\bf 1}_n$ by ${\bf 1}$.

Let $\KK$ be a field.
Let $\KK[t]$ denote the ring of polynomials
with indeterminate $t$, and let $\KK(t)$ denote the field of rational functions with indeterminate $t$.
The (maximum) degree $\deg p$ of a (nonzero) polynomial $p(t) = \sum_{i=0}^d a_i t^i \in \KK[t]$ with $a_d \neq 0$
is defined as $d$, where $\deg 0 := -\infty$.
Accordingly, the degree $\deg p/q$ of a rational $p/q  \in \KK(t)$ with 
$p \in \KK[t]$, $q \in \KK[t] \setminus \{0\}$ is defined as $\deg p - \deg q$.
By a {\em rational matrix} we mean a matrix over the rational function field $\KK(t)$.

Let $\KK(t)^-$ denote the subring of $\KK(t)$ consisting
of rationals $p/q$ with $\deg p/q \leq 0$.
A {\em proper} matrix is a matrix over $\KK(t)^-$.
A {\em biproper} matrix is a nonsingular proper matrix whose inverse is also proper.
Any rational matrix $B = B(t)$  is expanded as a formal power series
\begin{equation}\label{eqn:formal_power_series}
B = B^{(d)} t^{d} + B^{(d-1)} t^{d-1} + \cdots,
\end{equation}
where $B^{(\ell)}$ is a matrix over $\KK$ and
$d \geq \max_{ij} \deg B_{ij}$.
A biproper matrix is precisely a proper matrix $B$ with 
$B^{(0)} = 0$.
A square matrix $S$
is called {\em upper-unitriangular} (resp.~{\em lower-unitriangular})
if $S$ is an upper-triangular matrix (resp.~lower-triangular matrix) and each diagonal element is $1$.
It is clear that proper unitriangular matrices
are biproper.

For a ring $R$ with unity, let $GL_n(R)$ denote the group
of $n \times n$ matrices $S$ over $R$ with $S$ invertible in $R$.
In this paper, we will deal with
\begin{eqnarray*}
	 GL_n(\KK) & =& \mbox{the group of nonsingular $n \times n$ matrices over $\KK$,} \\
 GL_n(\KK(t)) &=& \mbox{the group of nonsingular $n \times n$ matrices over $\KK(t)$,} \\
	GL_n(\KK(t)^-) &=& \mbox{the group of $n \times n$ biproper matrices} \\
 &=& \mbox{the set of proper matrices $B$ with $B^{(0)} \in GL_n(\KK)$}.
\end{eqnarray*}
%

For integer vector $\alpha \in \ZZ^n$,
let $(t^{\alpha}) \in GL_n(\KK(t))$ denote
the diagonal matrix having diagonals $t^{\alpha_1},t^{\alpha_2},\ldots, t^{\alpha_n}$ in order, that is,
\begin{equation}\label{eqn:t^alpha}
(t^{\alpha}) :=
\left( \begin{array}{cccc}
t^{\alpha_1} &                    &  & \\
& t^{\alpha_2}   &  & \\
&                     & \ddots & \\
&                      &           & t^{\alpha_n}
\end{array}
\right).
\end{equation}
We will allow $\alpha$ to be a rational vector; 
in this case, matrices involving $(t^{\alpha})$ are 
over the field of Puiseux series.

In a vector space $V$, we write $U \leqslant V$ if 
$U$ is a vector subspace of $V$, and $U < V$ if 
 $U \leqslant V$ and $U \neq V$. 

\section{Noncommtative rank and determinant}\label{sec:nc-rank/deg-Det}
\subsection{Noncommutative rank}
Here we summarize basic results of noncommutative rank.


\begin{Thm}[\cite{FortinReutenauer04}]\label{thm:FR}
	Let $A$ be a linear symbolic matrix in {\rm (\ref{eqn:A})}.
	Then $\ncrank A$ is equal to the optimal value of the following problem:
	\begin{eqnarray*}
		\mbox{{\rm (FR)}} \quad {\rm Min.} && 2n - r - s  \\
		{\rm s.t.} && \mbox{$S A T$ has an $r \times s$ zero submatrix,} \\
		&& S, T \in GL_n(\KK).
	\end{eqnarray*}
\end{Thm}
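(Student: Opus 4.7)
The plan is to prove the two inequalities separately. The direction $\ncrank A \le 2n-r-s$ is straightforward: since $S,T \in GL_n(\KK)$ remain invertible over the free skew field, $\ncrank A = \ncrank(SAT)$, and for an $n\times n$ matrix over any skew field possessing an $r \times s$ zero submatrix, the column rank is at most $(n-r)+(n-s) = 2n-r-s$. Indeed, the $s$ columns passing through the zero block have nonzero entries only in the $n-r$ rows outside the block (hence contribute at most $n-r$), while the remaining $n-s$ columns contribute at most $n-s$.

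For the reverse direction, I would translate both sides into a common language of \emph{shrunk subspaces}. Call a pair $(V,W)$ of subspaces of $\KK^n$ \emph{$A$-shrinking} if $A_k V \leqslant W$ for every $k$. Given any such pair, choose $T \in GL_n(\KK)$ whose last $\dim V$ columns form a basis of $V$, and $S \in GL_n(\KK)$ whose first $n-\dim W$ rows form a basis of the annihilator of $W$; then every $S A_k T$ has a zero block of size $(n-\dim W)\times \dim V$ in the top-right corner, hence so does $SAT$. Conversely, any $r \times s$ zero block in some $SAT$ returns such a pair via the column span of $T$ restricted to $J$ and the row kernel of $S$ restricted to $I$. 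Minimising $2n-r-s = n - \dim V + \dim W$ over feasible data thus coincides with
\[
    n - c^*(A), \qquad c^*(A) \;:=\; \max_{V \leqslant \KK^n} \Bigl(\dim V - \dim \sum_{k=1}^m A_k V\Bigr),
\]
where the optimal choice is $W = \sum_k A_k V$. Hence the theorem is equivalent to the identity $\ncrank A = n - c^*(A)$.

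The inequality $\ncrank A \leq n - c^*(A)$ is a direct restatement of the easy direction. The substantive content is the opposite bound $\ncrank A \ge n - c^*(A)$, i.e.\ the existence of a shrunk subspace attaining the minimum. I would invoke Cohn's inner-rank description: $\ncrank A$ equals the minimum $\rho$ for which a factorization $A = PQ$ exists over the free skew field with $P$ of shape $n \times \rho$ and $Q$ of shape $\rho \times n$. Starting from such a minimal factorization, the universality of the free skew field supplies a matrix specialization $x_k \mapsto M_k \in M_N(\KK)$ for some $N$ under which the evaluation of $A$ has rank exactly $\rho N$; a Wong-sequence or orbit-closure argument then descends this certificate to a shrunk subspace $V \leqslant \KK^n$ realising $\dim V - \dim \sum_k A_k V \ge n-\rho$.

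The principal obstacle is precisely this descent step: producing a witness \emph{over the ground field $\KK$} from data living in the free skew field is a non-trivial transfer and is the technical heart of Fortin--Reutenauer's argument. Alternative routes that sidestep Cohn's machinery include the operator-scaling algorithm of Garg--Gurvits--Oliveira--Wigderson, the Wong-sequence algorithm of Ivanyos--Qiao--Subrahmanyam, and the convex-optimisation approach of Hamada--Hirai; each of these terminates with an explicit shrunk subspace certifying the min-max equality, thereby yielding an independent proof of the hard direction.
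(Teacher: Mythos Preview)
The paper does not prove this theorem: it is quoted as a result of Fortin--Reutenauer and used as a black box. The only sentence following the statement, ``By $\rank A = \rank SAT \leq 2n - r - s$, nc-rank is an upper bound of rank,'' is not a proof of the theorem but rather an application of its easy direction to deduce $\rank A \le \ncrank A$. So there is no paper proof to compare against.

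That said, your outline is accurate. The easy direction is correct as written. Your reformulation via $A$-shrinking pairs $(V,W)$ is exactly the MVSP formulation the paper introduces immediately after Theorem~\ref{thm:ncrank_dual}, and the reduction of the FR minimum to $n - c^*(A)$ is standard. You are also right that the hard direction---producing a shrunk subspace over $\KK$ certifying the nc-rank---is the nontrivial content; your sketch does not actually carry this out but correctly points to Cohn's inner-rank characterization and to the algorithmic proofs (IQS, GGOW, Hamada--Hirai) as alternative routes. One small caveat: the operator-scaling algorithm of Garg--Gurvits--Oliveira--Wigderson computes the optimal \emph{value} but does not by itself output a shrunk subspace, as the paper notes just after Theorem~\ref{thm:ncrank_dual}; for an explicit witness over $\KK$ you need IQS, Hamada--Hirai, or the modification by Franks--Soma--Goemans.
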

By $\rank A = \rank SAT \leq 2n - r - s$, nc-rank is an upper bound of rank:
\[
\rank A \leq \ncrank A.
\]
If $\ncrank A = n$, we say that $A$ is {\em nc-nonsingular}.
\begin{Thm}[\cite{FranksSomaGoemans2022,HamadaHirai,IQS15a,IQS15b}]\label{thm:ncrank_dual}
	An optimal solution $S,T$ in {\rm FR} can be obtained in polynomial time.
\end{Thm}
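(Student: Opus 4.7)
The plan is to decouple the theorem into two tasks: a subspace reformulation of the Fortin-Reutenauer certificate, and an invocation of one of the cited polynomial-time nc-rank algorithms to produce that certificate. I would begin by verifying the correspondence that an $r \times s$ zero submatrix of $SAT$ is equivalent to the existence of subspaces $U, V \leqslant \KK^n$ with $\dim U = s$, $\dim V = n - r$, and $A_k U \subseteq V$ for every $k \in [m]$. Given $S, T$ and index sets $R, C$ of the zero block, put $U := T(\vecspan\{e_j : j \in C\})$ and $V := S^{-1}(\vecspan\{e_i : i \in [n] \setminus R\})$; conversely, given such $U, V$, any $T \in GL_n(\KK)$ sending $\vecspan\{e_1,\ldots,e_s\}$ onto $U$ and any $S \in GL_n(\KK)$ sending $V$ onto $\vecspan\{e_{r+1},\ldots,e_n\}$ do the job, and these exist by basis extension. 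Under this identification $2n - r - s = n - (\dim U - \dim V)$, so by Theorem~\ref{thm:FR} it suffices to find $U, V$ with $A_k U \subseteq V$ maximizing $\dim U - \dim V$.

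For the polynomial-time step I would invoke the IQS algorithm~\cite{IQS15a,IQS15b}, which generalizes the alternating-path method to vector spaces via Wong sequences and, upon termination, returns a maximal shrunk subspace $W \leqslant \KK^n$ satisfying $\dim \sum_k A_k W = \dim W - (n - \ncrank A)$. Setting $U := W$ and $V := \sum_k A_k W$ yields an optimal pair, and the basis-extension packaging above converts it to the required $(S, T)$. The Hamada-Hirai algorithm~\cite{HamadaHirai}, which minimizes an L-convex function on a Euclidean building, is more in the spirit of this paper, and a minimizing flag there directly encodes $(U, V)$; the Franks-Soma-Goemans algorithm~\cite{FranksSomaGoemans2022} provides a third, operator-scaling-based route over $\KK = \CC$ that outputs an approximate scaling from which an exact certificate can be rounded out.

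The main obstacle I foresee is in this polynomial-time step: ensuring that the output is delivered as an explicit subspace pair with polynomial bit complexity over $\KK$. The IQS algorithm proceeds through a long chain of basis manipulations that must be tracked carefully to keep entry sizes controlled; the operator-scaling route produces only approximate scalings, so an exact integer certificate must be extracted by careful rounding and verification; and the building-theoretic method requires a geodesic descent whose intermediate flags must be represented compactly. All three difficulties are resolved in the cited references, so the theorem follows by combining any one of their outputs with the elementary linear-algebraic packaging above.
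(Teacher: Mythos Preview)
Your proposal is essentially correct, but there is no proof in the paper to compare against: the theorem is stated as a cited result, followed only by a discussion paragraph noting which of the three algorithms applies under which field assumptions (Franks--Soma--Goemans over $\QQ$ via operator scaling; Hamada--Hirai over arbitrary fields but without a polynomial bit bound when $\KK=\QQ$; IQS as the only algorithm working with no restriction on $\KK$). Your outline supplies exactly the content behind those citations, and your bit-complexity caveats match the paper's discussion almost verbatim.

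One cosmetic difference worth noting: you reformulate the FR certificate via \emph{shrunk subspaces} $A_kU\subseteq V$, whereas the paper's MVSP uses the bilinear-form condition $A_k(U,V)=\{0\}$ with $A_k$ viewed as $x^\top A_k y$. The two are dual to each other (your $V$ is the paper's $U^{\bot}$, essentially), so this is only a change of coordinates. Also, the Franks--Soma--Goemans algorithm is presented in the paper as working over $\KK=\QQ$ rather than $\CC$, but this does not affect your argument.
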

For $\KK = \QQ$,
the algorithm by Garg et al.~\cite{GGOW15} can compute the optimal value of FR
but cannot obtain an optimal solution $S,T$.
Recently, Franks, Soma, and Goemans~\cite{FranksSomaGoemans2022}
modified this algorithm to obtain an optimal solution $S,T$.
The algorithm by Hamada and Hirai~\cite{HamadaHirai}
obtains optimal $S,T$ even for a small finite field but
does not guarantee polynomial bit complexity in the intermediate computation when $\KK = \QQ$.
Currently, the algorithm by Ivanyos, Qiao, and Subrahmanyam (the {\em IQS-algorithm})~\cite{IQS15a,IQS15b} is the only polynomial-time algorithm to compute an optimal solution $S,T$ with no restriction on $\KK$.

The problem FR is also written as the following vector-subspace optimization (called the {\em Maximum Vanishing Subspace Problem} in \cite{HamadaHirai}):
\begin{eqnarray*}
	\mbox{{\rm (MVSP)}} \quad {\rm Min.} && 2n - \dim U - \dim V  \\
	{\rm s.t.} && A_k(U,V)  = \{0\} \quad (k \in [m]),\\
	&& U,V \leqslant \KK^n,
\end{eqnarray*}
where $A_k$ is regarded as a bilinear form $x,y \mapsto x^{\top}A_ky$.
It should be noted that this problem already appeared in \cite{Lovasz89}.
To see the equivalence between FR and MVSP,
consider nonsingular matrices $S$ and $T$
including bases of $U$ and $V$, respectively.

As observed in \cite{HamadaHirai},
the problem MVSP can be viewed as submodular function minimization on
the modular lattice of vector subspaces.
Indeed, if $U,V$ and $U',V'$ are both feasible to MVSP,
then so are $U \cap U', V + V'$ and $U + U', V \cap V'$.
From $\dim U + \dim U' =  \dim (U \cap U') + \dim (U + U')$,
if $U,V$ and $U',V'$ are both optimal, then
so are  $U \cap U', V + V'$ and $U + U', V \cap V'$.
In particular, there is a unique optimal solution $U,V$
such that $\dim U$ is maximum (and $\dim V$ is minimum).
Such an optimal solution is said to be {\em dominant}.
Accordingly, an optimal solution $S,T$ in FR
is said to be {\em dominant} if $r$ is maximum (and $s$ is minimum).

The IQS-algorithm~\cite{IQS15a,IQS15b} naturally obtains
a dominant optimal solution in (FR).
The algorithm by Franks, Soma, and Goemans~\cite{FranksSomaGoemans2022}
also obtains a dominant optimal solution.

The nc-rank is represented as the ordinary rank of 
an expanded matrix called a {\em blow-up}. 
For a positive integer $d$, 
the {\em $d$-th blow-up} $A^{\{d\}}$ of $A$ is a linear symbolic matrix defined by
\begin{equation}\label{eqn:blow-up}
	A^{\{d\}} := \sum_{k=1}^m A_k \otimes X_k,
\end{equation}
where $\otimes$ denotes the Kronecker product and
$X_k = (x_{k,ij})$ is a $d \times d$ matrix of variable entries $x_{k,ij}$.
We consider the (ordinary) rank of $A^{\{d\}}$ over 
the rational function field $\KK(\{x_{k,ij}\}_{k \in [m],i,j \in [d]})$. 
\begin{Thm}[\cite{IQS15a}]\label{thm:blow-up}
$\displaystyle \ncrank A= \max_{d=1,2,\ldots}\frac{1}{d} \rank A^{\{d\}}$.
\end{Thm}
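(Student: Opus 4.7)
The plan is to establish the two inequalities of the claimed identity separately. For the bound $\frac{1}{d} \rank A^{\{d\}} \leq \ncrank A$, which will hold for every $d \geq 1$, I would invoke the Fortin--Reutenauer characterization (Theorem~\ref{thm:FR}). Let $\rho := \ncrank A$ and pick $S, T \in GL_n(\KK)$ so that $SAT$ has an $r \times s$ zero submatrix with $r + s = 2n - \rho$. Then the matrix
\begin{equation*}
(S \otimes I_d)\, A^{\{d\}}\, (T \otimes I_d) \;=\; \sum_{k=1}^m (S A_k T) \otimes X_k
\end{equation*}
carries an $rd \times sd$ zero submatrix. Since $S \otimes I_d, T \otimes I_d \in GL_{nd}(\KK)$ preserve rank, and an $nd \times nd$ matrix with such a zero block has rank at most $2nd - (r+s)d = \rho d$, I would conclude $\rank A^{\{d\}} \leq d \cdot \ncrank A$, so that $\max_d \frac{1}{d} \rank A^{\{d\}} \leq \ncrank A$.

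For the matching lower bound, it suffices to exhibit some $d_0$ realizing $\rank A^{\{d_0\}} \geq d_0 \cdot \ncrank A$. The strategy is to invoke the \emph{inner rank} description of nc-rank: over the free skew field, $\ncrank A = \rho$ if and only if $A$ admits a factorization $A = BC$ with $B$ of size $n \times \rho$ and $C$ of size $\rho \times n$ over the free skew field, and $\rho$ is minimum with this property. By Cohn's construction of the free skew field as the universal field of fractions of $\KK\langle x_1, \ldots, x_m \rangle$, each entry of $B$ and $C$ admits a rational-function realization in generic $d_0 \times d_0$ matrices for a sufficiently large $d_0$. Substituting $x_k \mapsto X_k$ in this realization produces a factorization $A^{\{d_0\}} = \tilde B \tilde C$ with outer dimensions $nd_0 \times \rho d_0$ and $\rho d_0 \times nd_0$; the genericity of $X_k$ then forces $\tilde B$ to have full column rank and $\tilde C$ to have full row rank, giving $\rank A^{\{d_0\}} \geq \rho d_0$.

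The genuine obstacle lies in this last step: lifting the abstract free-skew-field factorization to an honest specialization on $d_0 \times d_0$ generic matrices without rank collapse, together with a quantitative bound on $d_0$. This is the content of Cohn's universal-field-of-fractions machinery, and admits an alternative derivation through Derksen--Schofield's theory of semi-invariants of quivers, which additionally supplies a polynomial bound $d_0 = O(n)$. Granting this, combining both bounds yields $\ncrank A = \max_{d} \frac{1}{d} \rank A^{\{d\}}$, with the maximum attained at some $d_0$ bounded polynomially in $n$.
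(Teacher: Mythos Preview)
The paper does not supply its own proof of this theorem; it is simply cited from \cite{IQS15a} (with the bound on $d$ attributed to \cite{DerksenMakam2017}), so there is no in-paper argument to compare against. I will therefore comment only on the soundness of your sketch.

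Your upper bound is correct and is the standard one-line argument: tensoring the Fortin--Reutenauer witnesses $S,T$ with $I_d$ produces an $rd \times sd$ zero block in a matrix equivalent to $A^{\{d\}}$, bounding its rank by $\rho d$.

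For the lower bound, your inner-rank approach is a legitimate route, but the step ``genericity of $X_k$ forces $\tilde B$ to have full column rank'' deserves more care than you give it. What you need is that a matrix with full column rank over the free skew field retains full column rank after generic matrix specialization; equivalently, that a $\rho\times\rho$ submatrix $B'$ of $B$ that is invertible over the free skew field has an invertible specialization $\widetilde{B'}$. This follows from Amitsur's theorem on rational identities (no nonzero element of the free skew field vanishes identically on generic $d\times d$ matrices for $d$ large), applied to the entries of $(B')^{-1}$: once both $B'$ and $(B')^{-1}$ specialize, the identity $B'(B')^{-1}=I$ is preserved and $\widetilde{B'}$ is invertible. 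You correctly flag this as the substantive gap and point to Cohn's machinery; naming Amitsur's theorem explicitly would make the sketch self-contained. The quantitative bound $d_0 \leq n-1$ is indeed a separate (and deeper) input, due to Derksen--Makam rather than Derksen--Schofield.
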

It is known \cite{DerksenMakam2017} that the maximum in (1) is attained for any $d \ge n-1$.

\subsection{Degree of Dieudonn\'e determinant}\label{subsec:DegDet}
Next we consider the degree of the determinant of the following matrix $B = B(t)$:
\begin{equation}\label{eqn:B}
	B = \sum_{k=1}^m B_k x_k,
\end{equation}
where $x_k$ are variables as above and
$B_k = B_k(t)$ are $n \times n$ matrices 
over rational function field $\KK(t)$ with indeterminate $t$. 
In the commutative case, $\deg \det B$ is the maximum degree, with respect to $t$, 
of rational function $\det B \in \KK(x_1,x_2,\ldots,x_n)(t)$.

The noncommutative formulation is as follows; see \cite{HH_degdet,HiraiIkeda,Oki} for details.
Regard the matrix $B$ as a matrix over the rational function skew field
$\FF(t) =\{p/q \mid p \in \FF[t], q \in \FF[t] \setminus \{0\} \}$,
where $\FF := \KK (\langle x_1,\ldots,x_m \rangle)$ is the free skew field,
$\FF[t]$ is the {\em skew polynomial ring} over $\FF$,
and the degree of an element in $\FF(t)$ is defined similarly.
See \cite[Chapter 7]{Cohn_Algebra} for skew polynomial ring and their fractions.
Any (nonsingular) matrix $B$ over $\FF(t)$ is
written as $B = LDPU$,
where $L$ and $U$ are lower-unitriangular and upper-unitriangular matrices, respectively,
$P$ is a permutation matrix, and $D$ is a diagonal matrix.
This decomposition is called the {\em Bruhat normal form};
see \cite[Theorem 9.2.2]{Cohn_Algebra} and
also Lemma~\ref{lem:Bruhat} below.
The {\em Dieudonn\'e determinant} $\Det B$ of $B$ is defined
as the product of the sign of $P$ and all diagonals of $D$ modulo
the commutator subgroup of the multiplicative group
$\FF(t) \setminus \{0\}$~\cite{Dieudonne}; see \cite[Section 9.2]{Cohn_Algebra}.
We let $\Det B := 0$ if $B$ is singular.
Although
$\Det B$ is no longer an element of $\FF(t)$,
its degree $\deg \Det B \in \ZZ \cup \{-\infty\}$ is well-defined
(as the degree of any commutator is zero).

We distinguish $\det B$ and $\ncdet B$: The former stands 
for the ordinary determinant of $B$ when $B$ is viewed as a matrix over $\KK(x_1,x_2,\ldots,x_n,t)$,
and the latter stands for the Dieudonn\'e determinant of $B$ 
in the above formulation.

As in the ordinary determinant,
it holds $\Det AB = \Det A \Det B$; see~\cite[Chapter 11, Theorem 2.6]{Cohn_Algebra}. Accordingly, it holds
\begin{equation}\label{eqn:degDetAB}
\deg \Det AB = \deg \Det A + \deg \Det B.
\end{equation}
The following property, obviously holding for $\deg \det$, also holds for $\deg \Det$; see \cite{HH_degdet}.
\begin{equation}\label{eqn:degDetB<=0}
\deg B_{ij} \leq 0 \ (i,j \in [n])\ \Rightarrow \ \deg \Det B \leq 0.
\end{equation}

\subsubsection{Duality theorem}
As an extension of the Fortin-Reutenauer theorem (Theorem~\ref{thm:FR}),
Hirai~\cite{HH_degdet} established the following duality theorem for $\deg \Det$:
\begin{Thm}[{\cite{HH_degdet}}]\label{thm:duality_B}
	Let $B = B(t)$ be a matrix in {\rm (\ref{eqn:B})}.
	Then $\deg \Det B$ is equal to the optimal value
	of the following problem (Maximum Vanishing subModule Problem):
	\begin{eqnarray*}
		\mbox{{\rm (MVMP)}} \quad {\rm Min.} && - \deg \det P - \deg \det Q \\
		{\rm s.t.} && \deg (PB_k Q)_{ij} \leq 0 \quad (i,j \in [n], k \in [m]), \\
		&& P,Q \in GL_n (\KK(t)).
	\end{eqnarray*}
\end{Thm}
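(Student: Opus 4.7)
The plan is to prove weak duality by multiplicativity of the Dieudonn\'e determinant, and strong duality by an iterative reduction---a precursor to the {\bf Deg-Det} algorithm---that applies the Fortin--Reutenauer theorem (Theorem~\ref{thm:FR}) to successive leading-coefficient matrices.

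\emph{Weak duality.} Let $(P,Q)$ be feasible, so every entry of every $PB_kQ$ has degree at most $0$. Then $PBQ = \sum_k (PB_kQ)\,x_k$ is a linear form in the $x_k$ whose coefficient matrices all lie in $(\KK(t)^-)^{n\times n}$, so each entry of $PBQ$ has degree at most $0$. Property~(\ref{eqn:degDetB<=0}) yields $\deg \Det(PBQ) \leq 0$; since $\KK(t)$ is commutative, $\Det P = \det P$ and $\Det Q = \det Q$, so multiplicativity~(\ref{eqn:degDetAB}) gives
\[
0 \ \geq\ \deg \Det(PBQ) \ =\ \deg\det P + \deg \Det B + \deg\det Q,
\]
hence $\deg \Det B \leq -\deg\det P - \deg\det Q$.

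\emph{Strong duality via iteration.} We construct an optimal $(P^\star, Q^\star)$ iteratively. Initialize $P_0 := (t^{-d})$ with $d := \max\{0,\max_{i,j,k}\deg(B_k)_{ij}\}$ and $Q_0 := I$, so that $P_0 B Q_0$ is proper. At each step, with current feasible $(P,Q)$ and $B' := PBQ$ proper, inspect the linear symbolic matrix $(B')^{(0)} = \sum_k (PB_kQ)^{(0)}\, x_k$ over $\KK$ via Theorem~\ref{thm:FR}. If $(B')^{(0)}$ is nc-nonsingular, the key lemma below gives $\deg \Det B' = 0$, and multiplicativity yields $-\deg\det P - \deg\det Q = \deg \Det B$, matching the weak-duality bound; done. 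Otherwise Theorem~\ref{thm:FR} furnishes $S,T \in GL_n(\KK)$ and index sets $R, J \subseteq [n]$ with $|R|+|J| > n$ making $(S(B')^{(0)}T)_{i,j} = 0$ for $(i,j) \in R \times J$. Then $(SB'T)_{i,j}$ has degree $\leq -1$ on $R \times J$ and $\leq 0$ elsewhere, so scaling by $(t^\alpha)$ on the left with $\alpha_i = 1$ for $i \in R$ (else $0$), and by $(t^\beta)$ on the right with $\beta_j = -1$ for $j \notin J$ (else $0$), keeps the product proper while strictly decreasing the objective by $|R|+|J|-n \geq 1$. Updating $P \leftarrow (t^\alpha) S P$, $Q \leftarrow QT(t^\beta)$ and iterating, termination follows because the integer-valued objective is bounded below by $\deg \Det B$ via weak duality.

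\emph{Key lemma and main obstacle.} The technical heart is the claim: \emph{if $B'$ is proper and $(B')^{(0)}$ is nc-nonsingular over $\KK$, then $\deg \Det B' = 0$.} My plan is to prove it by factoring over the free skew field $\FF(t)$: nc-nonsingularity of $(B')^{(0)}$ means it is invertible as a matrix over $\FF = \KK(\langle x_1,\ldots,x_m\rangle)$, so writing $B' = (B')^{(0)}\bigl(I + ((B')^{(0)})^{-1} R\bigr)$ with $R := B' - (B')^{(0)}$ having all entries of degree $\leq -1$ exhibits the second factor as a biproper matrix over $\FF(t)$ (identity plus lower-order corrections), whose Dieudonn\'e determinant has degree $0$; combined with $\deg \Det (B')^{(0)} = 0$ (as $(B')^{(0)}$ is $t$-free), multiplicativity gives $\deg \Det B' = 0$. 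Alternatively, one may reduce to the commutative case via the blow-up $(B')^{\{d\}}$ (Theorem~\ref{thm:blow-up}): its leading $t^0$-coefficient coincides with the ordinary-nonsingular blow-up of $(B')^{(0)}$, forcing $\deg \det (B')^{\{d\}} = 0$, and a degree version of the blow-up identity $\deg \Det B' = \tfrac{1}{d}\deg \det (B')^{\{d\}}$ transfers this to $B'$. Establishing the biproperness/degree machinery over $\FF(t)$---or the degree blow-up identity---is the main technical step; the rest of the proof is combinatorial bookkeeping around the Fortin--Reutenauer reduction.
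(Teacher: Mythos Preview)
Your proposal is correct and follows essentially the same route as the paper: weak duality is exactly~(\ref{eqn:weakduality_degDet}), and your iterative reduction is the {\bf Deg-Det} algorithm, with your ``key lemma'' being the ``if'' direction of Lemma~\ref{lem:optimality}~(1) (your factorization $B' = (B')^{(0)}\bigl(I + ((B')^{(0)})^{-1}R\bigr)$ is a clean way to prove it, using that the second factor is biproper over $\FF(t)^-$ and hence has $\deg\Det = 0$ by~(\ref{eqn:degDetB<=0}) applied to it and its inverse).

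One small omission: your termination argument (``bounded below by $\deg\Det B$'') only handles the case $\deg\Det B > -\infty$. When $B$ is nc-singular the iteration need not terminate, but then the objective of MVMP decreases without bound, so its optimal value is $-\infty = \deg\Det B$; the paper notes this explicitly in its description of {\bf Deg-Det}. With that sentence added, your proof is complete.
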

The weak duality is seen from (\ref{eqn:degDetAB}) and (\ref{eqn:degDetB<=0}) as
\begin{equation}\label{eqn:weakduality_degDet}
0 \geq \deg \Det PBQ = \deg \Det P + \deg \Det B + \deg \Det Q,
\end{equation}
where $\Det P = \det P$ and $\Det Q = \det Q$.
The same weak duality holds for $\deg \det$.
Hence the theorem implies that $\deg \Det$ is an upper bound of $\deg \det$:
\begin{equation*}
	\deg \det B \leq \deg \Det B.
\end{equation*}
We remark that this upper bounding of $\deg \det$ already appeared in~\cite{Murota95_SICOMP}.

%

%
%
%

%

The feasibility condition of MVMP is rephrased 
as the condition that $PBQ$ is expanded 
as $PBQ = (PBQ)^{(0)} +  (PBQ)^{(-1)}t^{-1} + \cdots.$
The leading term $(PBQ)^{(0)} = \sum_{k} (P B_kQ)^{(0)} x_k$, viewed as a linear symbolic matrix over $\KK$,
plays particularly important roles.
\begin{Lem}[{\cite{HH_degdet}}]\label{lem:optimality}
Let $P,Q$ be a feasible solution for {\rm MVMP}.
	\begin{itemize}
		\item[{\rm (1)}] $P,Q$ is optimal  if and only
		if $\ncrank (PBQ)^{(0)} = n$.
		\item[{\rm (2)}]
		If $\rank (PBQ)^{(0)} = n$, then $\deg \det B = \deg \Det B = - \deg \det P- \deg \det Q$.
	\end{itemize}
\end{Lem}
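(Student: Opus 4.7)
Throughout, put $C := PBQ$, so feasibility of $(P, Q)$ amounts to $C$ being proper. The weak duality chain (\ref{eqn:weakduality_degDet}) reads $-\deg \det P - \deg \det Q \geq \deg \Det B$ with equality precisely when $\deg \Det C = 0$. Thus both parts of the lemma reduce to statements about the proper matrix $C$ and its leading coefficient $C^{(0)}$.

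Part (2) is direct. Since $\rank C^{(0)} = n$ gives $\det C^{(0)} \neq 0$, and all entries of $C$ have $t$-degree $\leq 0$, the commutative determinant $\det C$ has $t$-leading coefficient $\det C^{(0)} \neq 0$, whence $\deg \det C = 0$. Multiplicativity of the ordinary determinant then yields $\deg \det B = -\deg \det P - \deg \det Q$. Sandwiching with $\deg \det B \leq \deg \Det B \leq -\deg \det P - \deg \det Q$ (using the inequality $\deg \det \leq \deg \Det$ recalled in the excerpt together with weak duality) forces all three quantities to coincide.

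For the forward direction of (1), I argue by contrapositive. If $\ncrank C^{(0)} < n$, Theorem~\ref{thm:FR} supplies $S, T \in GL_n(\KK)$ and index sets $R, D \subseteq [n]$ with $|R| + |D| > n$ such that $(S C^{(0)} T)_{ij} = 0$ on $R \times D$. Thus in the proper matrix $SCT$ the entries on $R \times D$ have $t$-degree $\leq -1$. Set $\alpha_i := 1$ if $i \in R$ and $0$ otherwise, and $\gamma_j := -1$ if $j \notin D$ and $0$ otherwise. A four-case analysis on the block pattern of $(R, D)$ shows that $(t^\alpha) S C T (t^\gamma)$ is still proper, while $\deg \det((t^\alpha)(t^\gamma)) = |R| + |D| - n > 0$. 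Hence $P' := (t^\alpha) S P$ and $Q' := Q T (t^\gamma)$ is feasible with strictly smaller MVMP objective, contradicting optimality.

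For the reverse direction of (1), assume $\ncrank C^{(0)} = n$; by the opening paragraph it suffices to prove $\deg \Det C = 0$. By Theorem~\ref{thm:blow-up} together with the bound $d \geq n-1$ recalled from \cite{DerksenMakam2017}, for some $d$ the blow-up $(C^{(0)})^{\{d\}}$ has full rank $nd$, so $\det (C^{(0)})^{\{d\}} \neq 0$ in $\KK(\{x_{k,ij}\})$. Since $C^{\{d\}}$ has entries of $t$-degree $\leq 0$ with $t$-leading coefficient $(C^{(0)})^{\{d\}}$, it follows that $\deg \det C^{\{d\}} = 0$. The known identity $\deg \det C^{\{d\}} = d \cdot \deg \Det C$ for nc-nonsingular $C$ at sufficiently large $d$---linking the classical determinant of blow-ups to the Dieudonn\'e determinant---then yields $\deg \Det C = 0$. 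Establishing this last identity is the principal technical ingredient and the main obstacle in the argument; once granted, the whole proof is essentially linear-algebraic.
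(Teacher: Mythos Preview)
The paper does not give its own proof of this lemma; it is quoted verbatim from \cite{HH_degdet}. So there is no in-paper argument to compare against, and the question is simply whether your proof stands on its own.

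Your treatment of (2) and of the forward direction of (1) is correct and is exactly the mechanism behind the \textbf{Deg-Det} update step described just after the lemma.

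For the reverse direction of (1), your blow-up route is valid but needlessly indirect, and you yourself flag the identity $\deg \det C^{\{d\}} = d\,\deg\Det C$ as an unproved ``main obstacle.'' Two remarks. First, that identity is not circular: once you have (2), Proposition~\ref{prop:blow-up_degDet} follows from (2), Theorem~\ref{thm:blow-up}, and the separate fact $\deg\Det B^{\{d\}} = d\,\deg\Det B$; applying it to $C$ with the particular $d$ for which $\rank (C^{(0)})^{\{d\}} = nd$ gives $\deg\Det C = 0$. Second, and more to the point, the detour through blow-ups is unnecessary. Nc-nonsingularity of $C^{(0)}$ means precisely that $C^{(0)}$ is invertible over the free skew field $\FF$. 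Writing
\[
C = C^{(0)}\bigl(I + t^{-1}(C^{(0)})^{-1}C^{(-1)} + \cdots\bigr) =: C^{(0)}(I + t^{-1}E),
\]
with $E$ proper over $\FF(t)^-$, one has $\deg\Det C^{(0)} = 0$ (all entries lie in $\FF$) and $\deg\Det(I+t^{-1}E)=0$ (both $I+t^{-1}E$ and its formal inverse $\sum_{k\ge 0}(-t^{-1}E)^k$ are proper, so (\ref{eqn:degDetB<=0}) bounds both degrees by $0$, and they sum to $0$ by (\ref{eqn:degDetAB})). Multiplicativity then gives $\deg\Det C = 0$ directly. This is the intended argument in \cite{HH_degdet} and removes the black box entirely.
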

The second property (2) is useful for showing $\deg \det = \deg \Det$ 
from $\rank = \ncrank$.

As an analogue of Theorem~\ref{thm:blow-up}, it holds:  
\begin{Prop}[{\cite[Lemma 4.5]{HH_degdet}}]\label{prop:blow-up_degDet}
$\displaystyle \deg \Det B = \frac{1}{d}\max_{d=1,2,\ldots} \deg \det B^{\{d\}}$.
\end{Prop}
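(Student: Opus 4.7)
The plan is to prove the two inequalities $\deg\Det B \ge \frac{1}{d}\deg\det B^{\{d\}}$ for every $d$ and $\deg\Det B \le \frac{1}{d_0}\deg\det B^{\{d_0\}}$ for some $d_0$ (in fact for every $d_0 \ge n-1$). This structure mirrors the proof of Theorem~\ref{thm:blow-up}: the first inequality uses MVMP as an upper-bounding duality, while the second constructs a matching certificate from an optimal MVMP solution via the nc-rank blow-up identity.

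For the upper-bound direction, I would fix any feasible $(P,Q)$ of MVMP and use the Kronecker mixed-product identity $(A\otimes B)(C\otimes D)=AC\otimes BD$ to obtain
\[
(P\otimes I_d)\, B^{\{d\}}\, (Q\otimes I_d) \;=\; \sum_{k=1}^m (PB_kQ)\otimes X_k.
\]
Every entry of the right-hand side is a $\KK(\{x_{k,ij}\})$-linear combination of entries of the $PB_kQ$'s, and by feasibility each such entry has $t$-degree at most $0$. Therefore the ordinary determinant of the right-hand side has $t$-degree at most $0$. Using multiplicativity of $\det$ together with $\det(P\otimes I_d)=(\det P)^d$ and likewise for $Q$, this gives
\[
0 \;\ge\; d\deg\det P + \deg\det B^{\{d\}} + d\deg\det Q,
\]
i.e.\ $\frac{1}{d}\deg\det B^{\{d\}} \le -\deg\det P - \deg\det Q$. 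Minimizing the right-hand side over feasible $(P,Q)$ and invoking Theorem~\ref{thm:duality_B} yields $\frac{1}{d}\deg\det B^{\{d\}} \le \deg\Det B$.

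For the matching direction, I would take an \emph{optimal} $(P,Q)$ for MVMP. By Lemma~\ref{lem:optimality}(1), the linear symbolic matrix $(PBQ)^{(0)}$ is nc-nonsingular, so by Theorem~\ref{thm:blow-up} there exists $d$ (any $d\ge n-1$ suffices) with $\rank((PBQ)^{(0)})^{\{d\}} = nd$. Expanding $(PBQ)^{\{d\}} = \sum_k (PB_kQ)\otimes X_k$ as a formal power series in $t$, its $t^0$-coefficient is exactly $((PBQ)^{(0)})^{\{d\}}$, which is nonsingular over $\KK(\{x_{k,ij}\})$. Hence $\det(PBQ)^{\{d\}}$ has a nonzero $t^0$-coefficient, so $\deg\det(PBQ)^{\{d\}} = 0$. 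Reusing the Kronecker identity and multiplicativity of $\det$ yields
\[
0 \;=\; d\deg\det P + \deg\det B^{\{d\}} + d\deg\det Q,
\]
whence $\frac{1}{d}\deg\det B^{\{d\}} = -\deg\det P - \deg\det Q = \deg\Det B$.

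The only step that requires any care is the elementary observation that a proper matrix whose leading $t^0$-coefficient matrix is nonsingular over the base field has determinant of $t$-degree exactly $0$, which is a routine power-series computation. Beyond this, the argument is a direct combination of the MVMP duality (Theorem~\ref{thm:duality_B}), the optimality criterion (Lemma~\ref{lem:optimality}), and the blow-up formula for nc-rank (Theorem~\ref{thm:blow-up}), so I do not anticipate a substantial obstacle.
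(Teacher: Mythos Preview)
Your argument is correct and essentially matches the paper's proof. The paper's one-line sketch invokes the identity $\deg\Det B^{\{d\}}=d\,\deg\Det B$ together with Theorem~\ref{thm:blow-up} and Lemma~\ref{lem:optimality}(2); you unpack exactly these ingredients, doing the Kronecker-product computation $(P\otimes I_d)B^{\{d\}}(Q\otimes I_d)=\sum_k(PB_kQ)\otimes X_k$ by hand and using Lemma~\ref{lem:optimality}(1) plus the elementary observation about proper matrices with nonsingular leading term in place of citing Lemma~\ref{lem:optimality}(2) directly.
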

This property follows from combining Theorem~\ref{thm:blow-up}, Lemma~\ref{lem:optimality}~(2), and $\deg \Det B^{\{d\}} = d \deg \Det B$. Again the maximum is attained by any $d \geq n-1$.

%

\subsubsection{The Deg-Det algorithm}

The {\bf Deg-Det} algorithm~\cite{HH_degdet} is a conceptually simple algorithm to solve MVMP.
This algorithm uses an algorithm of solving FR as a subroutine, and
is viewed as a simplified version of
Murota's {\em combinatorial relaxation algorithm}~\cite{Murota95_SICOMP} 
for $\deg \det$; see also \cite[Section 7.1]{MurotaMatrix}.

We assume that the position of
the zero submatrix in FR is in the upper-left corner, 
and recall our notation, such as ${\bf 1}_k$, ${\bf 1}$, 
and $(t^{\alpha})$ in (\ref{eqn:t^alpha}).
%
%
%
\begin{description}
	\item[Algorithm: Deg-Det~\cite{HH_degdet}]
	\item[Input:] $B= \sum_{k =1}^m B_kx_k$, and
	a feasible solution $P,Q$ for MVMP.
	\item[Output:] $\deg \Det B$.
	\item[1:]  Solve the problem FR for linear symbolic  matrix $(PBQ)^{(0)}$ to
	obtain optimal $S,T$, where $\ncrank (PBQ)^{(0)} = 2n - r - s$ and
	$S(PBQ)^{(0)}T$ has an $r\times s$ zero submatrix in the upper-left corner.
	\item[2:] If $(PBQ)^{(0)}$ is nc-nonsingular, i.e., $r+s=n$,
	then output $\deg \Det B = -\deg \det P - \deg \det Q$. Otherwise,
	let $(P,Q) \leftarrow ((t^{{\bf 1}_{r}})SP, QT(t^{{\bf 1}_{s}-{\bf 1}}))$,
	and go to step 1.
\end{description}

The algorithm works as follows:
The matrix $SPAQT$ after step~1 has a negative degree in each entry of
its upper-left $r \times s$ submatrix.
Multiplying $t$ for the first $r$ rows and $t^{-1}$
for the last $n-s$ columns produces no entry of positive degree.
Thus, the next solution $(P,Q) := \left((t^{{\bf 1}_{r}})SP, QT(t^{{\bf 1}_{s}-{\bf 1}})\right)$
is feasible to MVMP,
and decreases the objective value by $r+s - n (> 0)$.
If the algorithm terminates, then $(P,Q)$ is optimal by Lemma~\ref{lem:optimality}.
If the algorithm does not terminate,
then $\deg \Det B = -\infty$ by (\ref{eqn:weakduality_degDet}).

The \textbf{Deg-Det} algorithm can incorporate the following strategies in steps 1 and 2:
\begin{description}
	\item[{\bf Dominant-MVS strategy}:] In step 1 of {\bf Deg-Det}, choose optimal $S,T$ with maximum $r$ (and minimum $s$).
	\item[{\bf Long-step strategy}:] In step 2 of {\bf Deg-Det}, replace $({\bf 1}_r, {\bf 1}_{s}-{\bf 1})$ by
	$(\kappa {\bf 1}_r, \kappa ({\bf 1}_{s}-{\bf 1}))$
	for some integer $\kappa > 0$ such that the next solution is feasible.
\end{description}
We will show that these strategies bring polynomial iteration complexity 
for linear monomial matrices $A[c]$.

\subsubsection{Euclidean Building}\label{subsub:building}
Analogously to FR viewed as a vector-subspaces optimization MVSP, 
the problem MVMP is viewed as a submodules optimization problem. 
Observe that the change $P,Q \to SP,QT$ for any $S,T \in GL_n(\KK(t)^-)$
keeps the feasibility and the objective value.
Namely, rather than $P,Q$, 
the $\KK(t)^-$-modules generated by $P,Q$ are true variables in MVMP.
Particularly, MVMP is an optimization 
over (the product of) the families of full-rank free $\KK(t)^-$-modules\footnote{Such a module is called a {\em lattice} in the literature.} in $\KK(t)^n$.
These modules form an abstract simplicial complex (by inclusion relation), 
which is known as the {\em Euclidean building} for $GL_n(\KK(t))$; 
see e.g., \cite[Chapter 19]{Garrett}.

In this view, 
the {\bf Deg-Det} algorithm minimizes the objective function by tracing the 1-skeleton of the building, where an adjacent vertex is uniquely determined by
optimal solution $U,V$ of MVSP 
at the current vertex (the modules generated by $P, Q$); see \cite{HH_degdet}.
Therefore, when adopting the dominant-MVS strategy,
the minimizing trajectory in the building produced by {\bf Deg-Det}  
is uniquely determined, though $S,T$ in each step is not unique.

\subsubsection{Linear symbolic monomial matrix}
For a matrix $A = \sum_{k=1}^mA_kx_k$ in (\ref{eqn:A})
and an integer vector $c \in \ZZ^m$, as in (\ref{eqn:A[c]}), 
consider a linear symbolic monomial matrix 
$A[c] :=  \sum_{k=1}^m A_k x_k t^{c_k}$.
We see in Section~\ref{subsec:relationstoCO} below that 
this class of matrices captures the weighted maximization
of several combinatorial optimization problems, 
where $c$ plays the role of a cost vector. 

A natural question from computational complexity viewpoint
is whether one can compute $\deg \Det A[c]$ in time polynomial dependence of $c$. 
The {\bf Deg-Det} algorithm above runs in time pseudo-polynomial in $c$.
Hirai and Ikeda~\cite{HiraiIkeda} addressed this question.
They incorporated {\em cost-scaling}, 
a standard technique in combinatorial optimization, 
with the {\bf Deg-Det} algorithm, and
obtained a polynomial time algorithm for $\deg \Det A[c]$:
\begin{Thm}[\cite{HiraiIkeda}]\label{thm:main_IH} Let $A$ be a matrix in \eqref{eqn:A} and let $c \in \ZZ^m$.
	\begin{itemize}
		\item[{\rm (1)}]
		Suppose that arithmetic operations
		over $\KK$ are performed in constant time.
		Then $\deg \Det A[c]$ can be computed in time polynomial of
		$n,m,$ and $\log C$, where $C := \max_k |c_k|$.
		\item[{\rm (2)}] Suppose that $\KK = \QQ$ and that each $A_k$ consists of integer entries whose absolute values are at most $D> 0$.
		Then $\deg \Det A[c]$ can be computed in time polynomial of
		$n,m, \log C$, and $\log D$.
	\end{itemize}
\end{Thm}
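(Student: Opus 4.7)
The plan is to combine the pseudo-polynomial \textbf{Deg-Det} algorithm with a cost-scaling scheme that processes the bits of $c$ from most significant to least significant, warm-starting each stage with the output of the previous stage. Without loss of generality assume $c \ge 0$: replacing $c$ by $c + C \mathbf{1}$ yields $A[c+C\mathbf{1}]=t^{C}A[c]$ and shifts $\deg\Det$ by $nC$, which can be subtracted at the end. Write $K := \lceil \log_{2}(2C+1)\rceil$ and let $b^{(0)},\dots,b^{(K-1)}\in\{0,1\}^m$ be the bit vectors of the shifted cost. Define $c^{(0)} := b^{(K-1)}$ and $c^{(j)}:= 2c^{(j-1)} + b^{(K-1-j)}$ for $j\ge 1$, so that $c^{(K-1)}$ equals the shifted cost. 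At stage $0$, apply \textbf{Deg-Det} to $A[c^{(0)}]$ starting from the trivial feasible pair $(I,(t^{-\mathbf{1}}))$ of objective $n$; since the $t$-degrees of $A[c^{(0)}]$ are in $\{0,1\}$, the algorithm terminates in $O(n)$ iterations.

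For the warm start from stage $j-1$ to stage $j$, take optimal $(P_{j-1},Q_{j-1})$ with $-\deg\det P_{j-1}-\deg\det Q_{j-1}=\deg\Det A[c^{(j-1)}]$ and substitute $t\mapsto t^{2}$ to obtain $(P_{j-1}(t^{2}),Q_{j-1}(t^{2}))$, which is optimal for MVMP on $A[2c^{(j-1)}]$, since each entrywise $t$-degree and each determinant degree simply doubles under this substitution. Then post-multiply $Q_{j-1}(t^{2})$ by $(t^{-\mathbf{1}})$: the bit vector $b^{(K-1-j)}\in\{0,1\}^m$ raises the entrywise $t$-degree of $P A_k Q$ by at most $1$ for every $k$, so the shift by $(t^{-\mathbf{1}})$ restores feasibility for $A[c^{(j)}]$ and raises the objective by $n$. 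A Lipschitz estimate $\deg\Det A[c+\delta]\le \deg\Det A[c]+n\|\delta\|_{\infty}$, itself proved by exactly this $(t^{-\|\delta\|_\infty\mathbf{1}})$-shift trick applied to any feasible pair in MVMP, then shows that the initial objective exceeds the optimum $\deg\Det A[c^{(j)}]$ by at most $n$. Since each \textbf{Deg-Det} iteration decreases the integer objective by at least one, stage $j$ terminates in at most $n$ iterations. With $K=O(\log C)$ stages, the total iteration count is $O(n\log C)$, and each iteration invokes the polynomial-time IQS-algorithm (Theorem~\ref{thm:ncrank_dual}) once on an $n\times n$ linear symbolic matrix over $\KK$. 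This proves part~(1).

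For part~(2), with $\KK=\QQ$, one must also bound the bit size of the rational matrices $P_j,Q_j$. At every iteration, $P$ is left-multiplied by $(t^{\mathbf{1}_r})S$ and $Q$ is right-multiplied by $T(t^{\mathbf{1}_s-\mathbf{1}})$, where $S,T\in GL_n(\QQ)$ are output by the IQS-algorithm with bit size polynomial in the bit size of the input $(PA_kQ)^{(0)}$, which in turn is controlled by the bit sizes of $P$, $Q$, and $A_k$ (the latter at most $\log D$). The key observation, afforded by the building-theoretic picture of Section~\ref{subsub:building}, is that $P$ and $Q$ only matter up to left and right multiplication by biproper matrices in $GL_n(\KK(t)^-)$; this gauge freedom permits us to periodically renormalize $P,Q$ to a canonical representative (e.g.\ a Hermite normal form over the discrete valuation ring $\KK(t)^-$) whose bit size is polynomially bounded by that of the module it generates, which itself grows by only $\poly(n,m,\log D)$ per iteration. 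Summing over the $O(n\log C)$ iterations yields the claimed $\poly(n,m,\log C,\log D)$ bit complexity.

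The main obstacle is this gauge-normalization step in part~(2). Without it, intermediate bit sizes could plausibly double at every iteration, producing a bit complexity exponential in the iteration count. Establishing an explicit Hermite-type normalization over $\KK(t)^-$ with polynomial bit guarantees, and verifying that it commutes cleanly with the scaling substitution $t\mapsto t^{2}$ used to warm-start each stage, is the real technical hurdle; here one would borrow machinery from Murota's mixed-matrix theory together with valuation-theoretic arguments specific to $\KK(t)^-$. The proximity and warm-start arguments of part~(1), by contrast, reduce to a clean application of MVMP weak duality combined with the elementary substitution $t\mapsto t^{2}$, and should go through smoothly once the Lipschitz estimate is in hand.
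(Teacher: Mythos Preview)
Your approach to part~(1) is essentially the cost-scaling idea that \cite{HiraiIkeda} uses, and the $t\mapsto t^2$ warm-start together with the Lipschitz bound is a clean way to package it. One remark: you should invoke Lemma~\ref{lem:trick} to keep $P,Q$ in the form $(t^\alpha)P_0,\,Q_0(t^\beta)$ with $P_0,Q_0\in GL_n(\KK)$ throughout; otherwise computing $(PBQ)^{(0)}$ after repeated $t\mapsto t^2$ substitutions is not obviously cheap in the arithmetic model.

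Part~(2) is where there is a genuine gap. You propose to control bit growth by periodically renormalizing $P,Q$ to a Hermite-type canonical form over $\KK(t)^-$, but you explicitly acknowledge that making this work is ``the real technical hurdle'' and you do not actually carry it out. The paper (following \cite{HiraiIkeda}) does \emph{not} attack the bit-complexity issue this way. Instead, it uses the modulo-$p$ reduction method of Iwata and Kobayashi~\cite{IwataKobayashi2017}: by Proposition~\ref{prop:blow-up_degDet}, $\deg\Det A[c]$ is the optimum of the linear functional $c$ over the integral polytope ${\cal Q}_n(A)$, whose vertices come from nonzero terms of $\det A^{\{d\}}$; for a polynomially bounded set of primes $p$ of polynomial bit-length, reducing the $A_k$ modulo $p$ preserves the optimal vertex for at least one $p$, and over $GF(p)$ there is no bit-growth issue at all, so part~(1) applies directly. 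This completely sidesteps the Hermite-normalization problem you pose, and it is not clear your proposed normalization can be made to work without essentially redoing that analysis.
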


The second result (2) is based on the {\em modulo-$p$ reduction method} by Iwata and Kobayashi~\cite{IwataKobayashi2017} devised for weighted linear matroid matching problem.
As will be mentioned in more detail in Section~\ref{sec:main}, 
$\deg \Det A[c]$ is viewed as a linear optimization 
over the polytope of the nonzero terms of $\det A^{\{d\}} \in \ZZ[x_1,x_2,\ldots,x_m]$ 
for $d \geq n-1$. This fact can be seen from Proposition~\ref{prop:blow-up_degDet}.
Then, an optimal nonzero-term is also 
an optimal nonzero-term $\deg \det A^{\{d\}}$ modulo $p$ for large prime $p$.
By using this fact, the problem is reduced to symbolic matrices $A \bmod p$ 
over the finite field $GF(p)$
for a polynomial number of primes $p$ with polynomial bit-length.

Hirai and Ikeda~\cite{HiraiIkeda} also showed that the $\log C$ dependency can be removed
by the {\em Frank-Tardos method}~\cite{FrankTardos1987}, 
a general procedure rounding a weight vector $c$ to have a bit-complexity polynomial of 
the problem dimension, which is also a standard technique 
to obtain strongly polynomial time algorithms 
in combinatorial optimization.
In Section~\ref{sec:main},   
we develop
a more natural and fast primal-dual type strongly polynomial-time
algorithm to compute $\deg \Det A[c]$, and more generally, 
the maximum degrees of minors of all sizes.

\subsection{Combinatorial optimization}\label{subsec:relationstoCO}
In this section, we explain the interactions of the above results with combinatorial optimization, which is the main theme of this paper.  
\subsubsection{Bipartite matching}
Let $G = (U \sqcup V, E)$ be a bipartite graph with two color classes $U,V$ and edge set $E$.
Suppose (for simplicity) that $U=V=[n]$. 
Define a linear symbolic matrix $A$ ({\em Edmonds matrix} by
\begin{equation}\label{eqn:bipartite}
	A := \sum_{ij \in E} e_i e_j^{\top}x_{ij},
\end{equation}
where $e_i$ denotes the $i$-th unit vector in $\KK^n$.
As is well-known, $\rank A$ is equal to the maximum size of a matching of $G$. 
It is also equal to $\ncrank A$.
To see this, observe that matrices $S,T$ in FR 
can be taken as permutation matrices 
(i.e., vector subspaces $U,V$ in MVSP can be taken as coordinate subspaces).
Then, $\ncrank A$ is equal to the maximum of $2n - |S|$ over all stable sets $S$ in $G$.
By the K\H{o}nig-Egerv\'ary theorem, this quantity equals the maximum number of a matching.
This means $\rank A = \ncrank A$. In addition, the IQS-algorithm (with $0,1$ substitution) can realize the classical augmenting path algorithm. 

Suppose that $G$ has an integer edge-weight $c:E \to \ZZ$.  
Then $\deg \det A[c]$ is nothing but the maximum weight 
of a perfect matching of $G$.
From the unweighted case and Lemma~\ref{lem:optimality}~(2), 
it holds $\deg \Det A[c] = \deg \det A[c]$.
MVMP in Theorem~\ref{thm:duality_B} becomes the LP-dual 
of the perfect matching polytope, which can be seen 
from the \mbox{(block-)}diagonal property of a solution $P,Q$ of MVMP; 
see Section~\ref{subsub:mixed} below.
Further, the {\bf Deg-Det} algorithm with the long-step and dominant MVS-strategies can realize the Hungarian method; see~\cite[Section 5.1]{HH_degdet}.

\subsubsection{Linear matroid intersection}
Let ${\bf M}_1$ and ${\bf M}_2$ be two linear matroids represented 
by vectors $a_1,a_2,\ldots,a_m$ and $b_1,b_2,\ldots,b_m$ in $\KK^n$, respectively.
Define a linear symbolic matrix $A$ by
\begin{equation}\label{eqn:A_intersection}
	A := \sum_{k =1}^m a_k b_k^{\top} x_{k}.
\end{equation}
Then $\rank A$ is equal to the maximum size of a common independent set of the two matroids ${\bf M}_1$ and ${\bf M}_2$.  
The feasibility condition of MVSP is: $U^{\bot} \ni a_k$ or $V^{\bot} \ni b_k$ for $k \in [m]$.
$(U^{\bot}, V^{\bot})$ can be chosen as 
$(\vecspan \{a_k\}_{k \in I}, \vecspan \{b_k\}_{k \in [m] \setminus I})$ for $I \subseteq [m]$, 
and the objective value is $r_{1}(I) + r_{2}([m] \setminus {I})$, where $r_i$ is the rank function of ${\bf M}_i$.
Namely, $\ncrank A$ equals $\min_{I \subseteq [m]} r_{1}(I) + r_{2}([m] \setminus {I})$.
By the matroid intersection theorem (see \cite[Theorem 41.1]{SchrijverBook}), 
it equals $\rank A$.
Thus, the rank and nc-rank are the same.
Again, the IQS-algorithm can realize the matroid intersection algorithm by Edmonds.

The weighted extension goes in a similar manner as bipartite matching.
Given a weight $c:\RR^m \to \ZZ$, $\deg \det A[c]$ is equal to the maximum weight of a common base of the two matroids. Again, from the unweighted case and Lemma~\ref{lem:optimality}~(2), it holds $\deg \det A[c] = \deg \Det A[c]$.  Transforming MVMP to the LP-dual 
of a linear optimization over the common base polytope is also possible but requires a little effort. 
Furue and Hirai~\cite{FurueHirai}
showed that the {\bf Deg-Det} algorithm 
with the two strategies realizes the {\em weight-splitting algorithm}~\cite{Frank1981} with
a new matrix implementation.

\subsubsection{Nonbipartite matching and linear matroid matching}\label{subsub:nonbipartite}
A representative example where the rank and nc-rank differ is the Tutte matrix of a nonbipartite graph $G = ([n],E)$. The Tutte matrix $A_G$ of $G$ is a linear symbolic matrix
\begin{equation}\label{eqn:Tutte}
A = \sum_{ij \in E} (e_i {e_j^{\top}} - e_j e_i^{\top})x_{ij}.
\end{equation}
The maximum matching number of $G$ equals $(1/2)\rank A$.
The rank of the Tutte matrix of $K_3$ is $2$, 
whereas the nc-rank is $3$. So the rank and nc-rank differ.
Interestingly, the nc-rank of $A$
still has a natural combinatorial interpretation: It equals 
twice the {\em fractional} matching number of $G$. 
This fact was recently revealed by Oki and Soma~\cite{OkiSoma}.
They further revealed that this relation is 
generalized for matroid matching.

A {\em matroid matching} for
a collection ${\cal H} = \{H_k\}_{k \in [m]}$ of $2$-dimensional subspaces in $\KK^n$ 
is  a subset $I \subseteq [m]$ with $2 |I| = \dim \sum_{k \in I} H_k$.
It is known~\cite{Lovasz89} that
the maximum cardinality of a matroid matching is equal to twice the rank of
\begin{equation}\label{eqn:linearmatroidmatching}
A_{\cal H} := \sum_{k \in [m]} (a_k {b_k^{\top}} - b_k a_k^{\top})x_{k},
\end{equation}
where $a_k,b_k$ is any basis of subspace $H_k$ 
for $k \in [m]$. 
Oki and Soma  showed 
that the nc-rank of $A_{\cal H}$ is equal to twice the maximum {\em fractional} matroid matching number. In Section~\ref{subsec:frac_linear}, 
we study this linear symbolic matrix $A_{\cal H}$ in more detail 
and establish an analogous relation 
for $\deg \Det A[c]$.
Note that
$\deg \det A_{\cal H}[c]$
is equal to twice the maximum weight of a matroid matching, and that
a polynomial-time algorithm
for weighted linear matroid matching, found recently 
by Iwata and Kobayashi~\cite{IwataKobayashi2017}, 
is based on a related deg-det formulation.

\subsubsection{Mixed matrix and partitioned matrix}\label{subsub:mixed} 
A {\em mixed matrix} (Murota~\cite{MurotaMatrix}) 
is the sum $Q+T$ of a matrix $Q$ over $\KK$ and the Edmonds matrix $T$ for a bipartite graph $G$,  
which is also viewed as a linear symbolic matrix $A = Q x_0 + \sum_{ij \in E}E_{ij}x_{ij}$, where $E_{ij} = e_i e_j^\top$.
From Murota's formula~\cite[Section 4.2]{MurotaMatrix}, 
its rank is interpreted as the optimal value of MVSP. 
Thus the rank and nc-rank are the same.
A {\em mixed polynomial/rational matrix}~\cite{MurotaMatrix} 
is $Q(t) x_0 + \sum_{ij \in E}E_{ij}x_{ij}t^{c_{ij}}$, 
where $Q(t)$ is a polynomial/rational over $\KK$
and $c: E \to \ZZ$ is a weight on $E$.
Via Lemma~\ref{lem:optimality}~(2), $\deg \det$ 
and $\deg \Det$ are equal.
These classes of matrices and their rank/deg-det computation are motivated 
by numeric-symbolic integrated analysis on dynamical systems, 
enhanced with methods of combinatorial optimization.

A related engineering-motivated class of matrices is (generic) {\em partitioned matrices} (Ito, Iwata, and Murota~\cite{ItoIwataMurota94}), which are
linear symbolic matrices of form:
\begin{equation} \label{eqn:2x2}
	A = \left(
	\begin{array}{ccccc}
		A_{11}x_{11} & A_{12} x_{12} &\cdots & A_{1 n} x_{1 n}\\
		A_{21}x_{21} & A_{22} x_{22}&\cdots & A_{2 n} x_{2 n} \\
		\vdots & \vdots & \ddots & \vdots \\
		A_{n1} x_{n1}&A_{n2} x_{n2}&\cdots & A_{n n} x_{n n}
	\end{array}\right),
\end{equation}
where $A_{ij}$ is an $n_i \times m_j$ matrix  over $\KK$ for $i,j \in [n]$. 
In fact, (nc-)rank computation of any linear symbolic matrix reduces to a matrix of this form; see \cite[Appendix]{HH_degdet}. 
An important feature of a partitioned matrix is: 
There is an optimal solution $S,T$ in FR that is {\em block-diagonalized} as
 \begin{align}\label{eqn:block-diagonalized}
    S = \left(
	\begin{array}{cccc}
		S_1 &  &  &  \\
		& S_2 & &  \\
            & & \ddots & \\
            & & & S_n
	\end{array}
	\right),
 \quad
 T = \left(
	\begin{array}{cccc}
		T_1 &  &  &  \\
		& T_2 & &  \\
            & & \ddots & \\
            & & & T_n
	\end{array}
	\right), 
    \end{align}
    where $S_i \in GL_{n_i}(\KK)$ and 
    $T_j \in GL_{m_j}(\KK)$ for $i,j \in [n]$. 

Observe that the Edmonds matrix in (\ref{eqn:bipartite}) is  
the special case where each submatrix $A_{ij}$ is $1 \times 1$. 
Iwata and Murota~\cite{IwataMurota95} studied 
the next case where
each submatrix $A_{ij}$ is $2 \times 2$.   
They proved a formula (good characterization) of the rank, which 
turned out to be equal to the Fortin-Rautenauer formula (Theorem~\ref{thm:FR}). 
Hence the rank and nc-rank are the same for $2\times 2$ partitioned matrices.
Along with the development of the nc-rank theory,
Iwamasa and Hirai~\cite{HiraiIwamasaIPCO}
took up the rank-computation of $2 \times 2$ partitioned matrices
as a {\em $2$-dimensional generalization} of bipartite matching.
They showed that the rank equals 
the maximum of certain algebraically-constraint $2$-matchings
in the bipartite graph of the nonzero block pattern of $A$, 
and sharpened the IQS-algorithm to develop 
a combinatorial polynomial-time augmenting path algorithm 
to compute a maximum matching.

The deg-det 
computation for weighted $2\times 2$ partitioned matrices $A[c]$, the maximum weight matching problem in this setting, are addressed by Hirai and Ikeda~\cite{HiraiIkeda} and Iwamasa~\cite{Iwamasa_degdet}, 
where $\deg \det A[c]= \deg \Det A[c]$ holds again from Lemma~\ref{lem:optimality}~(2).  
The former~\cite{HiraiIkeda} provided a cost-scaling polynomial-time algorithm using the unweighted algorithm of \cite{HiraiIwamasaIPCO} as a subroutine, which is a direct application of Theorem~\ref{thm:main_IH}. 
The latter~\cite{Iwamasa_degdet} provided a strongly polynomial-time primal-dual algorithm to 
compute the maximum degrees of minors of all sizes, and obtained algorithmically
a new min-max theorem (good characterization) for the maximum degrees of minors.
In Section~\ref{sec:main}, we extend his min-max formula for general linear monomial matrices, 
and develop a unified strongly polynomial-time primal-dual framework.

\section{Maximum degrees of subdeterminants}\label{sec:main}

In this section, we present an improved analysis and results 
for the deg-Det computation.
In addition to $\deg \det$ and $\deg \Det$,
we consider the maximum degrees of subdeterminants.
Let $B$ be a matrix in (\ref{eqn:B}).
For $\ell \in [n]$, define the maximum degrees over $\ell \times \ell$ subdeterminants of $B$ by
\begin{eqnarray*}
 	\delta_\ell (B) &:= & \max \{ \deg \det B[I,J]  \mid I,J \subseteq [n]: |I| = |J| =\ell\}, \\
 	\varDelta_\ell(B) &:= & \max \{ \deg \Det B[I,J]  \mid I,J \subseteq [n]: |I| = |J| =\ell\},
 \end{eqnarray*}
where $B[I,J]$ denote the submatrix of $B$ having row set $I$ and column set $J$.
It is basic (see e.g., \cite[section 5.1.2 (5.5)]{MurotaMatrix}) that $\delta_\ell (B) = \delta_\ell (PBQ)$ holds for any biproper $P,Q$. 
This property holds for $\varDelta_\ell$:
\begin{equation}\label{eqn:Delta(B)=Delta(PBQ)}
\varDelta_\ell (B) = \varDelta_\ell (PBQ) \quad (P,Q \in GL_n(\KK(t)^-)).
\end{equation}
See \cite[Proposition 2.9]{HH_degdet}.
 For convention, we define the degree of the determinant of an empty matrix by $\delta_0(B) = \varDelta_0(B) := 0$.
Let $\delta_{\rm max}(B)$ and $\varDelta_{\rm max}(B)$ 
denote the maximum degrees of all possible subdeterminants:
\begin{eqnarray*}
\delta_{\rm max}(B) &:= & \max_{\ell \in [0,n]} \delta_{\ell}(B), \\
\varDelta_{\rm max}(B) &:=& \max_{\ell \in [0,n]} \varDelta_{\ell}(B).
\end{eqnarray*}
The correspondents in combinatorial optimization
are clear: $\delta_{\ell}$ and $\Delta_\ell$
represent maximum weights under the cardinality constraint, 
whereas $\delta_{\rm max}$ and $\Delta_{\rm max}$ represent 
unconstraint maximum weights.

We introduce additional notation.
For a subset $D \subseteq \RR$, 
let $D^n_\downarrow$ denote the set of vectors $\alpha \in D^n$ with $\alpha_1 \geq \alpha_2 \geq \cdots \geq \alpha_n$.
Similarly, $D^n_{\uparrow}$ denotes the set of vectors $\alpha \in D^n$ with $\alpha_1 \leq \alpha_2 \leq \cdots \leq \alpha_n$.
A zero matrix is denoted by $O$ and by $O_{n,m}$ 
if it has $n$ rows and $m$ columns.

The arguments in this section are
adapted to rectangular matrices $B$ 
(by e.g., filling zeros to make $B$ being a square matrix).

\subsection{Linear symbolic rational matrices}

\subsubsection{Duality theorem}
The duality theorems for $\varDelta_{\ell}$ and $\varDelta_{\max}$ are as follows.
\begin{Thm}\label{thm:varDelta}
Let $B = B(t)$ be a matrix in {\rm (\ref{eqn:B})}.
Then $\varDelta_\ell(B)$ is equal to the optimal value of the following problem:
	\begin{eqnarray*}
		\mbox{\rm (MVMP$_{\ell}$)} \quad {\rm Min.} && - \sum_{i=n-\ell+ 1}^n \alpha_i- \sum_{j=n-\ell + 1}^n \beta_j \\
		{\rm s.t.} && \deg ((t^{\alpha})PB_k Q(t^{\beta}))_{ij} \leq 0 \quad (i,j \in [n], k \in [m]), \\
		&& \alpha, \beta \in \ZZ^n_{\downarrow}, P,Q \in GL_n (\KK(t)^-).
	\end{eqnarray*}
In addition, $\varDelta_{\rm max}(B)$ is equal to the optimal value of the following problem:
	\begin{eqnarray*}
		\mbox{\rm (MVMP$_{\rm max}$)} \quad {\rm Min.} && \sum_{i=1}^n \xi_i+ \sum_{j= 1}^n \eta_j \\
		{\rm s.t.} && \deg ((t^{-\xi})PB_k Q(t^{-\eta}))_{ij} \leq 0 \quad (i,j \in [n], k \in [m]), \\
		&& \xi, \eta \in \ZZ^n_{+}, P,Q \in GL_n (\KK(t)^-).
	\end{eqnarray*}
\end{Thm}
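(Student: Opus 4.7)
The plan is to establish both duality formulas by proving weak duality and strong duality separately.

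\textbf{Weak duality.} Fix a feasible tuple $(\alpha,\beta,P,Q)$ for MVMP$_\ell$. The constraint forces $(t^{\alpha})PBQ(t^{\beta})$ to be proper, so every $\ell\times\ell$ submatrix $(t^{\alpha_I})(PBQ)[I,J](t^{\beta_J})$ is proper and its Dieudonn\'e determinant has nonpositive degree by \eqref{eqn:degDetB<=0}, giving $\deg\Det(PBQ)[I,J]\leq-\sum_{i\in I}\alpha_i-\sum_{j\in J}\beta_j$. The biproper invariance \eqref{eqn:Delta(B)=Delta(PBQ)} transfers the bound from $PBQ$ to $B$, and maximising over $\ell$-subsets, the decreasing order of $\alpha,\beta$ localises the maximum at $I=J=\{n-\ell+1,\ldots,n\}$; this yields the asserted MVMP$_\ell$ upper bound on $\varDelta_\ell(B)$. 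The MVMP$_{\rm max}$ weak duality is entirely analogous: the nonnegativity $\xi,\eta\geq 0$ replaces the decreasing order and lets us bound $\max_{|I|=\ell}\sum_{i\in I}\xi_i$ by $\sum_i\xi_i$ before taking the final maximum over $\ell$.

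\textbf{Strong duality via augmentation.} For MVMP$_{\rm max}$ I would consider the augmented matrix $\tilde B:=B+\sum_{i,j\in[n]}e_ie_j^{\top}y_{ij}$ with fresh noncommutative variables $y_{ij}$. Combining the Laplace-type expansion $\det(B^{\{d\}}+Y^{\{d\}})=\sum_{S,T}\pm\det Y^{\{d\}}[S,T]\det B^{\{d\}}[\bar S,\bar T]$ with Proposition~\ref{prop:blow-up_degDet} shows $\deg\Det\tilde B=\varDelta_{\rm max}(B)$. Applying Theorem~\ref{thm:duality_B} to $\tilde B$ produces $\tilde P,\tilde Q\in GL_n(\KK(t))$ of that objective; the rank-one constraints coming from $\tilde B_k=e_ie_j^{\top}$ force $\deg\tilde P_{i',i}+\deg\tilde Q_{j,j'}\leq 0$ for all $i,i',j,j'$, which in turn implies $\alpha^P_1+\alpha^Q_1\leq 0$, where $\alpha^P,\alpha^Q\in\ZZ^n_{\downarrow}$ are the invariants of the Cartan decompositions $\tilde P=K^P_1(t^{\alpha^P})K^P_2$ and $\tilde Q=K^Q_1(t^{\alpha^Q})K^Q_2$ with $K^{\ast}_{\ast}\in GL_n(\KK(t)^-)$. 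A uniform shift $(\tilde P,\tilde Q)\mapsto(t^c\tilde P,t^{-c}\tilde Q)$ with $c\in[\alpha^Q_1,-\alpha^P_1]$ (preserving both feasibility and objective) brings $\alpha^P,\alpha^Q\leq 0$; then left-multiplying $\tilde P$ by $(K^P_1)^{-1}$ and right-multiplying $\tilde Q$ by $(K^Q_2)^{-1}$---both biproper, so MVMP-feasibility and objective are untouched---puts everything in MVMP$_{\rm max}$ form $\tilde P=(t^{-\xi})P$, $\tilde Q=Q(t^{-\eta})$ with $\xi:=-\alpha^P,\,\eta:=-\alpha^Q\in\ZZ_+^n$ and $P:=K^P_2,\,Q:=K^Q_1$ biproper. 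For MVMP$_\ell$ the same template is applied to the $(2n-\ell)\times(2n-\ell)$ augmentation $\tilde B_\ell:=\bigl(\begin{smallmatrix}B & Y\\ Z & 0\end{smallmatrix}\bigr)$ with fresh noncommutative variables in $Y$ (size $n\times(n-\ell)$) and $Z$ (size $(n-\ell)\times n$); a generalised Laplace expansion on the bottom-right zero block selects pairs of $\ell$-subsets of $[n]$ and yields $\deg\Det\tilde B_\ell=\varDelta_\ell(B)$, and a block-wise Cartan decomposition of the resulting $\tilde P,\tilde Q$ extracts decreasing $\alpha,\beta\in\ZZ^n_{\downarrow}$ together with biproper $P,Q\in GL_n(\KK(t)^-)$ achieving the MVMP$_\ell$ objective.

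\textbf{Main obstacle.} The delicate step is the extraction for MVMP$_\ell$: verifying that the $n$-dimensional invariants produced by the block-wise Cartan decomposition of the $(2n-\ell)$-dimensional $\tilde P,\tilde Q$ satisfy both the ordering $\alpha,\beta\in\ZZ^n_{\downarrow}$ \emph{and} $-\sum_{i=n-\ell+1}^n\alpha_i-\sum_{j=n-\ell+1}^n\beta_j=\varDelta_\ell(B)$, i.e., that only the last $\ell$ coordinates of $\alpha,\beta$ carry nontrivial information while the first $n-\ell$ are absorbed into the $Y,Z$-blocks. This requires careful bookkeeping of which rows and columns of the augmented decomposition correspond to the original $B$-block, together with Lemma~\ref{lem:optimality}(1) applied to $\tilde B_\ell$ to certify that the extracted $P,Q$ are genuinely biproper (not just proper) over $\KK(t)^-$ rather than over the free skew field. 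The introduction indicates an alternative, more uniform algorithmic proof via an adaptation of the \textbf{Deg-Det} algorithm (\textbf{Deg-SubDet}) that maintains $(\alpha,\beta,P,Q)$ in the prescribed cone structure throughout the iterations---applying the Fortin-Reutenauer subroutine (Theorem~\ref{thm:ncrank_dual}) to the leading term $((t^{\alpha})PBQ(t^{\beta}))^{(0)}$ at each step and shifting $\alpha,\beta$ along the zero-block coordinates in the spirit of step~2 of \textbf{Deg-Det}---which produces MVMP$_\ell$ solutions for all $\ell$ simultaneously and sidesteps the block-decomposition issue by construction.
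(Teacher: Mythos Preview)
Your weak duality argument is correct and matches the paper's \eqref{eqn:weak_duality0}--\eqref{eqn:weak_duality}. The augmentation strategy for strong duality is genuinely different from the paper's approach, but as written it has a real gap.

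\textbf{The gap.} For MVMP$_{\rm max}$ you assert that the Laplace expansion of $\det(B^{\{d\}}+Y^{\{d\}})$ together with Proposition~\ref{prop:blow-up_degDet} yields $\deg\Det\tilde B=\varDelta_{\rm max}(B)$. What the Laplace expansion actually gives is $\deg\det\tilde B^{\{d\}}=\delta_{\rm max}(B^{\{d\}})$, the maximum $t$-degree over \emph{all} minors of $B^{\{d\}}$, so that $\deg\Det\tilde B=\tfrac{1}{d}\delta_{\rm max}(B^{\{d\}})$. On the other hand, by Proposition~\ref{prop:blow-up_degDet} applied to each $B[I,J]$, one has $\varDelta_{\rm max}(B)=\tfrac{1}{d}\max_{I,J}\deg\det B^{\{d\}}[I\times[d],\,J\times[d]]$, a maximum only over the \emph{block-aligned} minors. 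The inequality $\varDelta_{\rm max}(B)\le\tfrac{1}{d}\delta_{\rm max}(B^{\{d\}})$ is immediate, but the reverse inequality---that some block-aligned minor attains the overall maximum---is exactly what is needed to close your chain $\varDelta_{\rm max}(B)\le\text{MVMP}_{\rm max}\text{-opt}\le\deg\Det\tilde B$, and you have not supplied it. Bounding $\delta_{\rm max}(B^{\{d\}})$ by $d$ times any MVMP$_{\rm max}(B)$-feasible objective is easy (tensor the feasible solution with $I_d$ and use weak duality for $B^{\{d\}}$), but that only gives $\deg\Det\tilde B\le\text{MVMP}_{\rm max}\text{-opt}$, which is the wrong direction. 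The same issue arises verbatim for $\tilde B_\ell$: your Laplace formula gives $\deg\Det\tilde B_\ell=\tfrac{1}{d}\delta_{\ell d}(B^{\{d\}})$, and identifying this with $\varDelta_\ell(B)$ again requires that the maximal $\ell d\times\ell d$ minor of $B^{\{d\}}$ be block-aligned. So the reduction to Theorem~\ref{thm:duality_B} is circular as stated; the Cartan-decomposition extraction (which is carried out cleanly for MVMP$_{\rm max}$) only shows $\text{MVMP}_{\rm max}\text{-opt}=\deg\Det\tilde B$, not that either equals $\varDelta_{\rm max}(B)$.

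\textbf{How the paper proceeds.} The paper avoids this entirely by the algorithmic route you sketch in your final paragraph. Starting from $(\alpha,\beta,P,Q)=(0,-d\mathbf 1,I,I)$ it runs \textbf{Deg-Det}, but at each step chooses the FR-optimum $(S,T)$ in the special triangular/permutation form of Lemma~\ref{lem:(S,T)}, which (together with Lemma~\ref{lem:commtative}) guarantees that the update preserves the shape $(t^{\alpha})P,\,Q(t^{\beta})$ with $\alpha,\beta\in\ZZ^n_\downarrow$ and $P,Q$ biproper, \emph{and} keeps an nc-nonsingular lower-right $\ell\times\ell$ block in the leading term. These two facts are precisely the complementary slackness conditions (C1)--(C2) that force equality in \eqref{eqn:weak_duality}. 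Lemma~\ref{lem:consecutive} shows the nc-rank of the leading term is monotone, so the algorithm sweeps through all $\ell$ and produces an MVMP$_\ell$-optimal solution for each; the MVMP$_{\rm max}$ formula then follows from a short saddle-point argument on the same trajectory. This constructive proof sidesteps any need to compare block-aligned and general minors of $B^{\{d\}}$, and as a by-product yields the \textbf{Deg-SubDet} algorithm.
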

In the case where $B$ is a mixed rational matrix (Section~\ref{subsub:mixed}),
this theorem provides new explicit formulas of $\delta_{\ell} (= \varDelta_{\ell})$, 
$\delta_{\rm max} (= \varDelta_{\rm max})$. 
The proof is given in Section~\ref{subsub:proof} after 
studying general properties of the {\bf Deg-Det} algorithm in Section~\ref{subsub:properties}.
We here verify the weak duality in MVMP$_{\rm \ell}$.
For any feasible $(t^{\alpha})P, Q(t^{\beta})$ and $I,J \subseteq [n]$, it holds
\begin{equation*}
0 \geq \deg \Det ((t^{\alpha}) PBQ (t^{\beta}))[I,J] = \sum_{i \in I }\alpha_i + \sum_{j \in J} \beta_j + \deg \Det PBQ[I,J].
\end{equation*}
Hence we obtain
\begin{equation}\label{eqn:weak_duality0}
\deg \Det PBQ[I,J] \leq - \sum_{i \in I} \alpha_i - \sum_{j \in J} \beta_j \leq - \sum_{i=n-\ell+ 1}^n \alpha_i- \sum_{j=n-\ell + 1}^n \beta_j .
\end{equation}
Then, by (\ref{eqn:Delta(B)=Delta(PBQ)}), it holds 
\begin{equation}\label{eqn:weak_duality}
	\varDelta_{\ell}(B) = \varDelta_{\ell}(PBQ) \leq - \sum_{i=n-\ell+ 1}^n \alpha_i- \sum_{j=n-\ell + 1}^n \beta_j .
\end{equation}
Note that this upper bounding idea (for $\delta_{\ell}(B)$) already appeared
in Iwata, Murota, and Sakuta~\cite{IwataMurotaSakuta1996}.

\subsubsection{Analysis on the {\bf Deg-Det} algorithm}\label{subsub:properties}

We start with two preliminary lemmas.
The first one is a partial commutative property 
between $(t^{\alpha})$ and triangular matrices. 
The proof is straightforward.
%
\begin{Lem}\label{lem:commtative}
Let $U \in GL_n(\KK(t)^-)$ be upper-unitriangular biproper, and let $\alpha \in \ZZ^n_\downarrow$. Then
\[
U (t^{\alpha}) = (t^{\alpha}) \tilde{U},
\]
where $\tilde U := (t^{-\alpha})U (t^{\alpha}) \in GL_n(\KK(t)^-)$ is upper-unitriangular biproper.
\end{Lem}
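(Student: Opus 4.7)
The plan is to verify the identity $U(t^{\alpha}) = (t^{\alpha})\tilde{U}$ by a direct entry-wise computation, and then check that the resulting $\tilde{U} := (t^{-\alpha})U(t^{\alpha})$ satisfies the three claimed properties: it is upper-unitriangular, proper, and biproper.

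First, I would compute the $(i,j)$-entry of $\tilde{U}$ explicitly. Since $(t^{\alpha})$ is diagonal, we have
\[
\tilde{U}_{ij} = t^{-\alpha_i} U_{ij} t^{\alpha_j} = t^{\alpha_j - \alpha_i} U_{ij}.
\]
From this, the identity $U(t^{\alpha}) = (t^{\alpha})\tilde{U}$ is automatic by construction, so the only content is to verify that $\tilde{U}$ lies in $GL_n(\KK(t)^-)$ and is upper-unitriangular.

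Next, I would split into three cases on $(i,j)$. If $i > j$, then $U_{ij} = 0$ since $U$ is upper-triangular, and so $\tilde{U}_{ij} = 0$. If $i = j$, then $U_{ii} = 1$ and the exponent $\alpha_j - \alpha_i$ vanishes, giving $\tilde{U}_{ii} = 1$. If $i < j$, the hypothesis $\alpha \in \ZZ^n_\downarrow$ gives $\alpha_i \geq \alpha_j$, so $\alpha_j - \alpha_i \leq 0$; combined with $\deg U_{ij} \leq 0$ (which holds because $U$ is proper), we obtain
\[
\deg \tilde{U}_{ij} = (\alpha_j - \alpha_i) + \deg U_{ij} \leq 0,
\]
so $\tilde{U}_{ij} \in \KK(t)^-$. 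Hence $\tilde{U}$ is an upper-unitriangular proper matrix, and any such matrix is biproper, as noted in the preliminaries.

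There is no genuine obstacle here: the entire content is the monotonicity $\alpha_i \geq \alpha_j$ for $i < j$, which ensures that conjugation by $(t^{\alpha})$ does not introduce positive-degree entries above the diagonal. The only bookkeeping point worth emphasizing is why $\alpha \in \ZZ^n_\downarrow$ (rather than arbitrary) is essential: without this condition, the upper-triangular entries of $\tilde{U}$ could acquire positive $t$-degrees and $\tilde{U}$ would fail to be proper.
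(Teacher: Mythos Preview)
Your proof is correct and is precisely the straightforward entry-wise verification the paper has in mind; the paper itself simply states that the proof is straightforward and omits the details. The only substantive point is exactly the one you identify, namely that $\alpha_j - \alpha_i \leq 0$ for $i < j$ combined with properness of $U$ keeps $\tilde U$ proper, and then upper-unitriangular proper implies biproper.
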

The second one is a version of LU-decomposition, 
called the {\em Bruhat decomposition}.
We identify a permutation $\pi:[n] \to [n]$ with a permutation matrix $\sum_{i=1}^n e_i e^{\top}_{\pi(i)}$.
\begin{Lem}[{Bruhat decomposition}]\label{lem:Bruhat}
	Any nonsingular matrix $S \in GL_n(\KK)$ can be decomposed as
	\[
	S = L \pi_S U
	\]
	for permutation matrix $\pi_S$, lower-triangular matrix $L$, and upper-triangular matrix $U$, where $\pi_S$ is uniquely determined.
\end{Lem}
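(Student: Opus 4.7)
The plan is to prove existence by a Gaussian-style row-and-column reduction that builds the permutation $\sigma$ corresponding to $\pi_S$ one row at a time, and to establish uniqueness via a single invariant---the rank of every leading submatrix $S[[1,i],[1,j]]$---which is preserved by the relevant triangular multiplications and which already distinguishes permutation matrices.

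For existence, I would iterate over rows $i = 1, \ldots, n$, maintaining the invariant that after step $i$, rows $1, \ldots, i$ and columns $\sigma(1), \ldots, \sigma(i)$ are ``clean,'' meaning that row $k$ (resp.\ column $\sigma(k)$) contains a single nonzero entry located at the pivot position $(k, \sigma(k))$. At step $i$, define $\sigma(i)$ as the leftmost column not in $\{\sigma(1), \ldots, \sigma(i-1)\}$ whose entry in the current row $i$ is nonzero; such a column exists because the working matrix is still nonsingular. Then apply the elementary column operations $\mathrm{col}_j \leftarrow \mathrm{col}_j - c_j\,\mathrm{col}_{\sigma(i)}$ for $j > \sigma(i)$---encoded as right multiplication by an upper-unitriangular matrix---to clear the rest of row $i$, and follow with the elementary row operations $\mathrm{row}_{i'} \leftarrow \mathrm{row}_{i'} - c'_{i'}\,\mathrm{row}_i$ for $i' > i$---encoded as left multiplication by a lower-unitriangular matrix---to clear column $\sigma(i)$ below row $i$. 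The cleanness invariant is maintained because previously clean columns $\sigma(k)$ with $k < i$ already vanish in row $i$ and are thus never modified, while the column operations at step $i$ leave previously clean rows unchanged since those rows vanish in column $\sigma(i)$. After $n$ steps the working matrix has exactly one nonzero per row and column; absorbing the diagonal entries into $U$ yields $L^{-1} S U^{-1} = \pi_S$.

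For uniqueness, I would define $r_S(i, j) := \mathrm{rank}\,S[[1,i],[1,j]]$ and show it is invariant under $S \mapsto L S U$ for any invertible lower-triangular $L$ and upper-triangular $U$. The key computation is: since $L$ is lower-triangular, the first $i$ rows of $L$ are supported in the first $i$ columns; since $U$ is upper-triangular, the first $j$ columns of $U$ are supported in the first $j$ rows. Hence the leading $i \times j$ block of $L S U$ factorises as $L[[1,i],[1,i]] \cdot S[[1,i],[1,j]] \cdot U[[1,j],[1,j]]$, and the outer factors---triangular with nonzero diagonals---are invertible, so the rank is unchanged. For a permutation matrix $\pi$ associated to $\sigma$, one computes $r_\pi(i, j) = |\{k \leq i : \sigma(k) \leq j\}|$, and $\sigma$ is recovered from the mixed second difference $r_\pi(i, j) - r_\pi(i-1, j) - r_\pi(i, j-1) + r_\pi(i-1, j-1)$, which equals $1$ exactly at the pivot positions. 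Hence any two decompositions $S = L_1 \pi_1 U_1 = L_2 \pi_2 U_2$ yield $r_{\pi_1} = r_S = r_{\pi_2}$, forcing $\pi_1 = \pi_2$.

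I expect no serious obstacle: the lemma is classical and, once one identifies the leading-submatrix rank as the correct invariant, both halves reduce to direct computation. The only subtlety worth isolating is the cleanness-preservation argument in the existence step, which relies jointly on the choice of $\sigma(i)$ as an unused column and on the inductive invariant for the earlier rows.
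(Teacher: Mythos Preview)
Your proof is correct. The existence argument coincides with the paper's: both perform Gaussian elimination row by row, selecting the leftmost available pivot and clearing via lower-triangular row operations and upper-triangular column operations (your explicit ``cleanness'' invariant makes precise what the paper leaves implicit, and your ``leftmost unused column'' is the same as the paper's ``smallest nonzero column'' since previously used columns have already been cleared in the current row). For uniqueness, however, you take a different route. The paper argues directly at the entry level: if $\pi'_{ij}=1$, then the $(i,j)$-entry of $L\pi'U$ remains nonzero, forcing $\pi_{ij}=1$. You instead introduce the invariant $r_S(i,j)=\rank S[[i],[j]]$, prove it is preserved under $S\mapsto LSU$ via the block factorisation $(LSU)[[i],[j]] = L[[i],[i]]\,S[[i],[j]]\,U[[j],[j]]$, and recover the permutation from the second differences of $r_\pi$. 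Your argument is the standard Schubert-cell characterisation---more systematic and immediately generalisable to other Coxeter settings---while the paper's is terser but leans on a sharper entry-wise observation.
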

For completeness, we give a proof based on \cite[Theorem 9.2.2]{Cohn_Algebra}, which works on a matrix over any skew field and can deduce the Bruhat normal form.
\begin{proof}
	For $i=1,2,\ldots,n$ in order,
	choose the smallest index $\pi(i):= j \in [n]$ with $S_{ij} \neq 0$.
	Add scalar multiples of the $i$-th row and $j$-th column to later rows and columns
	to make $S_{i'j} = S_{ij'} = 0$ for all $(i',j') \neq (i,j)$.
	By the nonsingularity of $S$, this is actually possible, and $\pi$ is a permutation on $[n]$.
        This procedure
	corresponds to multiplying to $S$ a lower-triangular matrix $L$ from the left
	and an upper-triangular matrix $U$ from the right.
	The resulting matrix is $\pi D$, where $D$ is a diagonal matrix. Namely $LSU = \pi D$,
	and we obtain desired decomposition $S = L^{-1} \pi (DU^{-1})$.

	To show the uniqueness, it suffices to show that
	$L\pi' U= \pi$ implies $\pi' = \pi$.
	This is immediate from the following observation:
	If $\pi'_{ij} = 1$, the multiplication
	of a lower/upper-triangular matrix from left/right to $\pi'$
	does not affect the $(i,j)$-entry (since $\pi'_{i'j} = \pi'_{ij'} =0$ for $i' < i$, $j' < j$).
	Namely, $\pi_{ij}' = \pi_{ij} = 1$, implying $\pi'=\pi$.
\end{proof}
%


\begin{Lem}\label{lem:(S,T)}
	\begin{itemize}
		\item[{\rm (1)}] In step 1 of {\bf Deg-Det}, optimal $S,T$ can be chosen in the form
		\begin{equation}\label{eqn:chosen}
			S = \pi_S U,\ T = M \pi_T,
		\end{equation}
		where $U$ is upper-triangular and $M$ is lower-triangular.
		\item[{\rm (2)}] In addition, if $\ncrank (PBQ)^{(0)} = \ell$
		and the lower-right $\ell \times \ell$ submatrix of $(PBQ)^{(0)}$ is nc-nonsingular,
		then the above $S,T$ can be chosen as
		\begin{equation}\label{eqn:pi_n-k}
			\pi_S[n-\ell]  = [n-\ell],\ \pi_T[n-\ell] = [n-\ell],
		\end{equation}
		and the lower-right $\ell \times \ell$ submatrix of $((t^{{\bf 1}_r})SPBQT(t^{{\bf 1}_s- {\bf 1}}))^{(0)}$ is nc-nonsingular.
	\end{itemize}
\end{Lem}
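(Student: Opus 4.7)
For part~(1), my plan is to apply the Bruhat decompositions $S = L_S \pi_S U_S$ and $T = L_T \pi_T U_T$ from Lemma~\ref{lem:Bruhat}, and replace $(S,T)$ with $(L_S^{-1}S,\, T U_T^{-1}) = (\pi_S U_S,\, L_T \pi_T)$. I would verify that these substitutions preserve the upper-left $r \times s$ zero block of $SAT$: left-multiplication by the lower-triangular matrix $L_S^{-1}$ replaces each of the first $r$ rows of $SAT$ by a linear combination of earlier rows, all of which have zeros in the first $s$ columns; the dual statement holds for right-multiplication by the upper-triangular $U_T^{-1}$. The resulting pair is still optimal and of the required form $S = \pi_S U$, $T = M \pi_T$.

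For part~(2), I plan to start from any optimal MVS $(U^*, V^*)$ of $A := (PBQ)^{(0)}$, so $r^* := \dim U^*$ and $s^* := \dim V^*$ satisfy $r^* + s^* = 2n - \ell$. Write $W_1 := \vecspan\{e_1, \ldots, e_{n-\ell}\}$ and $W_2 := \vecspan\{e_{n-\ell+1}, \ldots, e_n\}$. The structural identity $\dim(U^* \cap W_2) + \dim(V^* \cap W_2) = \ell$ follows from two opposite bounds: ``$\leq \ell$'' because $(U^* \cap W_2,\, V^* \cap W_2)$ is a vanishing pair for $A^{BR}$ with $\ncrank A^{BR} = \ell$, and ``$\geq \ell$'' from summing $\dim(U^* \cap W_2) \geq r^* - (n-\ell)$ and $\dim(V^* \cap W_2) \geq s^* - (n-\ell)$, each of which comes from $\dim U^* - \dim(U^* \cap W_2) \leq \dim W_1 = n - \ell$ (and analogously for $V^*$). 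Setting $a := r^* - (n-\ell)$ and $b := s^* - (n-\ell)$ (so $a + b = \ell$), the identity also yields surjectivity of the projections of $U^*$ and $V^*$ onto $W_1$.

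I would then take the first $n - \ell$ rows of $S$ to be a basis of $U^*$ whose $W_1$-projections form the standard basis of $W_1$, rows $n-\ell+1, \ldots, r^*$ to be a basis of $U^* \cap W_2$, and rows $r^*+1, \ldots, n$ to be any basis of a complement of $U^* \cap W_2$ in $W_2$. This yields $S$ in block upper-triangular form with both diagonal blocks in $GL_{n-\ell}(\KK)$ and $GL_\ell(\KK)$; the Bruhat procedure on such an $S$ then confirms $\pi_S[n-\ell] = [n-\ell]$. A dual construction for $T$ produces a block lower-triangular matrix with $\pi_T[n-\ell] = [n-\ell]$.

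For the final assertion, a direct computation shows that the lower-right $\ell \times \ell$ block of $(C')^{(0)} := ((t^{\mathbf{1}_r})SPBQT(t^{\mathbf{1}_s - \mathbf{1}}))^{(0)}$, after a column permutation within that block, takes the block upper-triangular form $\begin{pmatrix} Y & X \\ 0 & Z \end{pmatrix}$, where $Y$ (size $a \times a$) represents the bilinear form $A^{BR}$ on $(U^* \cap W_2) \times V_c$ with $V_c$ a chosen complement of $V^* \cap W_2$ in $W_2$, and $Z$ (size $b \times b$) represents $A^{BR}$ on $U_c \times (V^* \cap W_2)$ analogously. The chief technical obstacle is an exchange argument establishing nc-nonsingularity of both $Y$ and $Z$: if $Y$ admitted a vanishing pair $(U_0, V_0)$ of dimension sum $> a$ inside $(U^* \cap W_2) \times V_c$, then $(U_0,\, (V^* \cap W_2) + V_0)$ would be a vanishing pair for $A^{BR}$ of dimension sum $b + \dim U_0 + \dim V_0 > a + b = \ell$, contradicting $\ncrank A^{BR} = \ell$ (and symmetrically for $Z$). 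Hence the lower-right block has nc-rank $a + b = \ell$.
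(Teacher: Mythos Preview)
Your argument is correct and largely parallels the paper's, recast in subspace (MVSP) language rather than matrix-rank language. Part~(1) is identical to the paper. For part~(2), your dimension identity $\dim(U^*\cap W_2)+\dim(V^*\cap W_2)=\ell$ is precisely the paper's Claim $\rank S[[r],[n-\ell]]=\rank T[[n-\ell],[s]]=n-\ell$ in dual form, proved by the same pair of inequalities. Your explicit construction of $S$ from $U^*$ is a legitimate alternative to the paper's ``start from arbitrary optimal $S$ and row-reduce''; either way one lands on a block-upper-triangular $S$ with nonsingular diagonal blocks, whence $\pi_S[n-\ell]=[n-\ell]$.

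For the final assertion the two proofs diverge, and yours has a small gap worth closing. The paper argues \emph{only} from the block shapes: once $S=\pi_S U$ and $T=M\pi_T$ with $\pi_S,\pi_T$ fixing $[n-\ell]$ setwise, $S$ and $T$ are automatically block-triangular, so the lower-right $\ell\times\ell$ block of $SAT$ is $S_2 A^{BR} T_2$, nc-nonsingular because $A^{BR}$ is; the $r\times s$ zero block forces the shape $\left(\begin{smallmatrix}0&A_2^*\\A_1^*&*\end{smallmatrix}\right)$, hence $A_1^*,A_2^*$ are nc-nonsingular, and the update just moves the zero corner. Your exchange argument for the nc-nonsingularity of $Y,Z$ is equally valid, but it relies on the row/column \emph{labels} (rows $n-\ell+1,\ldots,r$ lie in $U^*\cap W_2$, etc.), which you established for your hand-built $S$, not for the post-Bruhat $\pi_S U$ that the statement actually requires. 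You should note that for your specific $S=\left(\begin{smallmatrix}I&*\\0&S_2\end{smallmatrix}\right)$ the Bruhat $L$-factor is block-diagonal (no row operation ever mixes the top $n-\ell$ rows into the bottom $\ell$), so the subspace labels survive; alternatively, drop the labels and use the paper's purely block-structural argument, which is shorter and sidesteps the issue.
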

\begin{proof}
	Let $S,T$ be an optimal solution in step 1.
	Consider the Bruhat decomposition $S = L \pi_S U$ and $T = M \pi_T V$, where $L,M$ are lower-triangular and $U,V$ are upper-trianglar.
	Let $A := (PBQ)^{(0)}$.
	Then $SAT$ has an $r \times s$ zero submatrix in upper-left corner,
	and so does $L^{-1} SAT U^{-1} = \pi_S U A M \pi_T$, which implies (1).

	(2). Let $S,T$ be an optimal solution in step 1 (not necessarily the form of (\ref{eqn:chosen})).
        We claim:
	\begin{Clm}
		$\rank S[[r],[n-\ell]] = \rank T[[n-\ell], [s]] = n-\ell$.
	\end{Clm}
	\begin{proof}
		Let $a := \rank S[[r],[n-\ell]]$ and $b:= T[[n-\ell], [s]]$. By adding $n-\ell \geq a$ and $n-\ell \geq b$ (with $\ell = 2n-r-s$), we have
		\begin{equation*}
			2(r+s - n) \geq a + b.
		\end{equation*}
		By row elimination within the first $r$ rows,
		we may assume $S$ to satisfy $S[[r-a],[n-\ell]] = O$.
		Similarly, we may assume $T[[n-\ell],[s-b]] = O$.

		Let $S^* := S[[r-a], [n] \setminus [n-\ell]]$, $T^* := T[[n] \setminus [n-\ell], [s-b]]$, and
		let $A^*$ denote the lower-right $\ell \times \ell$ submatrix of $A$,
        which is nc-nonsingular by the assumption.
		From $S[[r],[n]]AT[[n],[s]] = O$, we have
		\[
		S^{*} A^* T^* = O_{r-a, s-b}.
		\]
		Since $A^*$ has nc-rank $\ell$, the sum $(r-a)+(s-b)$
		of the row size of $S^*$ and column size of $T^*$ is at most $\ell (= 2n-r-s)$.
		Thus $r-a + s- b \leq 2n-r-s$, and
		\begin{equation*}
			2(r+s -n) \leq a+b.
		\end{equation*}
		Therefore the equality holds, which implies $n-\ell= a =b$.
	\end{proof}

	We can assume that $S[[n-\ell],[n-\ell]]$ and $T[[n-\ell],[n-\ell]]$ are nonsingular.
	Then, in the proof of Bruhat decomposition (Lemma~\ref{lem:Bruhat}),
	for row index $i \leq n-\ell$, column index $\pi_S(i) = j \leq n-\ell$ is chosen.
	This means that $\pi_S[n-\ell] = \pi_T [n-\ell] = [n-\ell]$.
        In the Bruhat decomposition $S= L \pi_S U$, $T = M \pi_T V$,
	it holds
	\[
	\pi_S U= \left(
	\begin{array}{cc}
		S_1 & \ast \\
		O_{\ell,n-\ell} & S_2
	\end{array}
	\right), \quad  M \pi_T  = \left(
	\begin{array}{cc}
		T_1 & O_{n-\ell,\ell}  \\
		\ast & T_2
	\end{array}
	\right),
	\]
	where $S_i,T_i$ are nonsingular.
	The lower-right $\ell \times \ell$ submatrix of $\pi_S U A M \pi_T$
	is written as $S_2 A^* T_2$ that is nc-nonsingular.
	On the other hand,  the upper-left $r \times s$ submatrix of $\pi_S U A M \pi_T$
	is the zero matrix. $S_2 A^* T_2$ has a block structure as
	\begin{equation}\label{eqn:before}
	S_2 A^* T_2 = \left(
	\begin{array}{cc}
		O_{r',s'} & A_2^* \\
		A_1^* & \ast
	\end{array}
	\right),
	\end{equation}
	where $r' := \ell - (n-r) = n-s$, $s' := \ell - (n-s) = n-r$, and $r'+s' =\ell$.
	Necessarily both $A_1^*$ and $A_2^*$ are (square) nc-nonsingular matrices.
	Now,
	the lower-right $\ell \times \ell$ submatrix of $((t^{{\bf 1}_r})\pi_S U A M \pi_T(t^{{\bf 1}_s-{\bf 1}}))^{(0)}$
	is given as
	\begin{equation}\label{eqn:after}
	\left(
	\begin{array}{cc}
		\ast & A_2^* \\
		A_1^* & O_{s',r'}
	\end{array}
	\right),
	\end{equation}
	which is nc-nonsingular, as required.
\end{proof}

\begin{Lem}\label{lem:consecutive}
	In two consecutive iterations of {\bf Deg-Det},
        suppose that feasible solution $P,Q$ is updated as
	\[
	P,Q \ \to \ (t^{{\bf 1}_r})SP,QT(t^{{\bf 1}-{\bf 1}_s}) \ \to \ (t^{\11_{r'}})S'(t^{\11_r})SP,QT(t^{\11_s-\11})T'(t^{\11_{s'}-\11}).
	\]
 Then we have the following:
	\begin{itemize}
		\item[{\rm (1)}] $r'+s' \leq r+s$; in particular, $\ncrank (PBQ)^{(0)}$ is monotone nondecreasing.
		\item[{\rm (2)}] Suppose that the dominant-MVS strategy is used.
        If $r'+s' = r + s$, then $\pi_{S'}[r'] \subseteq [r]$ and $\pi_{T'}^{-1}[s'] \supseteq [s]$.
 %
	\end{itemize}
\end{Lem}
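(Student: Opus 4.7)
My approach is to compute the leading coefficient $(\tilde P B\tilde Q)^{(0)}$ explicitly in block form, prove the monotonicity in (1) via a block-wise additivity inequality for nc-rank, and then treat (2) by a dominant-MVS analysis in the resulting tight case.

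\textbf{Part (1).} Let $A := (PBQ)^{(0)}$. The choice of $S,T$ gives an $r \times s$ zero block in the upper-left of $SAT$, so
\[
SAT = \begin{pmatrix} 0_{r,s} & C_{12} \\ C_{21} & C_{22} \end{pmatrix},
\]
and expanding $(t^{\11_r})SPBQT(t^{\11_s-\11})$ in powers of $t$ yields
\[
(\tilde P B \tilde Q)^{(0)} = \begin{pmatrix} D & C_{12} \\ C_{21} & 0_{n-r, n-s} \end{pmatrix},
\]
where $D$ is the upper-left $r \times s$ submatrix of the $t^{-1}$-coefficient of $SPBQT$. Applying subadditivity of nc-rank to the row splitting $SAT = \begin{pmatrix} 0 & C_{12} \\ 0 & 0 \end{pmatrix} + \begin{pmatrix} 0 & 0 \\ C_{21} & C_{22} \end{pmatrix}$ gives
\[
2n - r - s = \ncrank SAT \le \ncrank C_{12} + \ncrank \begin{pmatrix} C_{21} & C_{22} \end{pmatrix} \le (n-s) + (n-r),
\]
forcing $\ncrank C_{12} = n - s$; a symmetric column splitting gives $\ncrank C_{21} = n - r$. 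The key step is the lower bound
\[
\ncrank (\tilde P B \tilde Q)^{(0)} \ge \ncrank C_{12} + \ncrank C_{21},
\]
which holds because $C_{12}$ and $C_{21}$ occupy disjoint rows and disjoint columns of $(\tilde P B \tilde Q)^{(0)}$ while their opposite corner vanishes: for ordinary rank this follows from a column-space decomposition of a $2 \times 2$ block matrix with a zero corner, and passing to blow-ups via Theorem~\ref{thm:blow-up} transfers the inequality to nc-rank. Combining, $\ncrank (\tilde P B \tilde Q)^{(0)} \ge 2n - r - s = \ncrank (PBQ)^{(0)}$, that is $r' + s' \le r + s$.

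\textbf{Part (2).} In the equality case $r' + s' = r + s$, the block lower bound above is tight. The plan is to use the modular lattice structure of optimal vanishing pairs (noted after Theorem~\ref{thm:FR}) to characterize the dominant MVS $(U', V')$ of $(\tilde P B \tilde Q)^{(0)}$: the dominant $U'$ is the sum of all optimal row-subspaces and the dominant $V'$ is the intersection of all optimal column-subspaces. Under tight block additivity, these extremal pairs must decompose compatibly with the block partition, so the dominant $U'$ projects injectively onto the first $r$ coordinates (it has no nonzero vector supported entirely in the last $n - r$), while the dominant $V'$ projects surjectively onto the first $s$ coordinates. By Lemma~\ref{lem:(S,T)} applied to $S' = \pi_{S'} U$ and $T' = M' \pi_{T'}$, these conditions amount precisely to $\pi_{S'}[r'] \subseteq [r]$ (the pivot columns of the first $r'$ rows of $S'$ lie in $[r]$) and $\pi_{T'}^{-1}[s'] \supseteq [s]$ (the first $s$ columns of $T'$ have their pivot rows in $[s']$).

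The main obstacle is part (2): showing that tight block additivity really does force the dominant MVS pair into the claimed transversal form, and then carefully translating the resulting geometric conditions into the pivot-index conditions via the permutation conventions of Lemma~\ref{lem:(S,T)}. This step combines the uniqueness of the dominant pair from the modular lattice structure with the structural consequences of equality in the block-additivity bound, and is where the bulk of the technical work lies.
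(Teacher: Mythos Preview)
Your argument for Part~(1) is correct and genuinely different from the paper's. You compute the new leading term explicitly as
\[
(\tilde P B\tilde Q)^{(0)}=\begin{pmatrix}D&C_{12}\\ C_{21}&0\end{pmatrix}
\]
and bound its nc-rank from below by $\ncrank C_{12}+\ncrank C_{21}=(n-s)+(n-r)$ via the block inequality (which does transfer to nc-rank through blow-ups, as you say). The paper instead never computes this leading term: it takes the Bruhat decomposition $S'=L\pi_{S'}U$, $T'=M\pi_{T'}V$, uses Lemma~\ref{lem:commtative} to commute the $(t^{\,\cdot})$-factors past the triangular pieces, and reads off \emph{two} coordinate zero blocks $[X\cup X',\,Y\cap Y']$ and $[X\cap X',\,Y\cup Y']$ inside a matrix $(\tilde U B\tilde M)^{(0)}$ having the same nc-rank as $(PBQ)^{(0)}$. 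Your route is more hands-on; the paper's is more structural and, crucially, sets up Part~(2) for free.

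Part~(2) has a real gap. Your reformulation is fine: $\pi_{S'}[r']\subseteq[r]$ is indeed equivalent to the dominant $U'$ for $(\tilde P B\tilde Q)^{(0)}$ projecting injectively onto the first $r$ coordinates, and likewise for $V'$. But the sentence ``under tight block additivity, these extremal pairs must decompose compatibly with the block partition'' is doing all the work and is not justified. More importantly, your outline never invokes the dominance of the \emph{first} iteration (that $r$ is the maximal row-dimension among optimal vanishing pairs for $(PBQ)^{(0)}$), and without it the claim is false: if the first step had chosen a non-maximal $r$, one could have $r'>r$ and then $\pi_{S'}[r']\subseteq[r]$ is impossible on cardinality grounds. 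The modular-lattice uniqueness of the dominant pair for $(\tilde PB\tilde Q)^{(0)}$ alone cannot supply this; the constraint comes from $(PBQ)^{(0)}$.

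In the paper's proof this is exactly where the two zero blocks pay off. Because $(\tilde U B\tilde M)^{(0)}$ has the \emph{same} nc-rank as $(PBQ)^{(0)}$, each zero block of total size $r+s$ is an optimal vanishing pair \emph{for $(PBQ)^{(0)}$} (up to the biproper change of basis). If $X'=\pi_{S'}[r']\not\subseteq[r]$, the block $[X\cup X',\,Y\cap Y']$ has more than $r$ rows, contradicting the first-iteration dominance directly. Your block matrix $(\tilde PB\tilde Q)^{(0)}$ lives over a different leading term, so its optimal pairs are not a priori comparable to those of $(PBQ)^{(0)}$; bridging that is precisely the missing step. If you want to salvage your approach, you would need to show that any optimal $(U',V')$ for $(\tilde PB\tilde Q)^{(0)}$ with $U'\cap(\{0\}^r\times\KK^{n-r})\neq\{0\}$ produces an optimal pair for $(PBQ)^{(0)}$ with row-dimension exceeding $r$ --- essentially reconstructing the paper's two-block argument from your side.
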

\begin{proof}
	We can assume for simplicity that $SP = QT = I$ (by replacing $B$ with $SPBQT$).
	Consider Bruhat decomposition $S' = L \pi_{S'} U$ and $T' = M \pi_{T'} V$, where $L,M$ are lower-triangular and $U,V$ are upper-triangular.
 In the view of Lemma~\ref{lem:commtative}, we have
        \begin{eqnarray*}
        (t^{\11_{r'}})S'(t^{\11_r}) & = & (t^{\11_{r'}}) L \pi_{S'} U (t^{\11_r})
        = \tilde L (t^{\11_{r'}}) \pi_{S'}  (t^{\11_r}) \tilde U
        \\
        & = & \tilde L\pi_{S'}\pi_{S'}^{-1} (t^{\11_{r'}}) \pi_{S'}  (t^{\11_r}) \tilde U = \tilde L\pi_{S'} (t^{\11_{\pi_{S'}[r']}+\11_r}) \tilde U,
        \end{eqnarray*}
        where $\tilde L\pi_{S'}$ and $\tilde U$ are biproper.
	Similarly, we have
	\begin{eqnarray*}
	(t^{\11_s-\11})T'(t^{\11_{s'}-\11}) &=& (t^{\11_s-\11})M \pi_{T'} V(t^{\11_{s'}-\11})  = \tilde M  (t^{\11_s-\11}) \pi_{T'} (t^{\11_{s'}-\11}) \tilde V\\
 &=& \tilde M  (t^{\11_s-\11}) \pi_{T'} (t^{\11_{s'}-\11}) \pi_{T'}^{-1}\pi_{T'}\tilde V =
 \tilde M (t^{\11_s-\11+ \11_{\pi^{-1}_{T'}[s']}- \11}) \pi_{T'}\tilde V 
	\end{eqnarray*}
	for biproper $\tilde M,\tilde V$.

	Let $(X,Y,X',Y'):=([r], [s], \pi_{S'}[r'], \pi^{-1}_{T'}[s'])$.
	Then $(t^{\11_{X'}+\11_{X}}) \tilde U,  \tilde M(t^{\11_{Y}-\11+ \11_{Y'}-\11})$ is feasible.
	This implies that $(\tilde UB\tilde M)^{(0)}$ has the following two zero blocks:
	\begin{equation}\label{eqn:two_zero_blocks}
		(\tilde UB\tilde M)^{(0)}[X \cup X', Y \cap Y'] = O,\ (\tilde UB\tilde M)^{(0)}[X \cap X', Y \cup Y'] = O.
	\end{equation}
	Also we have
	\begin{equation}
		r'+s' + r + s = |X'|+|Y'| + |X|+|Y|= |X \cap X'| + |Y \cup Y'| +  |X \cup X'| + |Y \cap Y'|.
	\end{equation}
	On the other hand, the nc-rank of $B^{(0)}$ and $(\tilde UB\tilde M)^{(0)}$ are the same:
	\[
	\ncrank B^{(0)} = \ncrank \tilde U^{(0)} B^{(0)} \tilde M^{(0)} = \ncrank (\tilde UB\tilde M)^{(0)}.
	\]

	(1). If $r'+s' > r + s$, then one of the zero blocks (\ref{eqn:two_zero_blocks}) has size larger than $r+s$,
	which leads to a contradiction $\ncrank (\tilde UB\tilde M)^{(0)} < 2n-r-s = \ncrank B^{(0)} = \ncrank (\tilde UB\tilde M)^{(0)}$.

    (2). It necessarily holds $r'+s' = r+s = |X \cap X'| + |Y \cup Y'| = |X \cup X'| + |Y \cap Y'|$.
    If $X' \not \subseteq X$, then
    $|X \cup X'| > |X| =r$,
    and the zero block $(\tilde UB\tilde M)^{(0)}[X \cup X', Y \cap Y']$ has row size larger than $r$,
    which contradicts the dominant-MVS strategy.
    Thus $X' \subseteq X$, and $|Y \cap Y'| = |Y|$, implying $Y \subseteq Y'$.
%
\end{proof}

\subsubsection{Proof of Theorem~\ref{thm:varDelta}}\label{subsub:proof}
Let $(t^{\alpha})P,Q(t^{\beta})$ be a feasible solution in MVMP$_\ell$.
Suppose that the following conditions (complementary slackness) hold:
\begin{itemize}
	\item[(C1)] $\alpha_1 = \cdots = \alpha_{n-\ell} \geq \alpha_{n-\ell +1} \geq \cdots \geq \alpha_{n}$ and $\beta_1 = \cdots = \beta_{n-\ell} \geq \beta_{n-\ell +1} \geq \cdots \geq \beta_{n}$.
	\item[(C2)] The $\ell \times \ell$ lower-right submatrix of $((t^{\alpha})PBQ(t^{\beta}))^{(0)}$ is nc-nonsingular.
\end{itemize}
In this case, by taking the last $\ell$ rows and $\ell$ columns as $I$ and $J$, respectively, it holds
\[
0 = \deg \Det (t^{\alpha})PBQ(t^{\beta}) [I,J] = \deg \Det(PBQ)[I,J] + \sum_{i=n-\ell+ 1}^n \alpha_i+ \sum_{j=n-\ell + 1}^n \beta_j.
\]
This means that the inequalities in (\ref{eqn:weak_duality0}) and (\ref{eqn:weak_duality}) hold in equality:
\begin{equation*}
	\varDelta_{\ell}(B) = \varDelta_{\ell}(PBQ) =  - \sum_{i=n-\ell+ 1}^n \alpha_i- \sum_{j=n-\ell + 1}^n \beta_j.
\end{equation*}

We are going to show that there is a feasible solution
$(t^{\alpha})P, Q(t^{\beta})$ satisfying (C1), (C2) for all $\ell \in [n]$
with $\varDelta_{\ell}(B) > -\infty$, and that MVMP$_\ell$ is unbounded for other $\ell$.
Initially, let $P=Q = I$,
$\alpha := 0$, and $\beta = - d {\bf 1}$
for the maximum degree $d (=\varDelta_{1}(B) = \delta_{1}(B))$ of entries in $B$.
Then $(t^{\alpha})P,Q(t^{\beta})$ is obviously feasible.
Let $\ell := \ncrank ((t^{\alpha})PBQ(t^{\beta}))^{(0)}$.
By permuting rows and columns,
it satisfies (C1) and (C2) for all $\ell' \leq \ell$.

Consider to apply one iteration of {\bf Deg-Det} from $(t^{\alpha})P,Q(t^{\beta})$, where optimal $(S,T)$
is chosen as in Lemma~\ref{lem:(S,T)}.
The next solution is $(t^{{\bf 1}_r})\pi_S U (t^{\alpha})P, Q(t^{\beta})M \pi_T (t^{\11_s -\11})$, which is rewritten as
\begin{equation}
	(t^{\11_r})\pi_S U (t^{\alpha})P = \pi_S (t^{\11_{\pi_S[r]} + \alpha}) \tilde U P, \quad
	Q(t^{\beta}) M \pi_T (t^{\11_s-\11}) =Q\tilde M(t^{\beta+\11_{\pi_T^{-1}[s]}-\11}) \pi_T
\end{equation}
for biproper $\tilde U, \tilde M$.
By (\ref{eqn:pi_n-k}), both $\pi_S[r]$ and $\pi_T^{-1}[s]$ include $[n-\ell]$, where $n -\ell \leq \min (r,s)$.
Permute the last $\ell$ rows and columns to obtain
a feasible solution $(t^{\alpha'})P', Q'(t^{\beta'})$ satisfying (C1), (C2) with $\ell$.
For $\ell' \geq \ell$, it holds
\begin{eqnarray*}
	- \sum_{i=n-\ell'+ 1}^n \alpha'_i- \sum_{j=n-\ell' + 1}^n \beta'_j &= &- \sum_{i=n-\ell'+ 1}^n \alpha_i - (r-n+\ell') - \sum_{j=n-\ell' + 1}^n \beta_j + (n-s)\\
	&=&  - \sum_{i=n-\ell'+ 1}^n \alpha_i- \sum_{j=n-\ell' + 1}^n \beta_j - (\ell' - \ell).
\end{eqnarray*}
This means that the upper bound of $\varDelta_{\ell'}(B)$ strictly decreases if $\ell' > \ell$.

Case 1: Suppose that $\ncrank ((t^{\alpha'})P'BQ'(t^{\beta'}))^{(0)} = \ell + \delta$ for $\delta > 0$.
The last $\ell \times \ell$ submatrix can be extended to an nc-nonsingular $(\ell + \delta) \times (\ell + \delta)$ submatrix.
By permuting the first $n-\ell$ rows and columns, we may assume that
the last $(\ell + \delta') \times (\ell + \delta')$ submatrix of $((t^{\alpha'})P'BQ'(t^{\beta'}))^{(0)}$
is nc-nonsingular for all $\delta' \leq \delta$.
Thus, $(t^{\alpha'})P', Q'(t^{\beta'})$ satisfies (C1), (C2)
with $\ell + \delta'$ for all $\delta' \leq \delta$.

Case 2: Suppose that $\ncrank ((t^{\alpha'})P'BQ'(t^{\beta'}))^{(0)} = \ncrank ((t^{\alpha})PBQ(t^{\beta}))^{(0)} = \ell$.
In this case, the new feasible solution $(t^{\alpha'})P', Q'(t^{\beta'})$
still satisfies (C1), (C2) with $\ell$.
Let $(t^\alpha)P, Q(t^\beta) \leftarrow (t^{\alpha'})P', Q'(t^{\beta'})$.
Repeat the iterations until $\ncrank(t^{\alpha})PBQ(t^{\beta})^{(0)}$
increases. Then it reduces to the above case.
If it repeats infinitely many steps,
then MVMP$_\ell$ is unbounded and
$\varDelta_{\ell'}(B) = - \infty$ for all $\ell' > \ell$, as desired.

Next, we show the latter part of the theorem, the formula for $\varDelta_{\rm max}(B)$. 
From the former part, we have
\begin{eqnarray*}
\varDelta_{\rm max}(B) &= & \max_{\ell \in [0,n]} 
\varDelta_{\ell}(B) = \max_{\ell\in [0,n]} \min_{(t^{\alpha})P,Q(t^{\beta})} - \sum_{i=n-\ell+1}^n \alpha_i - \sum_{j=n-\ell+1}^n \beta_j \\
&\leq& \min_{(t^{\alpha})P,Q(t^{\beta})} \max_{\ell\in [0,n]} - \sum_{i=n-\ell+1}^n \alpha_i - \sum_{j=n-\ell+1}^n \beta_j \\
&=& \min_{(t^{\alpha})P,Q(t^{\beta})} \sum_{i=1}^n \max \{0, - \alpha_i - \beta_i\},
\end{eqnarray*}
where the inequality follows from general property 
$\max_y \min_x f(x,y) \leq \min_x \max_y f(x,y)$ 
of a bifunction $f$. 
In the last minimization, 
we may assume that $\alpha_i,\beta_j$ are nonpositive.
By letting $\xi := -\alpha$, $\eta := -\beta$,
we obtain MVMP$_{\rm max}$.
Therefore, it suffices to show that 
the inequality holds in the equality. 
This follows from the existence of 
a saddle-point\footnote{ 
A saddle-point $(x^*,y^*)$ of a function $f: D \times E \to \RR$
is a point $(x^*,y^*)$ satisfying $\min_{x \in D} f(x,y^*) = f(x^*,y^*) = \max_{y \in E} f(x^*,y)$. In this case, equality $\min_{x \in D} \max_{y \in E}f(x,y) =\max_{y \in E} \min_{x \in D}f(x,y)$ holds. Indeed, 
from $\max_{y \in E}\min_{x \in D} f(x,y) \geq \min_{x \in D} f(x,y^*)$ 
and $\max_{y \in E} f(x^*,y) \geq \min_{x \in D} \max_{y \in E} f(x,y)$, 
it holds  $\max_{y \in E}\min_{x \in D} f(x,y) \geq \min_{x \in D} \max_{y \in E} f(x,y)$
On the other hand, the reverse inequality always holds.
}
for bifunction 
$\ell, ((t^{\alpha})P,Q(t^{\beta})) \mapsto - \sum_{i=n-\ell+1}^n \alpha_i + \beta_i$, which we are going to show.

In the above proof of the former part, we observe:
\begin{itemize}
\item $\alpha_1$ is increasing from $0$, and $\beta_1 = -d$ is constant.
\item In Case 1, it holds $\varDelta_{\ell+ \delta'}(B) = \varDelta_{\ell}(B) - \delta'(\alpha_1 + \beta_1)$ 
for $\delta' \leq \delta$.
\end{itemize}
Suppose that $d > 0$.
Consider the first time of Case 1 
in which $\alpha'_1 + \beta'_1 \geq 0$ or $\ell+\delta = n$.
For the former case, 
it must hold 
$\alpha_1' + \beta_1' = \cdots = \alpha'_{n-\ell} + \beta'_{n-\ell} \geq 0 \geq \alpha'_{n-\ell+1} + \beta'_{n-\ell+1} \geq \cdots \geq \alpha'_{n} + \beta'_{n}$, and 
$(t^{\alpha'})P',Q'(t^{\beta'})$ is also optimal for MVMP$_\ell$.
This means that $\ell, ((t^{\alpha'})P',Q'(t^{\beta'}))$ is a required saddle-point.
For the latter case, 
$n, ((t^{\alpha'})P',Q'(t^{\beta'}))$ is a saddle-point.
If $d \leq 0$, then the initial solution with $\ell = 0$  is a saddle point. 
This completes the proof.

Finally, we point out the concavity property of $\varDelta_{\ell}$, 
which will be used later. 
\begin{Prop}\label{prop:concavity}
$\ell \mapsto \varDelta_{\ell}(B)$ is discretely concave:
\[
 \varDelta_{\ell+1}(B) + \varDelta_{\ell -1}(B) \leq 2\varDelta_{\ell}(B).
\]
\end{Prop}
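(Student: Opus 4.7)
The plan is to leverage Theorem~\ref{thm:varDelta} together with the structural observation that the feasibility region of (MVMP$_\ell$) does not depend on $\ell$—only the objective does. Hence a single feasible tuple $(\alpha,\beta,P,Q)$ is feasible at every level simultaneously, and a dual optimum at level $\ell$ automatically supplies upper bounds on $\varDelta_{\ell-1}(B)$ and $\varDelta_{\ell+1}(B)$ expressed in terms of the \emph{same} $(\alpha^*,\beta^*)$.

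When either $\varDelta_{\ell-1}(B)$ or $\varDelta_{\ell+1}(B)$ equals $-\infty$, the inequality is trivial. Otherwise, standard monotonicity of nc-rank under row or column deletion (nc-rank decreases by at most one when deleting a row or column) implies that the set $\{\ell : \varDelta_\ell(B) > -\infty\}$ is an interval starting at $0$, so $\varDelta_\ell(B)$ is also finite and the minimum in (MVMP$_\ell$) is attained by some $(\alpha^*,\beta^*,P^*,Q^*)$ with $\alpha^*,\beta^* \in \ZZ^n_\downarrow$, as constructed in the proof of Theorem~\ref{thm:varDelta}. Plugging this same quadruple into the objectives of (MVMP$_{\ell-1}$) and (MVMP$_{\ell+1}$) and invoking weak duality gives
\begin{equation*}
\varDelta_{\ell-1}(B) \leq -\sum_{i=n-\ell+2}^n \alpha_i^* - \sum_{j=n-\ell+2}^n \beta_j^*, \qquad \varDelta_{\ell+1}(B) \leq -\sum_{i=n-\ell}^n \alpha_i^* - \sum_{j=n-\ell}^n \beta_j^*.
\end{equation*}

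Adding these two bounds and subtracting the identity $2\varDelta_\ell(B) = -2\sum_{i=n-\ell+1}^n \alpha_i^* - 2\sum_{j=n-\ell+1}^n \beta_j^*$ causes all terms but four to cancel, leaving
\begin{equation*}
\varDelta_{\ell-1}(B) + \varDelta_{\ell+1}(B) - 2\varDelta_\ell(B) \leq (\alpha_{n-\ell+1}^* - \alpha_{n-\ell}^*) + (\beta_{n-\ell+1}^* - \beta_{n-\ell}^*) \leq 0,
\end{equation*}
where the final inequality is precisely the monotonicity condition $\alpha^*,\beta^* \in \ZZ^n_\downarrow$. This yields the claimed discrete concavity. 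There is no genuine obstacle here: the entire proof reduces the concavity of $\varDelta_\ell$ directly to the componentwise monotonicity of the dual variables enforced by the constraint $\alpha,\beta \in \ZZ^n_\downarrow$ in the dual formulation.
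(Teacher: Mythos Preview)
Your argument is correct. You use only the strong duality statement of Theorem~\ref{thm:varDelta} plus the observation that the feasible region of (MVMP$_\ell$) is independent of $\ell$; concavity then drops out of the monotonicity constraint $\alpha^*,\beta^*\in\ZZ^n_\downarrow$ via a one-line telescoping computation. This is the standard ``parametric LP value function is concave'' mechanism applied here.

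The paper argues slightly differently: it points back into the algorithmic proof of Theorem~\ref{thm:varDelta}, where the trajectory of the {\bf Deg-SubDet} procedure shows that $\varDelta_{\ell+1}(B)-\varDelta_\ell(B)$ equals $-(\alpha_1+\beta_1)$ at the moment $\ell$ is reached, with $\alpha_1$ monotonically increasing and $\beta_1$ constant; hence the increments are nonincreasing. The paper also notes an alternative derivation from the valuated-bimatroid property of $(I,J)\mapsto\deg\Det B[I,J]$. Your route is more self-contained (it treats Theorem~\ref{thm:varDelta} as a black box and avoids the algorithmic internals), while the paper's route additionally yields the explicit formula for the increment and ties concavity to the algorithm's monotone behaviour.
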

Indeed, this follows from the above fact that
$\ell \mapsto \varDelta_{\ell+1}(B) - \varDelta_{\ell}(B)$ is monotone nonincreasing. 
It is known~\cite{HH_degdet,Oki} that
the function $(I, J) \mapsto \deg \Det B[I, J]$ is a 
\emph{valuated bimatroid}; see \cite[Section 5.2.5]{MurotaMatrix} for valuated bimatroids.
Then, this property can also be derived from the general property~\cite[Theorem 5.2.13]{MurotaMatrix} of valuated bimatroids.

\subsubsection{The {\bf Deg-Det} algorithm for $\varDelta_{\ell}(B)$}\label{subsub:DegSubDet}

The above proof of Theorem~\ref{thm:varDelta} is algorithmic,
and naturally yields the following algorithm for computing $\varDelta_{\ell}(B)$ for all $\ell$.
We define $\mindeg p/q$ for a nonzero rational $p/q$
as $\min\{i \mid a_i \neq 0\} - \deg q$, where $p= \sum_i a_it^i$.
\begin{description}
	\item[Algorithm: Deg-SubDet]
	\item[Input:] $B= \sum_{k =1}^m B_kx_k$, where  $d :=\max_{i,j} \deg B_{ij}$ and  $d_{0} := \min_{i,j: B_{ij} \neq 0} \mindeg B_{ij}$.
	\item[Output:] $\varDelta_{\ell}(B)$ for $\ell =1,2,\ldots,n$.
	\item[Initialization:] Let $\alpha := 0$,
	$\beta := - d{\bf 1}$, and $P=Q = I$.
	\item[0:] Compute $\ell := \ncrank ((t^{\alpha})PBQ(t^{\beta}))^{(0)}$, and
	output $\ell' d$ as $\varDelta_{\ell'}(B)$
	for $\ell' \in [\ell]$.
	\item[1:] Choose an optimal solution $S,T$,
	where $S((t^{\alpha})PBQ(t^{\beta}))^{(0)}T$ has an $r \times s$ zero submatrix in upper-left corner, and $2n- r-s  = \ell$.
      \item[2:] Let $\kappa > 0$ be the maximum integer such that
	$(t^{\kappa \11_{r}})S (t^{\alpha})P, Q(t^{\beta})T(t^{\kappa(\11_s-\11)})$ is feasible.
 Represent $(t^{\kappa \11_{r}})S (t^{\alpha})P = S' (t^{\alpha'})P'$ and $Q(t^{\beta})T(t^{\kappa(\11_s-\11)}) = Q' (t^{\beta'})T'$
 for $\alpha',\beta' \in \ZZ^n_{\downarrow}$ and biproper $S',T',P',Q'$.
 Let 
 $(t^{\alpha})P, Q (t^\beta) \leftarrow (t^{\alpha'})P',Q'(t^{\beta'})$.
	\item[3:] Compute $\overline \ell := \ncrank ((t^{\alpha})PBQ (t^\beta))^{(0)}$:
	\begin{description}
		\item[3-1] Suppose that $\overline \ell > \ell$.
		Output $\varDelta_{\ell'}(B) := - \sum_{i=n-\ell'+1}^n\alpha_{i} - \sum_{j=n-\ell'+1}^n\beta_{j}$
		for $\ell' \in [\ell+1, \overline \ell]$.
		If $\overline{\ell} = n$, then stop.
		Otherwise, let $\ell \leftarrow \overline{\ell}$ and go to step 1.
		\item[3-2]  Suppose that $\overline{\ell} = \ell$.
		If $- \sum_{i=n-\ell+1}^n\alpha_{i} - \sum_{j=n-\ell+1}^n\beta_{j} < \ell d_{0}$, then output $\varDelta_{\ell'}(B) := -\infty$ for $\ell' \in [\ell, n]$, and stop.
		Otherwise, go to step 1.
	\end{description}
\end{description}

\begin{Thm}\label{thm:formal_primal_dual}
	{\bf Deg-SubDet} correctly outputs all $\varDelta_\ell(B)$ in $n(d-d_0)$ iterations.
\end{Thm}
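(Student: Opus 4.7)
The plan is to verify two things separately: correctness of every outputted value $\varDelta_{\ell'}(B)$, and the $n(d-d_0)$ iteration bound. Both hinge on maintaining the complementary slackness conditions (C1), (C2) from the proof of Theorem~\ref{thm:varDelta} throughout the algorithm.

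For correctness I would proceed by induction on the iteration count, keeping the invariant that at the start of each iteration the current solution $((t^{\alpha})P, Q(t^{\beta}))$ is feasible with $\alpha,\beta\in\ZZ^n_{\downarrow}$ and jointly satisfies (C1), (C2) for every $\ell'\le\ell:=\ncrank((t^{\alpha})PBQ(t^{\beta}))^{(0)}$. This invariant is preserved because Lemma~\ref{lem:(S,T)}(2) lets one choose the optimal $S,T$ in step~1 so that $\pi_S[n-\ell]=\pi_T[n-\ell]=[n-\ell]$ and so that the lower-right $\ell\times\ell$ block remains nc-nonsingular after the long-step shift, via the block structure in~(\ref{eqn:before})--(\ref{eqn:after}); since $\pi_S[r]\supseteq[n-\ell]$ and $\pi_T^{-1}[s]\supseteq[n-\ell]$, the uniform shift acts identically on the first $n-\ell$ coordinates of $\alpha$ and of $\beta$, preserving the flat-top property (C1). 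When the rank jumps to $\overline\ell>\ell$ in step~3-1, a re-permutation within $[n-\overline\ell+1,n]$ re-establishes (C2) for every new $\ell'\in[\ell+1,\overline\ell]$. By the saddle-point argument from the proof of Theorem~\ref{thm:varDelta}, the outputted quantities then coincide with $\varDelta_{\ell'}(B)$. For the $-\infty$ branch in step~3-2, any finite $\varDelta_\ell(B)$ must satisfy $\varDelta_\ell(B)\ge\ell d_0$ (expanding a nonzero Dieudonn\'e subdeterminant through the Bruhat normal form yields $\ell$ nonzero factors, each of degree $\ge d_0$), so $f_\ell<\ell d_0$ forces $\varDelta_\ell(B)=-\infty$; by the discrete concavity in Proposition~\ref{prop:concavity} this propagates to $\varDelta_{\ell'}(B)=-\infty$ for all $\ell'\ge\ell$.

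For the iteration bound I would track the potential $\Phi:=-\sum_{i=1}^n\alpha_i-\sum_{j=1}^n\beta_j=f_n$. Initially $\Phi=nd$, and since each iteration runs only while $\ell<n$ (hence $r+s>n$), the long-step update strictly decreases $\Phi$ by $\kappa(r+s-n)\ge 1$. The claim then reduces to establishing $\Phi\ge nd_0$ throughout the run. From the update rules and the inclusion $[n-\ell]\subseteq\pi_T^{-1}[s]$ one verifies that $\beta_1$ is invariant at $-d$ (no coordinate of $\beta$ increases, yet the shift vanishes on the first $n-\ell$ indices); coupling this with the flat-top property (C1), the crude inequalities $\sum_i\alpha_i\le n\alpha_1$ and $\sum_j\beta_j\ge n\beta_n$, and the feasibility of a corner entry of $(t^{\alpha})PB_kQ(t^{\beta})$ reaching the minimum input degree $d_0$, one extracts $\alpha_1+\beta_n\le -d_0$ and hence $\Phi\ge nd_0$. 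Combining with the per-iteration decrease of $\Phi$ by at least $1$ yields the iteration count $nd-nd_0=n(d-d_0)$.

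The main obstacle is precisely this lower bound on $\Phi$: the native feasibility inequalities constrain only the degrees of entries of the transformed matrix $(t^{\alpha})PB_kQ(t^{\beta})$, whereas $\Phi$ depends on the extremal dual values $\alpha_1$ and $\beta_n$, so one must carefully propagate the entry-wise inequalities along the preserved flat-top structure (and identify a corner entry that stays nonzero throughout). Once this is settled, the potential argument gives the $n(d-d_0)$ bound and, together with the invariant from the correctness analysis, completes the proof of Theorem~\ref{thm:formal_primal_dual}.
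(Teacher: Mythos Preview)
Your correctness argument via the invariants (C1), (C2) is essentially what the paper does (it simply points back to the constructive proof of Theorem~\ref{thm:varDelta}). The gaps are in the two quantitative pieces.

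\textbf{The lower bound $\varDelta_\ell(B)\ge \ell d_0$.} Your Bruhat-normal-form argument (``$\ell$ nonzero factors, each of degree $\ge d_0$'') is false. Take $B=\begin{pmatrix} t^{10}x_1 & x_2\\ x_2 & 0\end{pmatrix}$, so $d_0=0$. The Bruhat diagonals are $t^{10}x_1$ and $-t^{-10}x_2x_1^{-1}x_2$; the second has degree $-10<d_0$. Gaussian elimination over $\FF(t)$ involves division by pivots and gives no entrywise degree control. The paper avoids this by passing to the blow-up (Proposition~\ref{prop:blow-up_degDet}): for suitable $g$, $\deg\Det B[I,J]=\frac1g\deg\det B[I,J]^{\{g\}}$, and every monomial coefficient of the \emph{commutative} determinant $\det B[I,J]^{\{g\}}$ is a signed sum of products of $g\ell$ entries $(B_k)_{i'j'}$, each with $\mindeg\ge d_0$; hence any nonzero coefficient has $\deg\ge\mindeg\ge g\ell d_0$.

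\textbf{The iteration bound.} Your chain of inequalities does not close. From $\alpha,\beta\in\ZZ^n_\downarrow$ one gets $\Phi\ge -n(\alpha_1+\beta_1)$, whereas the ``corner entry'' feasibility you invoke controls $\alpha_1+\beta_n$ (or $\alpha_n+\beta_1$), not $\alpha_1+\beta_1$; since $\beta_1\ge\beta_n$, the bound $\alpha_1+\beta_n\le -d_0$ does not imply $\alpha_1+\beta_1\le -d_0$, so $\Phi\ge nd_0$ does not follow. Moreover, with biproper $P,Q$ the entry $(PB_kQ)_{1n}$ is a $\KK(t)^-$-combination of the $(B_k)_{i'j'}$, and there is no reason its degree (if nonzero) is bounded below by $d_0$. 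Once the lower bound $\varDelta_\ell(B)\ge\ell d_0$ is established correctly (via blow-up), the iteration count is obtained more directly: $f_n$ starts at $nd$, drops by at least $1$ per iteration, and is bounded below (in the nondegenerate case) by $\varDelta_n(B)\ge nd_0$; the stopping criterion in step~3--2 handles the degenerate case using the same $\ell d_0$ bound. You do not need to track $\alpha_1$ or $\beta_n$ at all.
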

\begin{proof}
As mentioned in Section~\ref{subsub:building}, multiplication
of biproper matrices to feasible $(t^{\alpha})P,Q(t^{\beta})$
does not affect MVMP$_\ell$.
Therefore, several additional treatments
(e.g., to make the last $\ell \times \ell$ submatrix nc-nonsingular)
in the proof of Lemma~\ref{lem:(S,T)} and Theorem~\ref{thm:varDelta}
are not necessary.
Hence, it suffices to verify the correctness of the stopping criterion
in the case of $\varDelta_{\ell'}(B) = -\infty$ (step 3-2).
Suppose that $\deg \Det B[I,J] > -\infty$ for $I,J$ with $|I| = |J| = \ell$.
By Proposition~\ref{prop:blow-up_degDet}, for some integer $g > 0$, it holds
$\deg \Det B[I,J] =  \frac{1}{g} \deg \det B[I,J]^{\{g\}} \geq g \ell d_0/g \geq \ell d_0$.
This means that $\ell d_0$ is a finite lower bound of $\varDelta_\ell(B) > -\infty$.
%
\end{proof}

\subsection{Linear symbolic monomial matrices}

\subsubsection{Duality theorem}
We provide a sharpening of Theorem~\ref{thm:varDelta} for $A[c]$.
By a {\em complete flag} in $\KK^n$
we mean an $n+1$ tuple of vector subspaces 
$U_0,U_1,\ldots,U_n$ of $\KK^n$ such that
\begin{equation*}
\{0\} = U_0 < U_1 < \cdots < U_n = \KK^n.
\end{equation*}
\begin{Thm}\label{thm:minmax_subdet}
	Let $A = \sum_{k=1}^mA_kx_k$ be a matrix in \eqref{eqn:A} and let $c \in \ZZ^m$.
	Then
	$\varDelta_\ell(A[c])$ is equal to the optimal values of the following three (equivalent) problems:
		\begin{eqnarray*}
		\mbox{\rm (MVMP$^\KK_{\ell}$)} \quad {\rm Min.} && - \sum_{i=n-\ell+ 1}^n \alpha_i- \sum_{j=n-\ell + 1}^n \beta_j \\
		{\rm s.t.} && \deg ((t^{\alpha})PA_kt^{c_k} Q(t^{\beta}))_{ij} \leq 0 \quad (i,j \in [n], k \in [m]), \\
		&& \alpha, \beta \in \ZZ^n_{\downarrow}, P,Q \in GL_n (\KK).\\
				\mbox{\rm (MVMP$^{\rm F}_\ell$)}\quad {\rm Min.} && - \sum_{i=n- \ell+1}^n \alpha_i- \sum_{j=n -\ell +1}^n \beta_j \\
		{\rm s.t.} && \alpha_i + \beta_j \leq - c_k \quad
		(i,j \in [n],k \in [m]:A_k (U_i,V_j) \neq \{0\}), \\
		&& \alpha, \beta \in \ZZ^n_{\downarrow},\ \{U_i\},\{V_j\}:\mbox{complete flags in  $\KK^n$}. \\
			\mbox{\rm (MVMP$^{\rm A}_\ell$)}\quad {\rm Min.} &&  \sum_{i=1}^n \xi_i + \sum_{j=1}^n \eta_j  + \ell \gamma\\
		{\rm s.t.} && \xi_i + \eta_j + \gamma \geq c_k \quad
		(i,j \in [n],k \in [m]:A_k (u_i,v_j) \neq 0), \\
		&& \{u_1,u_2,\ldots,u_n\},  \{v_1,v_2,\ldots,v_n\}: \mbox{bases of $\KK^n$},\\
		&&  \xi,\eta \in \ZZ^n_+, \gamma \in \ZZ.
	\end{eqnarray*}
 In addition, $\varDelta_{\rm max}(A[c]))$ is equal to the optimal value of
\begin{eqnarray*}
   \mbox{\rm (MVMP$^{\rm F}_{\rm max}$)}  \quad {\rm Min.} && \sum_{i= 1}^n \xi_i  + \sum_{j= 1}^n  \eta_j  \\
    {\rm s.t.} && \xi_i + \eta_j \geq c_k \quad (i,j \in [n],k \in [m]: A_k(U_i,V_j)\neq \{0\}), \\
    && \xi,\eta \in \ZZ^{n}_{+\uparrow},\ \{U_i\},\{V_j\}: \mbox{complete flags in $\KK^n$}. 
\end{eqnarray*}
\end{Thm}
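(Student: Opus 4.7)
The plan is to establish the theorem in three stages: (A) prove that the optimum of MVMP$^\KK_\ell$ equals $\varDelta_\ell(A[c])$; (B) derive MVMP$^{\rm F}_\ell$ and MVMP$^{\rm A}_\ell$ as equivalent reformulations of MVMP$^\KK_\ell$; (C) obtain the MVMP$^{\rm F}_{\max}$ formula via the max-over-$\ell$ saddle-point argument from the proof of Theorem~\ref{thm:varDelta}.

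For (A), weak duality is immediate from Theorem~\ref{thm:varDelta} since $GL_n(\KK)\subseteq GL_n(\KK(t)^-)$ makes MVMP$^\KK_\ell$ a restriction of MVMP$_\ell$. For achievability, I would take an optimal MVMP$_\ell$ solution $(P,Q,\alpha,\beta)$ and substitute the constant parts $P^{(0)},Q^{(0)}\in GL_n(\KK)$ (both invertible by the definition of biproperness). The objective is preserved since it depends only on $\alpha,\beta$. Feasibility exploits the monomial structure of $A[c]$ crucially: $PA_kQ$ is a proper matrix with leading coefficient $P^{(0)}A_kQ^{(0)}$, so whenever $(P^{(0)}A_kQ^{(0)})_{ij}\ne 0$ one has $\deg(PA_kQ)_{ij}=0$, and the MVMP$_\ell$ inequality $\alpha_i+c_k+\beta_j+\deg(PA_kQ)_{ij}\le 0$ collapses to the MVMP$^\KK_\ell$ inequality $\alpha_i+\beta_j\le -c_k$.

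For (B), MVMP$^\KK_\ell=$ MVMP$^{\rm F}_\ell$ comes from the basis-to-flag correspondence $u_i:=P^\top e_i$, $v_j:=Qe_j$, $U_i:=\vecspan\{u_1,\ldots,u_i\}$, $V_j:=\vecspan\{v_1,\ldots,v_j\}$: unpacking the bilinear form gives $A_k(U_i,V_j)\ne\{0\}$ iff $(PA_kQ)_{i'j'}\ne 0$ for some $i'\le i$, $j'\le j$, and sortedness of $\alpha,\beta$ equates the two constraint families. For MVMP$^{\rm A}_\ell$, I would apply the shift $\xi_i:=\alpha_1-\alpha_i$, $\eta_j:=\beta_1-\beta_j$, $\gamma:=-\alpha_1-\beta_1$ to convert MVMP$^\KK_\ell$ feasible points to MVMP$^{\rm A}_\ell$ feasible points; the objectives differ by $\sum_{i=1}^{n-\ell}(\alpha_1-\alpha_i)+\sum_{j=1}^{n-\ell}(\beta_1-\beta_j)\ge 0$, which vanishes under the complementary-slackness condition (C1) from the proof of Theorem~\ref{thm:varDelta}. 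Conversely, sorting $\xi,\eta$ monotone increasing and setting $\alpha_i:=-\xi_i$, $\beta_j:=-\eta_j-\gamma$ gives an MVMP$^\KK_\ell$ solution with objective at most the MVMP$^{\rm A}_\ell$ objective (using $\xi,\eta\ge 0$), closing the cycle.

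For (C), $\varDelta_{\max}(A[c])=\max_\ell\varDelta_\ell(A[c])$ combined with the saddle-point argument in the proof of Theorem~\ref{thm:varDelta} yields $\varDelta_{\max}(A[c])=\min\sum_i\max(0,-\alpha_i-\beta_i)$ over all MVMP$^{\rm F}$-feasible tuples; passing to the MVMP$^{\rm F}_{\max}$ form requires showing that at a saddle-point optimum one may take $\alpha,\beta\le 0$ via the objective-invariant shift $\alpha\to\alpha-c,\beta\to\beta+c$, after which $\xi:=-\alpha$, $\eta:=-\beta$ lie in $\ZZ^n_{+\uparrow}$ and the objective $\sum\max(0,-\alpha_i-\beta_i)$ collapses to $\sum\xi+\sum\eta$. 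The main obstacle I foresee is this conversion to nonpositive $\alpha,\beta$ (clean when the saddle-point has $\alpha_1+\beta_1\le 0$, requiring slightly more care otherwise), together with the bookkeeping for integrality of $\gamma$ in the MVMP$^{\rm A}_\ell$ equivalence; beyond these, the proof is a clean composition of the truncation argument of stage (A) and results already established in Theorem~\ref{thm:varDelta}.
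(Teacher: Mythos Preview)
Your stages (A) and (B) are essentially identical to the paper's proof. The paper isolates your truncation argument in (A) as a separate lemma (Lemma~\ref{lem:trick}): expanding $P=P^{(0)}+t^{-1}P'$, $Q=Q^{(0)}+t^{-1}Q'$ and reading off the top-degree term shows that if $(t^\alpha)P,Q(t^\beta)$ is feasible for $A[c]$ then so is $(t^\alpha)P^{(0)},Q^{(0)}(t^\beta)$, exactly as you argue. The flag and basis translations in (B) also match the paper's derivation of MVMP$^{\rm F}_\ell$ and MVMP$^{\rm A}_\ell$; in particular the paper invokes the same (C1) normalization $\alpha_1=\cdots=\alpha_{n-\ell}$, $\beta_1=\cdots=\beta_{n-\ell}$ before defining $\xi_i:=\alpha_1-\alpha_i$, $\eta_j:=\beta_1-\beta_j$, $\gamma:=-\alpha_1-\beta_1$.

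For stage (C), however, you are taking an unnecessarily circuitous route and the obstacle you flag is self-inflicted. The paper does \emph{not} redo the saddle-point argument; it simply applies the truncation lemma directly to the formulation MVMP$_{\rm max}$ already established in Theorem~\ref{thm:varDelta}. That formulation comes with the nonnegativity $\xi,\eta\in\ZZ^n_+$ built in, so there is nothing to prove about making $\alpha,\beta$ nonpositive. After truncating $P,Q$ to $P^{(0)},Q^{(0)}\in GL_n(\KK)$ (feasibility preserved by the same argument as in (A), objective unchanged), one sorts $\xi,\eta$ increasingly by permuting rows of $P^{(0)}$ and columns of $Q^{(0)}$, and then passes to flags as in (B). This yields MVMP$^{\rm F}_{\rm max}$ in one line. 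Your proposed shift $\alpha\to\alpha-c$, $\beta\to\beta+c$ does not obviously achieve $\alpha,\beta\le 0$ simultaneously, and while the difficulty can be overcome, the whole detour is avoided by using MVMP$_{\rm max}$ as the starting point.
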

This theorem generalizes 
the duality theorem of Iwamasa~\cite{Iwamasa_degdet} for  
$\delta_{\ell}(A[c]) (= \Delta_{\ell}(A[c]))$ of $2 \times 2$-partitioned matrix $A$.
Note that $\varDelta_{\rm max}(A[c])$ is also given 
by unlisted problems MVMP$^{\KK}_{\rm max}$ and MVMP$^{\rm A}_{\rm max}$:
The former is obtained from MVMP$_{\rm max}$
by replacing $GL_n(\KK(t)^-)$
with $GL_n(\KK)$.
The latter is obtained from MVMP$^{\rm A}_\ell$ by omitting $\ell,\gamma$, which
is nothing but the LP-dual of the weighted bipartite matching problem if the bases are fixed.
Note also that these problems provide a good characterization for
$\varDelta_{\ell}(A[c])$ and 
$\varDelta_{\rm max}(A[c])$, if the bit-complexity 
of the bases of flags (or $P,Q$) is polynomially bounded.
%

%
%

This theorem is a consequence of
the following observation.
%
\begin{Lem}\label{lem:trick}
	For $\alpha,\beta \in \ZZ^n$ and $P,Q \in GL_n(\KK(t)^-)$,
	if $(t^{\alpha})P,Q(t^{\beta})$ is feasible for $A[c]$,
	then so is $(t^{\alpha})P^{(0)},Q^{(0)}(t^{\beta})$.
\end{Lem}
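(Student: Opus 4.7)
The plan is to observe that passing from the biproper matrices $P, Q$ to their constant parts $P^{(0)}, Q^{(0)}$ (which lie in $GL_n(\KK)$ by the biproperness characterization recalled in the Notation section) can only lower, never raise, the degree of each entry of $PA_kQ$; since the constraints in MVMP$_\ell$ (applied to $A[c]$) are upper bounds on those degrees, feasibility is preserved.

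Concretely, I would expand $P = P^{(0)} + P^{(-1)}t^{-1} + P^{(-2)}t^{-2} + \cdots$ and $Q = Q^{(0)} + Q^{(-1)}t^{-1} + \cdots$ as formal power series in $t^{-1}$ (as in \eqref{eqn:formal_power_series}), using that $P, Q$ are proper. Since each $A_k$ is a matrix over $\KK$, the product $P A_k Q$ is again proper, and collecting terms by degree gives
\[
P A_k Q = P^{(0)} A_k Q^{(0)} + \bigl(\text{terms of degree} \le -1\bigr).
\]
Hence, entrywise, the degree-$0$ coefficient of $(PA_kQ)_{ij}$ equals $(P^{(0)} A_k Q^{(0)})_{ij}$. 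Therefore
\[
\deg (P^{(0)} A_k Q^{(0)})_{ij} \le \deg (P A_k Q)_{ij},
\]
because either $(P^{(0)} A_k Q^{(0)})_{ij}\ne 0$, in which case both sides equal $0$; or $(P^{(0)} A_k Q^{(0)})_{ij}=0$, in which case the left side is $-\infty$.

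Multiplying by the scalar $t^{\alpha_i + c_k + \beta_j}$ and using the feasibility of $(t^{\alpha})P, Q(t^{\beta})$, I would conclude
\[
\deg \bigl((t^{\alpha}) P^{(0)} A_k t^{c_k} Q^{(0)} (t^{\beta})\bigr)_{ij}
= \alpha_i + c_k + \beta_j + \deg(P^{(0)}A_kQ^{(0)})_{ij}
\le \alpha_i + c_k + \beta_j + \deg(PA_kQ)_{ij} \le 0
\]
for all $i, j \in [n]$ and $k \in [m]$, which is exactly the feasibility of $(t^{\alpha}) P^{(0)}, Q^{(0)}(t^{\beta})$. I do not anticipate any real obstacle; the only point to be careful about is that $P^{(0)}$ and $Q^{(0)}$ must actually be admissible in the constraint, which is automatic since $P, Q \in GL_n(\KK(t)^-)$ forces $P^{(0)}, Q^{(0)} \in GL_n(\KK)$ and the latter sits inside $GL_n(\KK(t)^-)$.
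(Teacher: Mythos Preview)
Your proof is correct and follows essentially the same approach as the paper: both arguments expand $P$ and $Q$ about their constant terms, identify $P^{(0)}A_kQ^{(0)}$ as the degree-$0$ coefficient of the proper matrix $PA_kQ$, and conclude that the monomial-shifted entry $\bigl((t^{\alpha})P^{(0)}A_kt^{c_k}Q^{(0)}(t^{\beta})\bigr)_{ij}$ has degree at most that of $\bigl((t^{\alpha})PA_kt^{c_k}Q(t^{\beta})\bigr)_{ij}\le 0$. Your explicit formulation of the intermediate inequality $\deg(P^{(0)}A_kQ^{(0)})_{ij}\le\deg(PA_kQ)_{ij}$ and the remark that $P^{(0)},Q^{(0)}\in GL_n(\KK)\subseteq GL_n(\KK(t)^-)$ are clean touches, but the substance is identical to the paper's proof.
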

\begin{proof}
	Represent $P$ and $Q$ as $P = P^{(0)} + t^{-1}P'$ and $Q = Q^{(0)}+ t^{-1}Q'$, where
	$P^{(0)},Q^{(0)} \in GL_n(\KK)$ and $P',Q'$ are proper matrices.
	The feasibility of  $(t^{\alpha})P,Q(t^{\beta})$ is equivalent to that
	\[
	(P^{(0)}A_kQ^{(0)})_{ij} t^{\alpha_{i}+ \beta_j + c_k} +
	(P'A_kQ^{(0)} + P^{(0)}A_kQ')_{ij} t^{\alpha_{i}+ \beta_j + c_k-1} +
	(P'A_kQ')_{ij} t^{\alpha_{i}+ \beta_j + c_k  -2}
	\]
	is nonpositive degree for all $i,j,k$.
	Necessarily
	$(P^{(0)}A_kQ^{(0)})_{ij} t^{\alpha_{i}+ \beta_j + c_k}$ has nonpositive degree.
	This is feasibility of $(t^{\alpha})P^{(0)},Q^{(0)}(t^{\beta})$.
\end{proof}
\begin{proof}[Proof of Theorem~\ref{thm:minmax_subdet}]
By the above lemma, 
we can assume that solutions in MVMP$_\ell$ are the form of
$((t^{\alpha})P,Q(t^{\beta}))$ with $P,Q \in GL_n(\KK)$, which equals MVMP$^\KK_\ell$.
For such $((t^{\alpha})P,Q(t^{\beta}))$, 
define $U_i$ (resp. $V_j$) as the vector subspace spanned
by the first $i$ rows (resp. $j$ columns) of $P$ (resp. $Q$).
If $A_k(U_i,V_j) \neq \{0\}$,
then $PA_kQ[[i],[j]]$ has a nonzero entry $(PA_kQ)_{i'j'} \neq 0$
for $i' \leq i, j' \leq j$, and hence $\alpha_i + \beta_j \leq \alpha_{i'} + \beta_{j'} \leq - c_k$. Hence $\alpha, \beta, \{U_i\}, \{V_j\}$ is a feasible solution of MVMP$^{\rm F}_\ell$.

Let $\alpha, \beta, \{U_i\}, \{V_j\}$  be a feasible solution of MVMP$^{\rm F}_\ell$.
Choose bases $\{u_i\}, \{v_j\}$ of $\KK^n$
such that $u_1,u_2,\ldots,u_i$ span $U_i$ and $v_1,v_2,\ldots,v_j$ span $V_j$.
Let $P := (u_1\ u_2\ \cdots \ u_n)^{\top}$ and $Q := (v_1\ v_2\ \cdots \ v_n)$.
If $((t^{\alpha})PA_kQ(t^{\beta}))_{ij} = u_i^{\top}A_k v_j t^{\alpha_i + \beta_j}\neq 0$,
then $A_k(U_i,V_j) \neq \{0\}$, and hence $\alpha_i + \beta_j \leq - c_k$.
This means that $((t^{\alpha})P,Q(t^{\beta}))$ is feasible to MVMP$^{\KK}_\ell$.

MVMP$^{\rm A}_\ell$ is deduced as follows. As above,
consider a feasible solution $((t^{\alpha})P,Q(t^{\beta}))$ in MVMP$^{\KK}_\ell$.
We can assume that $\alpha_1 = \alpha_2 = \cdots = \alpha_{n-\ell} := \gamma_{1}$
and
$\beta_1 = \beta_2 = \cdots = \beta_{n-\ell} := \gamma_{2}$.
Then the objective value is written as $- \sum_{i=1}^n \alpha_i - \sum_{j=1}^n \beta_j + (n-\ell) (\gamma_1 + \gamma_2)$.
Define new variables $\xi,\eta,\gamma$ by
$\alpha_i = \gamma_1 - \xi_i$, $\beta_j = \gamma_2 - \eta_j$, and $\gamma := \gamma_1 + \gamma_2$.
Consider, as bases of $\KK^n$, row vectors $v_1,v_2,\ldots,v_n$ of $P$ and column vectors  $u_1,u_2,\ldots,u_n$ of $Q$.
By using them, we obtain MVMP$^{\rm A}_\ell$.

By applying Lemma~\ref{lem:trick} to MVMP$_{\rm max}$ similarly, 
we obtain MVMP$^{\rm F}_{\rm max}$. 
\end{proof}

We give useful observations for MVMPs in Theorem~\ref{thm:minmax_subdet}.
\begin{Lem}\label{lem:MVMP}
For MVMPs in Theorem~\ref{thm:minmax_subdet}, 
the following holds:
\begin{itemize}
\item[{\rm (1)}] 
	$\ZZ$ can be replaced by $\QQ$.
\item[{\rm (2)}] Suppose that each $A_k$ is symmetric or skew-symmetric.
	We can assume that $\{U_i\} = \{V_j\}$, $\alpha = \beta$, and $\xi = \eta$, with $\ZZ$ replaced by $\ZZ/2$ or $\QQ$. 
\end{itemize}
\end{Lem}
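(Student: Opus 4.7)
My plan is to treat the two parts separately.

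For (1), I fix all combinatorial data of the given MVMP — the complete flags in MVMP$^{\rm F}_\ell$ or MVMP$^{\rm F}_{\rm max}$, the ordered bases in MVMP$^{\rm A}_\ell$, or the matrices $P, Q \in GL_n(\KK)$ in MVMP$^\KK_\ell$. The residual problem in the continuous parameters $\alpha, \beta$ (or $\xi, \eta, \gamma$) is a linear program whose active constraints are all of the forms $\alpha_i + \beta_j \le -c_k$, $\xi_i + \eta_j + \gamma \ge c_k$, or monotonicity $\alpha_i \ge \alpha_{i+1}$; each such row has at most two $\pm 1$ coefficients in a bipartite/network incidence pattern, so the combined coefficient matrix is totally unimodular. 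Since $c_k \in \ZZ$, the LP attains an integer optimum at a vertex, so the $\QQ$-relaxation has the same value as the $\ZZ$-problem, proving (1).

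For (2), I use the involution $\tau : (\alpha, \beta, \{U_i\}, \{V_j\}) \mapsto (\beta, \alpha, \{V_j\}, \{U_i\})$ on feasible solutions of MVMP$^{\rm F}_\ell$, and its obvious analogues on MVMP$^{\rm A}_\ell$, MVMP$^\KK_\ell$, and MVMP$^{\rm F}_{\rm max}$. Because each $A_k$ is symmetric or skew-symmetric, $A_k(U_i, V_j) = \{0\}$ iff $A_k(V_j, U_i) = \{0\}$, so $\tau$ preserves feasibility, and it clearly preserves the objective. Starting from any feasible $(\alpha, \beta, \{U_i\}, \{V_j\})$, averaging with its $\tau$-image gives $\alpha'_i = \beta'_i := (\alpha_i + \beta_i)/2$ with entries in $\tfrac{1}{2}\ZZ$ (still nonincreasing, hence admissible) and the same objective. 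It remains to produce a single flag $\{W_i\}$ making $(\alpha', \alpha', \{W_i\}, \{W_i\})$ feasible; the parallel basis-level argument yields $\xi = \eta$ in MVMP$^{\rm A}$.

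The main obstacle is precisely this final single-flag (or single-basis) construction: naive choices such as $W_i = U_i$, $W_i = V_i$, or $W_i = U_i \cap V_i$ either violate the dimension condition $\dim W_i = i$ or offer no control on $A_k(W_i, W_j)$ from the given constraints, which only bound the pairings $A_k(U_i, V_j)$ and $A_k(U_j, V_i)$. My approach will pass to the Euclidean building for $GL_n(\KK(t))$ of Section~\ref{subsub:building}: $\tau$ acts on it as an isometry, so the midpoint of a point and its $\tau$-image lies in the $\tau$-fixed symmetric locus and furnishes the required symmetric witness with coordinates in $\tfrac{1}{2}\ZZ$. Feasibility at the midpoint follows by adding the two inequalities $\alpha_i + \beta_j \le -c_k$ and $\alpha_j + \beta_i \le -c_k$ (coming from the point and its $\tau$-image) and dividing by $2$; once this is established, rerunning the TU/integrality argument from (1) on the symmetrized LP upgrades the $\QQ$ statement to the claimed $\ZZ/2$ statement.
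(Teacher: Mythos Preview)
Part (1) is correct and coincides with the paper's argument: with the flags (or bases, or $P,Q$) fixed, the residual program in $\alpha,\beta$ (or $\xi,\eta,\gamma$) is the LP dual of a bipartite $\ell$-matching problem, hence totally unimodular, so the $\QQ$- and $\ZZ$-optima agree.

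Part (2) has a real gap, precisely at the point you flag as the ``main obstacle.'' Your feasibility check for the midpoint --- ``add $\alpha_i+\beta_j\le -c_k$ and $\alpha_j+\beta_i\le -c_k$ and divide by $2$'' --- is not valid as written: those two inequalities are guaranteed only under the hypotheses $A_k(U_i,V_j)\neq\{0\}$ and $A_k(U_j,V_i)\neq\{0\}$, which refer to the \emph{original} pair of flags. What you must check is the constraint triggered by $A_k(W_i,W_j)\neq\{0\}$ for your \emph{new} flag $\{W_i\}$, and you have given no construction of $W_i$ that would let you deduce the former conditions from the latter. Invoking ``the midpoint in the Euclidean building'' does not fill this in; you would still need to identify what lattice (hence what $P$ and $\alpha$) the midpoint is, and to argue that the feasibility region is geodesically convex --- neither of which you do.

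The paper's route is shorter and supplies exactly the missing ingredient: any two complete flags $\{U_i\},\{V_j\}$ admit a \emph{common adapted basis} $u_1,\dots,u_n$, i.e., every $U_i$ and every $V_j$ is spanned by a subset of $\{u_1,\dots,u_n\}$. (In building terms this is the axiom that any two chambers lie in a common apartment; concretely it is Gaussian elimination.) One then passes to MVMP$^{\rm A}_\ell$ with the \emph{same} basis on both sides. Now (skew-)symmetry of $A_k$ gives $A_k(u_i,u_j)\neq 0 \Leftrightarrow A_k(u_j,u_i)\neq 0$, so whenever the constraint $\xi_i+\eta_j+\gamma\ge c_k$ is active, so is $\xi_j+\eta_i+\gamma\ge c_k$; averaging yields $(\xi_i+\eta_i)/2+(\xi_j+\eta_j)/2+\gamma\ge c_k$, and $(\xi+\eta)/2$ is feasible with the same objective. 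Translating back to MVMP$^{\rm F}_\ell$ produces the single flag. Your building-midpoint idea, carried out correctly, would in fact land in a common apartment and reduce to this same computation --- but the common-basis observation is the step that makes it work.
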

\begin{proof}
(1). Observe that 
MVMP$_\ell^{\rm F}$ is the integer program of 
the LP dual to 
the maximum weight $\ell$-matching problem in a bipartite graph, provided bases are fixed.
It is known (via the Hungarian method) 
that this LP admits an integral optimal solution.

(2). For two flags $\{U_i\},\{V_j\}$, there is 
a basis $\{ u_1,u_2,\ldots,u_n\}$ such 
that $U_i, V_j$ are spanned by some subsets of $\{ u_1,u_2,\ldots,u_n\}$.
Consequently,
we may assume $u_i  = v_i$ in MVMP$^{\rm A}_\ell$.
Since $A_k$ is symmetric or skew-symmetric, 
$A_k(u_i,u_j) \neq 0$ if and only if $A_k(u_j,u_i) \neq 0$.
In this case, it holds $\xi_i + \eta_j + \gamma \geq c_k$ and $\xi_j+ \eta_i + \gamma \geq c_k$,
and hence $(\xi_i+\eta_i)/2 + (\xi_j+\eta_j)/2 \geq c_k$.
Thus, we can replace $\xi,\eta$ by $(\xi+\eta)/2, (\xi+\eta)/2$. 
Translate it to MVMP$^{\rm F}_\ell$ to obtain the claim.
Similar for MVMP$^{\rm F}_{\rm max}$.
\end{proof}

\begin{Rem}[Spherical building]
Again, MVMP$_{\ell}^{\rm F}$ and MVMP$_{\ell}^{\rm A}$ have a natural interpretation in the theory of building.
The simplicial complex of all flags of vector subspaces is known as the {\em spherical building} 
for $GL_n(\KK)$.
MVMP$_{\ell}^{\rm F}$ and MVMP$_{\ell}^{\rm A}$ are 
viewed as optimization over (the Euclidean cone of) the geometric realization of the spherical building, 
where a point in this space is a conical combination of flags, as in the variables of MVMP$_{\ell}^{\rm F}$.
Further, bases of $\KK^n$ determine the family of subcomplexes of the building, called {\em apartments}. 
Apartments are isometric to $\RR^n$ 
and the building is their union. 
MVMP$_{\ell}^{\rm A}$ is viewed as
a multistage optimization of selecting an apartment and solving a linear program 
($=$ LP-dual of maximum-weight $\ell$-matching) on it. 
\end{Rem}


\subsubsection{Polyhedral combinatorics on $\varDelta_\ell$}

Here we explain that $\varDelta_{\ell}(A[c])$ 
is given by linear optimization over some polytope.
Then the above duality can be viewed as a new type of linear programming duality.
Before that, we first consider $\delta_\ell(A[c])$.
For a polynomial $p(x_1,x_2,\ldots,x_m)= \sum_{u \in \ZZ_+^n} a_{u}x_1^{u_1}x_2^{u_2} \cdots x^{u_m}_m$,
let $\expvec p \subseteq \ZZ_+^m$ denote the set of
all integer vectors $u$ with  $a_{u} \neq 0$.
For $\ell \in [0,n]$, 
let ${\cal P}_\ell(A) \subseteq \RR^m_+$ be the polytope defined by
\begin{equation}\label{eqn:P_ell(A)}
	{\cal P}_\ell(A) :=  \Conv \bigcup_{I,J \in {[n] \choose \ell}} \expvec \det A[I,J],
\end{equation}
where $\det A[\emptyset, \emptyset] :=1$ and
${\cal P}_0(A)$ is the single point $\{(0, 0, \dots, 0)\}$.
Notice that ${\cal P}_n(A)$ 
is nothing but the {\em Newton polytope} of polynomial $\det A[c]$.
Further, let ${\cal P}(A)$ denote the polytope defined 
by taking the union over all subsets $I,J \subseteq [n]$ in (\ref{eqn:P_ell(A)}), 
or equivalently, ${\cal P}(A) := \Conv \bigcup_{\ell \in [0,n]} {\cal P}_\ell(A)$.
We call ${\cal P}(A)$ the {\em independent set polytope} for $A$.
For special symbolic matrices $A$ in Section~\ref{subsec:relationstoCO},
${\cal P}(A)$ can realize the following polytopes in combinatorial optimization.\footnote{For seeing (v) and (vi), we remark a general property 
that the maximum degree $\delta_{\rm max}$ 
of skew-symmetric rational matrix $B$ is always attained 
by a principal submatrix.
This fact can be proved e.g., via the valuated bimatroid property of $(I,J) \mapsto \deg \deg B[I,J]$: 
If $B[I,J]$ attains the maximum degree, then so does $B[J,I]$ by skew-symmetricity. By repeatedly applying the exchange axiom (VB-1), (VB-2)~\cite[Section 5.2.5]{MurotaMatrix} 
for $(I,J), (J,I)$, we eventually obtain a principal submatrix attaining the maximum degree.}.
\begin{itemize}
\item[(i)] The matching polytope for a bipartite graph.
\item[(ii)] The independent set polytope for a linear matroid.
\item[(iii)] The common independent set polytope for two linear matroids.
\item[(iv)] The matching polytope for a $2 \times 2$ partitioned matrix.
\item[(v)] Twice the matching polytope for a nonbipartite graph.
\item[(vi)] Twice the linear matroid matching polytope.
\end{itemize}

Observe that the maximum degrees $\delta_{\ell}$, $\delta_{\rm max}$ 
of minors of $A[c]$ are given
by linear optimizations over ${\cal P}_{\ell}(A)$:
\begin{equation}\label{eqn:delta_ell}
\delta_\ell (A[c]) = \max \{ c^{\top} u \mid u \in {\cal P}_\ell (A)\},\  \delta_{\rm max} (A[c])= \max \{ c^{\top} u \mid u \in {\cal P}(A)\}.
\end{equation}
Particularly, (weighted) Edmonds' problem can be understood as linear optimization over polytope ${\cal P}(A)$.

Hirai and Ikeda~\cite{HiraiIkeda} pointed out an analogous interpretation of $\deg \ncdet A[c]=\varDelta_n(A[c])$.
We here mention it for general $\ell$.
Recall the $d$-th blow up $A^{\{d\}}$ of $A$ in (\ref{eqn:blow-up}),
and consider its ordinary determinant $\det A^{\{d\}}$ that is a polynomial of variables
$x_{k,ij}$ for $k \in [m], i,j \in [n]$.
Then $\expvec \det A^{\{d\}}$ consists of $md^2$-dimensional integer vectors $z = (z_{k,ij})_{k \in [m],i,j \in [d]}$.
For such a vector $z= (z_{k,ij})_{k \in[m],i,j \in [d]}$, define
an $m$-dimensional vector  $\proj_d(z)\in \QQ^m$ by
\begin{equation}
	\proj_d (z)_k := \frac{1}{d} \sum_{i,j \in [d]} z_{k,ij} \quad (k \in [m]).
\end{equation}
For $\ell \in [0,n]$, 
define polytope ${\cal Q}_\ell (A) \subseteq \RR^m$ by
\begin{equation}\label{eqn:Q_ell(A)}
{\cal Q}_\ell(A) := \Conv \bigcup_{d=1,2,\ldots,} \bigcup_{I,J \subseteq {[n] \choose \ell}} \proj_d \expvec \det A[I,J]^{\{d\}}.
\end{equation}
Hirai and Ikeda~\cite{HiraiIkeda} call ${\cal Q}_n(A)$ 
the {\em nc-Newton polytope} for $A$.
As in ${\cal P}(A)$, 
we define ${\cal Q}(A) (= \Conv \bigcup_{\ell \in [0,n]} {\cal Q}_\ell(A))$ by taking the union over all $I,J \subseteq [n]$ in (\ref{eqn:Q_ell(A)}).
We call ${\cal Q}(A)$ the {\em nc-indenpendent set polytope} for $A$. 
\begin{Thm}\label{thm:polyhedral}
	\begin{itemize}
		\item[{\rm (1)}] $\varDelta_\ell(A[c]) = \max \{ c^{\top} u \mid u \in {\cal Q}_\ell(A)\}$ and  $\varDelta_{\rm max}(A[c]) = \max \{ c^{\top} u \mid u \in {\cal Q}(A)\}$.
		\item[{\rm (2)}] ${\cal Q}_\ell(A)$ and ${\cal Q}(A)$ 
  are integral polytopes in $[0,n]^m$.
		\item[{\rm (3)}] An integral vector $u$ maximizing $c^{\top} u$ over ${\cal Q}_{\ell}(A)$ is obtained in polynomial time.
	\end{itemize}
\end{Thm}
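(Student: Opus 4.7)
The plan is to handle the three parts in order, relying on the blow-up formula (Proposition~\ref{prop:blow-up_degDet}) for (1), direct bounds for the containment portion of (2), and the Hungarian \textbf{Deg-Det} algorithm for the integrality portion of (2) together with (3).

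For (1), I apply Proposition~\ref{prop:blow-up_degDet} to each submatrix $A[c][I,J]$: $\deg \Det A[c][I,J] = \max_d \frac{1}{d}\deg \det A[c][I,J]^{\{d\}}$. Since $A[c]^{\{d\}} = \sum_{k}(A_k\otimes X_k)\,t^{c_k}$, every monomial in $\det A[I,J]^{\{d\}}$ indexed by $z \in \expvec \det A[I,J]^{\{d\}}$ carries $t$-exponent $\sum_{k,i,j}c_k z_{k,i,j} = d\,c^{\top}\proj_d(z)$. Dividing by $d$ and maximizing over $z$, $d$, $I$, $J$ gives the formula for $\varDelta_\ell$; for $\varDelta_{\rm max}$, additionally maximize over $\ell \in [0,n]$ and use that linear optimization over a set equals that over its convex hull.

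For (2), containment in $[0,n]^m$ is direct: every generator $\proj_d(z)$ satisfies $\sum_k \proj_d(z)_k = \ell$ (by $x$-homogeneity of $\det A[I,J]^{\{d\}}$ of degree $\ell d$) and $0 \le \proj_d(z)_k \le \ell \le n$ (since for each fixed $k,i$, $\sum_j z_{k,i,j} \le \sum_{k',j}z_{k',i,j} = \ell$). Integrality then reduces to (3): the existence of an integer maximizer of $c^{\top} u$ over the rational polytope $\mathcal{Q}_\ell(A)$ for every integer $c$ forces every vertex to be integer, and $\mathcal{Q}(A) = \Conv \bigcup_\ell \mathcal{Q}_\ell(A)$ is then the convex hull of a union of sets of integer points, hence also integral. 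For (3), I would run the Hungarian \textbf{Deg-Det} algorithm to obtain in strongly polynomial time a dual-optimal integer $(\xi,\eta,\gamma,\{u_i\},\{v_j\})$ of $\mathrm{MVMP}^{\rm A}_\ell$ (integrality from Lemma~\ref{lem:MVMP}(1)), together with $P,Q \in GL_n(\KK)$ whose row/column spans realize the flags above and for which, by termination and Lemma~\ref{lem:optimality}(1), the lower-right $\ell\times\ell$ block $A^*$ of $((t^{\alpha})PA[c]Q(t^{\beta}))^{(0)}$ is nc-nonsingular. Theorem~\ref{thm:blow-up} then yields some $d \le n-1$ with $\det (A^*)^{\{d\}} \neq 0$, and a surviving monomial produces $z \in \expvec \det A[I,J]^{\{d\}}$ whose projection $u := \proj_d(z)$ attains $c^{\top}u = \varDelta_\ell(A[c])$ by~(1).

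The main obstacle is producing an \emph{integer} primal witness (rather than a merely rational one) lying in $\mathcal{Q}_\ell(A)$: an arbitrary surviving monomial only gives a rational vector $\proj_d(z)$, and obtaining an integer representative requires exploiting the natural $S_d \times S_d$-symmetry on the blow-up variables (simultaneous row/column relabeling of the $X_k$, which permutes the $z_{k,i,j}$ without changing $|\det|$) together with the tight-constraint structure from complementary slackness with the integer dual. This is exactly where the cancellation phenomena described in Section~\ref{subsub:nonbipartite} intervene, and it is the combinatorial content of the Hungarian \textbf{Deg-Det} algorithm --- which maintains a primal--dual pair throughout and rounds to an integer vertex at termination --- that resolves the issue in polynomial time.
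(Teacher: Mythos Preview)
Your argument for (1) is correct and essentially identical to the paper's (the paper just cites the case $\ell=n$ from \cite{HiraiIkeda} and reduces to it, whereas you unpack that computation). Your containment argument in (2) is also fine.

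The genuine gap is in (3), and since you make (2) depend on (3), it propagates. You correctly observe that an arbitrary monomial of $\det(A^*)^{\{d\}}$ only yields a \emph{rational} point $\proj_d(z)\in\mathcal{Q}_\ell(A)$, and you then appeal to ``the Hungarian \textbf{Deg-Det} algorithm --- which maintains a primal--dual pair throughout and rounds to an integer vertex at termination.'' This is not what that algorithm does: as described in Section~\ref{subsub:Hungarian}, it is a purely dual procedure that outputs the values $\varDelta_\ell(A[c])$ and a dual certificate $(t^{\alpha})P,Q(t^{\beta})$, but no primal point of $\mathcal{Q}_\ell(A)$ at all. The primal--dual discussion of Section~\ref{subsub:primal-dual} is explicitly conditional (``Suppose that we have \ldots'') and is only instantiated for the special matrices of bipartite matching, matroid intersection, and $2\times2$ partitioned matrices. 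The $S_d\times S_d$ symmetry you invoke does not help either: averaging over the orbit still produces the same rational $\proj_d(z)$, and nothing in the complementary-slackness structure forces that average to be integral in general. So the step ``produce an integer primal witness'' is simply not carried out.

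The paper avoids this difficulty by decoupling (2) from (3). For (2), it observes that $\mathcal{Q}_\ell(A)=\Conv\bigcup_{I,J\in\binom{[n]}{\ell}}\mathcal{Q}_\ell(A[I,J])$, and each $\mathcal{Q}_\ell(A[I,J])$ is the nc-Newton polytope of the $\ell\times\ell$ matrix $A[I,J]$, already shown integral in \cite{HiraiIkeda}; a convex hull of integral polytopes is integral. With (2) in hand, (3) is obtained by a perturbation trick (also from \cite[Section~4.3]{HiraiIkeda}): for a large $K$ and $O(m)$ small integer perturbations $\epsilon$, the values $\varDelta_\ell(A[Kc+\epsilon])$ --- each computable in polynomial time by the algorithms already established --- determine the coordinates of the unique optimal vertex, which is integral by (2). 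No primal extraction from a blow-up is needed.
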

The meaning of polynomiality in (3) is the same as in Theorem~\ref{thm:main_IH}.
The case of $\ell = n$ is shown by \cite{HiraiIkeda}.
The proof is reduced to this case.
\begin{proof}
(1). $\varDelta_\ell(A[c]) = \max_{I,J \in {[n] \choose \ell}} \deg \Det A[c][I,J] =  \max_{I,J \in {[n] \choose \ell}} \max \{ c^{\top} u \mid u \in {\cal Q}_\ell(A[I,J])\} = \max  \{ c^{\top} u \mid u \in {\cal Q}_\ell(A)\}$.

(2). This follow from the fact that ${\cal Q}_{\ell}(A)$ is written as 
$\Conv \bigcup_{I,J \in  {[n] \choose \ell}} {\cal Q}_\ell(A[I,J])$.
Similar for ${\cal Q}(A)$.

(3). As in \cite[Section 4.3]{HiraiIkeda}, 
a maximizer $u$ can be identified from the values of $\varDelta_{\ell}(A[Kc + \epsilon])$
for a large constant $K> 0$ and $O(m)$ perturbed vectors $\epsilon$.
\end{proof}
In particular, the polyopes ${\cal Q}_\ell(A)$ and ${\cal Q}(A)$ are 
{\em integral} relaxations of ${\cal P}_\ell(A)$ and ${\cal P}(A)$, respectively:
$$
{\cal P}_\ell(A) \subseteq {\cal Q}_\ell(A),\quad {\cal P}(A) \subseteq {\cal Q}(A).
$$ 
Via relation $\delta_{\ell} = \varDelta_{\ell}$,
this relaxation is exact for the classes (i)--(iv). 
Although it is not exact for (v) and (vi),
we see in Section~\ref{sec:applications} that 
it equals the following well-known relaxations of (v) and (vi):
\begin{itemize}
\item[(v$^*$)] Twice the fractional matching polytope for a nonbipartite graph.
\item[(vi$^*$)] Twice the fractional linear matroid matching polytope.
\end{itemize}

%

We note an expected relation between $\mathcal{Q}_\ell(A)$ and $\mathcal{Q}(A)$:
\begin{Prop}\label{prop:intersection}
    $\mathcal{Q}_\ell(A) = \{u \in \mathcal{Q}(A) \mid \mathbf{1}^\top u = \ell\}$.
\end{Prop}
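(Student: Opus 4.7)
The plan is to split the proof into two directions and handle them asymmetrically. For the \emph{forward inclusion}, I will appeal to a homogeneity computation: $\det A[I,J]^{\{d\}}$ is the determinant of an $\ell d \times \ell d$ matrix whose entries are linear forms in $\{x_{k,ij}\}$, so it is a homogeneous polynomial of total degree $\ell d$. Thus every $z \in \expvec \det A[I,J]^{\{d\}}$ satisfies $\sum_{k,i,j} z_{k,ij} = \ell d$, giving $\mathbf{1}^\top \proj_d(z) = \ell$. This linear constraint passes to convex hulls, yielding $\mathcal{Q}_\ell(A) \subseteq \{u \in \mathcal{Q}(A) : \mathbf{1}^\top u = \ell\}$.

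For the \emph{reverse inclusion}, set $S := \{u \in \mathcal{Q}(A) : \mathbf{1}^\top u = \ell\}$. Both $S$ and $\mathcal{Q}_\ell(A)$ are polytopes with $\mathcal{Q}_\ell(A) \subseteq S$, so equality reduces to equality of support functions. By Theorem~\ref{thm:polyhedral}(1), the support function of $\mathcal{Q}_\ell(A)$ at $c \in \ZZ^m$ is $\varDelta_\ell(A[c])$; the forward inclusion gives the easy inequality, so it remains to show $\max_{u \in S} c^\top u \leq \varDelta_\ell(A[c])$. Fixing $u \in S$, I will expand via Carath\'eodory as $u = \sum_i \lambda_i u^{(i)}$ with $u^{(i)} \in \mathcal{Q}_{\ell_i}(A)$, $\lambda_i > 0$, and $\sum_i \lambda_i = 1$; the forward inclusion forces $\mathbf{1}^\top u^{(i)} = \ell_i$ and hence $\sum_i \lambda_i \ell_i = \ell$. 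The discrete concavity of $\ell' \mapsto \varDelta_{\ell'}(A[c])$ from Proposition~\ref{prop:concavity} then kicks in. Writing $\tilde\varDelta$ for the piecewise linear interpolation of this sequence, the chain
\[
c^\top u \;=\; \sum_i \lambda_i c^\top u^{(i)} \;\leq\; \sum_i \lambda_i \varDelta_{\ell_i}(A[c]) \;=\; \sum_i \lambda_i \tilde\varDelta(\ell_i) \;\leq\; \tilde\varDelta\!\left(\sum_i \lambda_i \ell_i\right) \;=\; \varDelta_\ell(A[c])
\]
closes the argument, using Theorem~\ref{thm:polyhedral}(1) at the first inequality and concavity of $\tilde\varDelta$ at the second.

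The main subtlety, and the step I expect to need the most care, is legitimising the final concavity step. A priori, some $\varDelta_{\ell'}(A[c])$ may equal $-\infty$, so $\tilde\varDelta$ is only defined on the integer interval $I^{*} := \{\ell' : \mathcal{Q}_{\ell'}(A) \neq \emptyset\}$. The interval property of $I^{*}$ is itself a direct consequence of Proposition~\ref{prop:concavity} (if both $\varDelta_{\ell-1}$ and $\varDelta_{\ell+1}$ are finite, the concavity inequality forces $\varDelta_\ell$ to be finite as well). Each $\ell_i$ lies in $I^{*}$ because $\mathcal{Q}_{\ell_i}(A) \ni u^{(i)}$, and since $\ell = \sum_i \lambda_i \ell_i$ is an integer in the convex hull of the $\ell_i$, it too lies in $I^{*}$; thus $\tilde\varDelta(\ell) = \varDelta_\ell(A[c])$ as used. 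Once this verification is in place, the rest of the proof is mechanical.
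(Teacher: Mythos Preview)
Your proof is correct and follows essentially the same route as the paper's: the forward inclusion by the homogeneity/degree count (which the paper dismisses as ``clear''), and the reverse inclusion by comparing support functions via a convex decomposition $u = \sum_i \lambda_i u^{(i)}$ with $u^{(i)} \in \mathcal{Q}_{\ell_i}(A)$, followed by the concavity of $\ell' \mapsto \varDelta_{\ell'}(A[c])$ from Proposition~\ref{prop:concavity}. Your treatment of the $-\infty$ case via the interval $I^{*}$ is a welcome clarification that the paper leaves implicit.
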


\begin{proof}
    The inclusion $\subseteq$ is clear.
    We prove the reverse inclusion by showing $\max \{c^\top u \mid u \in \mathcal{Q}_\ell(A)\} \ge \max \{c^\top u \mid u \in \mathcal{Q}(A),\, \mathbf{1}^\top u = \ell\}$ for any $c \in \ZZ^m$, where the left-hand side is equal to $\varDelta_\ell(A[c])$ by Lemma~\ref{thm:polyhedral}~(1).
    Take a maximizer $u^*$ of the right-hand side.
    Since $u^* \in {\cal Q}(A) = \Conv \bigcup_{\ell=0}^n {\cal Q}_\ell(A)$, the vector $u^*$ can be decomposed as $u^* = \sum_{h=0}^n \lambda_h u_h$, where $\lambda_h \ge 0$, $\sum_{h=0}^n \lambda_h = 1$, and $u_h \in \mathcal{Q}_h(A)$ for each $h$.
    Then, it holds that $\ell = \mathbf{1}^\top u^* = \sum_{h=0}^n \lambda_h \mathbf{1}^\top u_h = \sum_{h=0}^n h\lambda_h$ and $c^\top u^* = \sum_{h=0}^n \lambda_h c^\top u_h \le \sum_{h=0}^n \lambda_h \varDelta_h(A[c])$.
By concavity of $h \mapsto  \varDelta_h(A[c])$ 
(Proposition~\ref{prop:concavity}), we have
$c^\top u^* \leq \sum_{h=0}^n \lambda_h \varDelta_h(A[c]) \le \varDelta_{\sum_{h=0}^n \lambda_h h}(A[c]) =\varDelta_\ell(A[c])$ as required.
\end{proof}

\subsubsection{Algebraic Hungarian method}\label{subsub:Hungarian}
Here we specialize {\bf Deg-SubDet} for $A[c]$ to obtain a strongly polynomial time algorithm.
This is viewed as 
an algebraic generalization of the classical Hungarian method 
for weighted bipartite matching problems, 
and works as a unified primal-dual framework. 

We introduce notation to describe the algorithm. 
For $\alpha \in \ZZ^n_{\downarrow}$,
we define an ordered partition
$(I_1, I_2,\ldots,I_\mu)$
of $[n]$ such that for any indices $i,j \in [n]$ it holds
\begin{itemize}
\item $\alpha_i = \alpha_j$ if and only if both $i,j$ belong to $I_{a}$ for some $a \in [\mu]$, and
\item $\alpha_i > \alpha_j$ if $i \in I_a$, $j \in I_{a+1}$ for $a \in [\mu-1]$.
\end{itemize}

When applying {\bf Deg-Det} to $A[c]$, 
by Lemma~\ref{lem:trick}, 
we will always keep feasible $(t^{\alpha})P,Q(t^{\beta})$
such that $P,Q \in GL_n(\KK)$ and $p,q \in \ZZ_\downarrow^n$.
For such a feasible solution $(t^{\alpha})P,Q(t^{\beta})$,
we consider ordered partitions
$(I_1,I_2,\ldots,I_{\mu})$ from $p$ and
$(J_{1},J_2,\ldots,J_\nu)$ from $q$.
An optimal solution $S,T \in GL_n(\KK)$ of FR for  $((t^{\alpha})PAQ(t^{\beta}))^{(0)}$
is said to be {\em block-diagonal}
if $S[I_a,I_{a'}] = O$ and $T[J_b,J_{b'}] = O$ 
for all different indices $a\neq a'$, $b \neq b'$.
\begin{Lem}
For a feasible solution $(t^{\alpha})P,Q(t^{\beta})$ in MVMP,
there is a block diagonal optimal solution in FR for  $((t^{\alpha})PAQ(t^{\beta}))^{(0)}$.
\end{Lem}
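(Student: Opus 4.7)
The plan is to recast FR for $M := ((t^{\alpha})PAQ(t^{\beta}))^{(0)}$ as the equivalent MVSP and to straighten any optimal subspace pair into one compatible with the block structures by a filtration-and-projection argument; block-diagonal $(S,T)$ then falls out by picking bases within each block.

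Write $M = \sum_k M_k x_k$. Feasibility of $(t^{\alpha})P, Q(t^{\beta})$ forces $(M_k)_{ij} = (PA_kQ)_{ij}$ when $\alpha_i + \beta_j + c_k = 0$ and $(M_k)_{ij} = 0$ otherwise. Letting $\alpha(a), \beta(b)$ denote the common values on $I_a, J_b$, the block $M_k[I_a, J_b]$ is nonzero only for $(a,b) \in D(k) := \{(a',b') : \alpha(a') + \beta(b') = -c_k\}$. The driving structural fact is that $D(k)$ is strictly antidiagonal: since $\alpha(\cdot)$ and $\beta(\cdot)$ are strictly monotone across blocks, any two elements $(a,b), (a',b') \in D(k)$ with $a' \leq a$ and $b' \leq b$ must coincide (symmetrically for $\geq$).

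Fix any optimal MVSP solution $(U, V)$ for $M$. Introduce ascending filtrations $U^{(a)} := U \cap \bigoplus_{a' \leq a} E_{a'}$ and $V^{(b)} := V \cap \bigoplus_{b' \leq b} F_{b'}$ with $E_{a'} := \vecspan\{e_i : i \in I_{a'}\}$ and $F_{b'} := \vecspan\{e_j : j \in J_{b'}\}$, and define $\tilde U_a := \pi_a(U^{(a)})$, $\tilde V_b := \pi_b(V^{(b)})$ by projecting onto the single blocks $E_a, F_b$. Let $\tilde U := \bigoplus_a \tilde U_a$ and $\tilde V := \bigoplus_b \tilde V_b$. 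Since $\ker(\pi_a|_{U^{(a)}}) = U^{(a-1)}$, telescoping gives $\dim \tilde U = \dim U$, and similarly $\dim \tilde V = \dim V$. The main step---essentially the only nontrivial one---is verifying that $(\tilde U, \tilde V)$ is still MVSP-feasible. By linearity it suffices to check $\tilde u_a^\top M_k \tilde v_b = 0$ for arbitrary $\tilde u_a \in \tilde U_a$, $\tilde v_b \in \tilde V_b$, and every $k$. This is automatic whenever $(a,b) \notin D(k)$. Otherwise, lift $\tilde u_a$ to $u \in U^{(a)}$ whose $a$-th block component is $\tilde u_a$, and similarly lift $\tilde v_b$ to $v \in V^{(b)}$; write $u = \tilde u_a + u'$ and $v = \tilde v_b + v'$ where $u', v'$ are supported on strictly earlier blocks. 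Expanding the known equality $0 = u^\top M_k v$ into four pieces, the antidiagonal incidence established above forces each of $\tilde u_a^\top M_k v'$, $(u')^\top M_k \tilde v_b$, and $(u')^\top M_k v'$ to vanish (any nonzero contribution would require a second point of $D(k)$ lying weakly earlier than $(a,b)$ in both coordinates, which the antidiagonal property excludes). Hence $\tilde u_a^\top M_k[I_a, J_b] \tilde v_b = 0$, as desired. This cross-term cancellation is the single subtle point I expect in the proof.

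To finish, construct $S \in GL_n(\KK)$ by picking, for each $a$, a basis of $\tilde U_a$ and extending it to a basis of $E_a$; assemble these as rows so that within each $I_a$ the $\tilde U_a$-basis rows come first. Since every row is supported in a single $E_a$, $S$ is block-diagonal with respect to $(I_a)$. Build $T \in GL_n(\KK)$ analogously for $(J_b)$. Then $SMT$ has a zero submatrix on the rows corresponding to $\tilde U$ and the columns corresponding to $\tilde V$, of size $\dim \tilde U \times \dim \tilde V = 2n - \ncrank M$, so $(S, T)$ is a block-diagonal optimal FR solution, as required.
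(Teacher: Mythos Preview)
Your proof is correct and takes a genuinely different route from the paper's. The paper argues at the level of matrices: starting from an optimal FR solution in Bruhat form $S=\pi_S U$, $T=M\pi_T$, it conjugates $U$ through $(t^{\alpha})$ to get a biproper $\tilde U$, invokes Lemma~\ref{lem:trick} (the monomial-specific trick that lets one replace a biproper factor by its degree-$0$ part), and observes that $\tilde U^{(0)}$ is automatically block-diagonal; feasibility of the Deg-Det update then forces $\tilde U^{(0)} M \tilde M^{(0)}$ to retain an $r\times s$ zero block. In effect, the paper's recipe is ``zero out the off-diagonal blocks of the triangular factors.'' Your argument instead works directly at the MVSP level: you isolate the antidiagonal incidence pattern of the blocks of $M_k$ (which is the monomial structure in disguise, the same content as Lemma~\ref{lem:trick} but stated combinatorially) and use a filtration-and-projection to straighten an arbitrary optimal $(U,V)$ into a block-compatible pair of the same dimension. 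Your route is more self-contained---it avoids Bruhat decomposition, the commutativity lemma, and the Deg-Det update machinery---and makes transparent exactly why the monomial hypothesis is needed (it is precisely what forces the cross terms $\tilde u_a^\top M_k v'$ etc.\ to vanish). The paper's route has the advantage of reusing infrastructure already in place and yielding the simple operational description ``drop off-diagonal blocks of $U,M$.''
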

\begin{proof}
Let $S = \pi_S U$, $T = M \pi_T$
be an optimal solution as in~(\ref{eqn:chosen}). 
Then $(t^{\11_r})S(t^{\alpha})P = (t^{\11_r}) \pi_S U (t^{\alpha})P 
= (t^{\11_r}) \pi_S (t^{\alpha})\tilde UP$ for biproper
$\tilde U := (t^{-\alpha}) U (t^{\alpha})$.
Similarly,
$Q(t^{\beta})T(t^{\11_s-\11}) = Q\tilde M(t^{\beta}) \pi_T (t^{\11s -\11})$ for
 biproper $\tilde M:= (t^{\beta}) M (t^{-\beta})$.
By Lemma~\ref{lem:trick},
$(t^{\11_r}) \pi_S (t^{\alpha})\tilde U^{(0)}P$, $Q\tilde M^{(0)} (t^{\beta}) \pi_T (t^{\11_s -\11})$
is also feasible.
Here $\tilde U^{(0)}$ and $\tilde M^{(0)}$ are
block-diagonal, and commute with $(t^{\alpha})$
and $(t^{\beta})$, respectively.
Thus $\tilde U^{(0)}((t^{\alpha})P A Q(t^{\beta}))^{(0)} \tilde M^{(0)}$
has a zero block of size $r+s$.
Necessarily $\tilde U^{(0)}, \tilde M^{(0)}$
is optimal and diagonal for FR.
\end{proof}
The proof provides a construction of such a solution:
Take an optimal solution $S= \pi_S U$, $T = M\pi_T$, and
replace each off-diagonal block $U,M$ by the zero matrix.
The resulting $U,M$ is a desired optimal solution.
Notice that the dominant-MVS property is preserved.
\begin{description}
	\item[Algorithm: Hungarian Deg-Det]
	\item[Input:] $A[c]= \sum_{k =1}^m A_kt^{c_k} x_k$, where $d:=  \max_{k \in [m]}c_k$.
	\item[Output:] $\varDelta_{\ell}(A[c])$ for $\ell =1,2,\ldots,n$.
	\item[Initialization:] Let $\alpha := 0$,
	$\beta := -d {\bf 1}$, and $P=Q = I$.
	\item[0:] Compute $\ell := \ncrank ((t^{\alpha})PA[c]Q(t^{\beta}))^{(0)}$, and
	output $\varDelta_{\ell'}(A[c]) := \ell' d$
	for $\ell' \in [\ell]$.
	\item[1:] Choose a dominant and block-diagonal optimal solution $S,T$ such that the row set $X$ and column set $Y$ of the zero submatrix of $S((t^{\alpha})PA[c]Q(t^{\beta}))^{(0)}T$
 with $2n- |X|-|Y|  = \ell$ are positioned as follows:
 \begin{itemize}
 \item $X \cap I_a$ and $Y \cap J_b$, if noempty, 
 constitute the first rows of $I_a$ and the first columns of $J_b$ for each index $a$ and $b$ of
 the partitions $(I_a)$ and $(J_b)$, respectively.
 \end{itemize}
      \item[2:] Let $\kappa_1>0$ be the maximum integer $\kappa$ such that
	$(t^{\alpha+\kappa\11_X}) SP, QT(t^{\beta+\kappa(\11_Y-\11)})$ is feasible, and let $\kappa_2>0$ be the maximum integer $\kappa$ such that $\alpha +\kappa\11_X \in \ZZ^n_\downarrow$
 and $\beta + \kappa(\11_Y-\11) \in \ZZ^n_{\downarrow}$.
 \item[3:] If $\kappa_1 = \infty$, then output $\varDelta_{\ell'}(A[c]) = -\infty$
 for $\ell' \in [\ell, n]$ and stop.
 Otherwise, let $\kappa := \min \{\kappa_1,\kappa_2\}$,
 and let $(t^{\alpha})P, Q (t^{\beta}) \leftarrow 
 (t^{\alpha+\kappa\11_X}) SP, QT(t^{\beta+\kappa(\11_Y-\11)})$. 
	\item[4:] Compute $\overline \ell := \ncrank ((t^{\alpha})PA[c]Q (t^\beta))^{(0)}$.
		If $\overline \ell > \ell$, then
		output $\varDelta_{\ell'}(A[c]) := - \sum_{i=n-\ell'+1}^n\alpha_{i} - \sum_{j=n-\ell'+1}^n\beta_{j}$
		for $\ell' \in [\ell+1,\overline \ell]$.
		In addition, If $\overline{\ell} = n$, then stop.
		Otherwise, let $\ell \leftarrow \overline{\ell}$ and go to step 1.
\end{description}

\begin{Thm}\label{thm:primal_dual}
{\bf Hungarian-Deg-Det}
correctly outputs all $\varDelta_{\ell}(A[c])$
in $O(r^*n^2)$ iterations, where $r^* := \ncrank A$.
\end{Thm}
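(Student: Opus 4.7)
The plan is to establish correctness by identifying \textbf{Hungarian Deg-Det} as a specialization of the \textbf{Deg-SubDet} algorithm, and then to bound the iteration count by a two-level argument separating rank-increasing from rank-preserving iterations.

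For correctness, Lemma~\ref{lem:trick} allows us to restrict the feasible region of MVMP for $A[c]$ to pairs $(t^{\alpha})P, Q(t^{\beta})$ with $P, Q \in GL_n(\KK)$, and this invariant is preserved by the updates in step~3 because $S, T \in GL_n(\KK)$ are returned by FR. The lemma immediately preceding the algorithm guarantees a block-diagonal dominant optimal pair $(S,T)$, so step~1 is always executable. The update in step~3 coincides with the one in \textbf{Deg-SubDet} except that $\kappa := \min(\kappa_1,\kappa_2)$ in place of $\kappa := \kappa_1$, where $\kappa_2$ is precisely the threshold that keeps $\alpha,\beta \in \ZZ^n_{\downarrow}$. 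The resulting pair is still feasible and strictly decreases the MVMP$^{\KK}_{\ell}$ objective by $\kappa(|X|+|Y|-n) > 0$. The stopping criteria ($\kappa_1 = \infty$ certifying $\varDelta_{\ell'}(A[c]) = -\infty$, and $\overline{\ell} = n$) coincide with those of \textbf{Deg-SubDet}, so correctness of the outputs follows from Theorem~\ref{thm:formal_primal_dual}.

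To bound iterations, I would classify them as \emph{rank-increasing} ($\overline{\ell} > \ell$ in step~4) or \emph{rank-preserving} ($\overline{\ell} = \ell$). Since $\ell$ is monotone non-decreasing by Lemma~\ref{lem:consecutive}(1) and $\ell \leq \ncrank A[c] = \ncrank A = r^*$ throughout, at most $r^*$ iterations can be rank-increasing. The main work is to bound rank-preserving iterations by $O(n^2)$ between consecutive rank increases. Applying Lemma~\ref{lem:consecutive}(2) to the dominant block-diagonal FR solutions, within a rank-preserving phase the zero-row set $X$ shrinks monotonically and the zero-column set $Y$ grows monotonically, up to the absorbing transformation that identifies $SP$ and $QT$ with the identity. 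Coupled with the block-prefix constraint imposed in step~1 (each $X \cap I_a$ is a prefix of $I_a$ and each $Y \cap J_b$ is a prefix of $J_b$), this forces the ordered partitions $(I_a),(J_b)$ of $[n]$ induced by $\alpha, \beta$ to evolve in a controlled manner.

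The hard part is turning this qualitative monotonicity into an explicit $O(n^2)$ bound. Each iteration with $\kappa = \kappa_2 < \kappa_1$ merges at least one pair of adjacent blocks (whose $\alpha$- or $\beta$-values equalize), while an iteration with $\kappa = \kappa_1 \leq \kappa_2$ is immediately followed by a rank increase. Merging, however, may be accompanied by a simultaneous split when $X \cap I_a$ is a proper prefix of $I_a$, so the raw block count need not be monotone. The plan is to introduce a lexicographic potential that strictly decreases in every rank-preserving iteration while ranging over a set of size $O(n^2)$ -- for instance, a refinement-level potential of the form $\Phi := \sum_{a} \binom{|I_a|}{2} + \sum_{b} \binom{|J_b|}{2}$, corrected to account for splits enforced by the monotone shrinking of $X$ and growing of $Y$. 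Verifying strict monotonicity of such a potential in the presence of simultaneous splits and merges -- the algebraic analog of the alternating-tree potential in the classical Hungarian method -- is the main technical obstacle. Combining $r^*$ rank-increasing iterations with $O(n^2)$ rank-preserving iterations per phase yields the announced $O(r^* n^2)$ bound.
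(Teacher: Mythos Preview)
Your overall architecture matches the paper's: correctness reduces to Theorem~\ref{thm:formal_primal_dual}, and the iteration count splits into at most $r^*$ rank-increasing steps with $O(n^2)$ rank-preserving steps between them. Your identification of the contrapositive of Lemma~\ref{lem:bar_ell=ell}(1) (if $\kappa=\kappa_1<\kappa_2$ then the rank increases) is also correct. The gap is in the $O(n^2)$ bound for a rank-preserving phase.

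Your claim that ``$X$ shrinks monotonically and $Y$ grows monotonically'' does not follow from Lemma~\ref{lem:consecutive}(2) in this setting. Because the block partitions $(I_a),(J_b)$ change between iterations (a merge in one place, a split elsewhere via the prefix rule), an index outside $X$ can enter $X'$; the set inclusion $X'\subseteq X$ simply fails. What the paper establishes instead (Lemma~\ref{lem:bar_ell=ell}(2)) is a \emph{surjection} $\rho\colon X\to X'$ with $\rho(i)\le i$ for each $i\in X$, and likewise a surjection $\varphi\colon[n]\setminus Y\to[n]\setminus Y'$ with $\varphi(j)\ge j$. This is a statement about index positions, not about block sizes, and it is exactly what drives $X,Y$ toward the initial-segment form $[r],[s]$. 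Once that form is reached one has $\kappa_2=\infty$, and the contrapositive of part~(1) finishes the phase.

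Your proposed potential $\Phi=\sum_a\binom{|I_a|}{2}+\sum_b\binom{|J_b|}{2}$ is therefore tracking the wrong quantity; as you suspected, it is not monotone under simultaneous splits and merges, and no natural correction of it will be. The potential that works is position-based, for instance $\Psi:=\sum_{i\in X}i+\sum_{j\notin Y}(n+1-j)$: the surjection property gives $\Psi'\le\Psi$ (pick for each $i'\in X'$ one preimage in $X$, obtaining an injection $X'\hookrightarrow X$ with values only increasing), with strict inequality whenever $(X,Y)\neq(X',Y')$, and $\Psi=O(n^2)$. Replacing your block-size potential with this position-based one is the missing idea.
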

The correctness is clear from Theorem~\ref{thm:formal_primal_dual}.
We show the iteration bound.
\begin{Lem}\label{lem:bar_ell=ell}
Suppose that $\overline{\ell} = \ell$ holds in step 4. Then we have the following:
\begin{itemize}
    \item[{\rm (1)}] $\kappa = \kappa_2$.
    \item[{\rm (2)}] Let $X'$ and $Y'$ denote the row and column sets, respectively,
of the zero submatrix
in the next iteration.
\begin{itemize}
    \item[{\rm (2-1)}] $X \neq X'$ or $Y \neq Y'$.
    \item[{\rm (2-2)}] There are surjections $\rho:X \to X'$ and $\varphi: [n] \setminus Y \to [n] \setminus Y'$ such that
    $\rho(i) \leq i$ for every $i \in X$
    and $\varphi(j) \geq j$ for every $j \in [n] \setminus Y$.
\end{itemize}
\end{itemize}
\end{Lem}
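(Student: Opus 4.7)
The plan is to deduce all three parts from an explicit formula for the leading coefficient matrix after the update, combined with Lemma~\ref{lem:consecutive}~(2) applied to the long-step Hungarian update.

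Set $B := (t^{\alpha})PA[c]Q(t^{\beta})$ and let $B'$ denote the updated matrix appearing in step~3. Since $S$ and $T$ are chosen block-diagonal with respect to the partitions $(I_a)$ and $(J_b)$ on which $\alpha$ and $\beta$ are constant, $S$ commutes with $(t^{\alpha})$ and $T$ with $(t^{\beta})$, so
\[
B' = (t^{\kappa\11_X})\, SBT\, (t^{\kappa(\11_Y - \11)}).
\]
An entry-wise computation shows that $(B')^{(0)}$ vanishes on $([n]\setminus X)\times([n]\setminus Y)$, coincides with $SB^{(0)}T$ on the ``L-shape'' complement of both $X\times Y$ and this zero block, and on $X\times Y$ equals the coefficient of $t^{-\kappa}$ of $SBT$. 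In particular, when $\kappa = \kappa_1$ at least one entry of $(B')^{(0)}[X,Y]$ is nonzero, whereas $\kappa = \kappa_2$ is precisely the threshold at which some between-block boundary of $\alpha$ or $\beta$ becomes tight and the partition $(I_a)$ or $(J_b)$ is forced to merge (the within-block refinements caused by $X$, $Y$ occur for any $\kappa > 0$ and are irrelevant to $\kappa_2$).

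For (1), I would argue by contradiction: suppose $\bar\ell = \ell$ but $\kappa = \kappa_1 < \kappa_2$. Lemma~\ref{lem:consecutive}~(2), whose proof via Bruhat decomposition applies verbatim to the long-step update, gives under the dominant-MVS strategy that $r'+s' = r+s$ implies the set inclusions $X' \subseteq X$ and $Y \subseteq Y'$ (after identifying the positions of the zero blocks in the respective frames). The new nonzero entry at some $(i^*, j^*) \in X\times Y$ then gives $j^* \in Y \subseteq Y'$, and dominance combined with the preserved between-block structure forces $i^* \in X'$ (any row excluded from $X'$ would have to be compensated by dropping a column from $Y'$, but $Y'\supseteq Y$ prevents the column $j^*$ from being dropped within the block containing $j^*$). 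This contradicts the vanishing of $(B')^{(0)}$ on $X'\times Y'$. Hence $\kappa_1 \ge \kappa_2$, i.e., $\kappa = \kappa_2$.

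For (2-1) and (2-2): by (1), $\kappa = \kappa_2$, so at least one between-block boundary of $\alpha$ or $\beta$ becomes tight, forcing a merge that genuinely changes the partition structure. Since $X$ and $X'$ (respectively $Y$, $Y'$) are defined as the first rows (columns) of each block of the old, respectively new, partition, the partition change forces $X \neq X'$ or $Y \neq Y'$, giving (2-1). For (2-2), the inclusions $X' \subseteq X$ and $Y \subseteq Y'$ from Lemma~\ref{lem:consecutive}~(2), combined with the front-heavy structure of $X \cap I_a$ within each block and its tail-heavy counterpart for the complements, let me define $\rho: X \to X'$ by sending each $i \in X$ to the largest element of $X'$ that lies in the same merged block and is at most $i$; this is a surjection with $\rho(i) \leq i$. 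An analogous tail-heavy construction on $[n]\setminus Y$ produces $\varphi$ with $\varphi(j) \ge j$.

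The main obstacle I anticipate is the dominance step in (1): rigorously justifying that any dominant MVS of $(B')^{(0)}$ of dimension sum $r+s$ must include the row $i^*$ in its index set once $i^*$ has a nonzero entry at $j^* \in Y \subseteq Y'$. This requires exploiting that both $S,T$ and $S',T'$ can be taken block-diagonal with respect to the between-block structure preserved by $\kappa < \kappa_2$, so that the MVSP question decouples block by block and the contradiction can be localized to the single block containing $(i^*,j^*)$.
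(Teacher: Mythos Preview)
Your argument for (1) has a genuine gap at the final contradiction. The zero block produced in step~1 of the next iteration sits in $S'(B')^{(0)}T'$, not in $(B')^{(0)}$: the algorithm chooses block-diagonal $S',T'$ so that $S'(B')^{(0)}T'[X',Y']=O$, and $X',Y'$ are the index sets of \emph{that} zero submatrix. Exhibiting a nonzero entry $(B')^{(0)}_{i^*j^*}$ with $(i^*,j^*)\in X'\times Y'$ is therefore no contradiction, since $S'$ may mix the rows of the new block containing $i^*$ and annihilate that entry. Your dominance argument for $i^*\in X'$ is likewise not a proof: dominance of $(X',Y')$ is relative to $(B')^{(0)}$, whereas the containments $X'\subseteq X$, $Y\subseteq Y'$ you extract from Lemma~\ref{lem:consecutive} live in the matrix $S'(SBT)^{(0)}T'$, and you give no mechanism to transport the nonzero entry between these frames. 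The paper's route is different: because $\kappa<\kappa_2$ keeps the new partition a \emph{refinement} of the old one, $S'$ commutes with both $(t^{\alpha})$ and $(t^{\kappa\11_X})$, so the two consecutive updates collapse to a single feasible step of size $\kappa+\kappa'$ along the \emph{same} direction $(X,Y)$ (with transformation $S'S,TT'$). The Lemma~\ref{lem:consecutive} argument then forces $X'=X$, $Y'=Y$, and feasibility of step size $\kappa+\kappa'>\kappa_1$ contradicts the maximality of $\kappa_1$ directly --- no appeal to a specific nonzero entry is needed.

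Your argument for (2-2) also breaks. You invoke the global inclusion $X'\subseteq X$ from Lemma~\ref{lem:consecutive}~(2), but once $\kappa=\kappa_2$ a merge occurs and this inclusion is false: a merged new block $I'_{a^*}$ has the form $(I_a\setminus X)\cup(I_{a+1}\cap X)$, so $X\cap I'_{a^*}$ consists of the \emph{last} indices of $I'_{a^*}$ while $X'\cap I'_{a^*}$ consists of the \emph{first} ones, and these are typically disjoint. The paper's proof does not claim $X'\subseteq X$; instead it applies Lemma~\ref{lem:consecutive}~(2) \emph{within each new block} $I'_{a'}$ (legitimate because $S'$ is block-diagonal there) to obtain an injection $X'\cap I'_{a'}\hookrightarrow X\cap I'_{a'}$. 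Since $X'\cap I'_{a'}$ occupies the smallest indices of $I'_{a'}$ and $X\cap I'_{a'}$ the largest, these block-wise injections assemble into the required surjection $\rho:X\to X'$ with $\rho(i)\le i$.
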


\begin{proof}
(1). Suppose for the contrary that $\kappa_1 < \kappa_2$.
Then, the new partitions $(I_a')$ and $(J_b')$ in the next iteration 
are refinements of the current partitions $(I_a)$ and $(J_b)$, respectively.
Namely, $(I_a')$ consists of $I_a \cap X$ and $I_a \setminus X$ for each $a$ (omitting empty sets) and $(J_b')$ is analogous.
Let $S',T'$ be an optimal solution of FR in the next iteration.  
Since $S'$ is block-diagonal relative to $(I_a')$ and $(I_a')$ is a refinement of $(I_a)$, 
$S'$ commutes with $(t^\alpha)$ as well as $(t^{\kappa \11_X})$.
Similarly, $T'$ commutes with $(t^{\kappa (\11_Y-\11)})$ and $(t^\beta)$.
So,
\begin{align*}
    &(t^{\kappa'{\bf 1}_{X'}})S'(t^{\kappa {\bf 1}_X})(t^{\alpha}) SPAQT (t^{\beta}) (t^{\kappa ({\bf 1}_Y-{\bf 1})}) T' (t^{\kappa' (\11_{Y'}-\11)}) \\
    &= (t^{\kappa'\11_{X'} + \kappa\11_X + \alpha}) S'SPAQTT' (t^{\beta + (\kappa \11_{Y}-\11) + \kappa'(\11_{Y'} - \11)})
\end{align*}
is proper.
By the same argument as Lemma~\ref{lem:consecutive}, we must have $X' = X$ and $Y' = Y$.
However, this contradicts the choice of $\kappa=\kappa_1$ as we could have taken a larger step size $\kappa' + \kappa$.

(2). From (1), we have $\kappa_1 \geq \kappa_2$.
In this case, we may 
assume that $(I_a')$ is not a refinement of $(I_a)$; the other case for $(J_b')$ is similar.
Then, there exists some index $a^*$ such that the first row of $I_{a^*}'$ is not contained in $X$.
Since $X'$ is chosen from the top rows in each block of $(I'_a)$, we must have $X' \neq X$.
On the other hand, $X \cap I_a'$ (if nonempty) constitutes the bottom rows of $I_a'$.
By Lemma~\ref{lem:consecutive}~(2) applied to each diagonal block of $S'$, 
there is an injection from $X' \cap I_a'$ to $X \cap I_a'$. 
Thus, there must exist a desired surjection $\rho$.
\end{proof}
Now Theorem~\ref{thm:primal_dual} follows from:
\begin{Lem}
Within $O(n^2)$ iterations, it holds $\kappa_1 =\infty$
or $\overline{\ell} > \ell$.
\end{Lem}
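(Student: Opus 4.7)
The plan is to find a nonnegative integer-valued monovariant $\Phi$ on iterates, bounded by $O(n^2)$, that strictly decreases in each iteration satisfying the hypothesis of the lemma (namely $\overline\ell=\ell$, and implicitly $\kappa_1<\infty$, since $\kappa_1=\infty$ would already terminate the algorithm in step~3). Since a strictly decreasing sequence of integers in $[0,O(n^2)]$ has length $O(n^2)$, between any two rank increases only $O(n^2)$ iterations can occur, giving the claim.

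Guided by Lemma~\ref{lem:bar_ell=ell}~(2-2)---the surjection $\rho:X\to X'$ only moves row indices in $X$ weakly down, while $\varphi:[n]\setminus Y\to[n]\setminus Y'$ only moves column indices outside $Y$ weakly up---I would take
\[
\Phi(X,Y) \; := \; \sum_{i\in X} i \; + \; \sum_{j\in[n]\setminus Y}(n+1-j).
\]
Each summand lies in $[1,n]$ with at most $n$ terms per sum, so $\Phi\in\{0,1,\ldots,n(n+1)\}$.

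The key verification is weak monotonicity of $\Phi$, strict whenever $(X,Y)\neq(X',Y')$. Pick any injective right inverse $g:X'\to X$ of $\rho$; then $g(i')\ge \rho(g(i'))=i'$, whence
\[
\sum_{i'\in X'}i' \;\le\; \sum_{i'\in X'}g(i') \;\le\; \sum_{i\in X}i,
\]
with strict inequality if $g\neq \mathrm{id}_{X'}$ or $g(X')\subsetneq X$---at least one of which must hold whenever $X\neq X'$. A symmetric argument for an injective right inverse $h:[n]\setminus Y'\to[n]\setminus Y$ of $\varphi$ (so $h(j')\le j'$, and thus $n+1-h(j')\ge n+1-j'$) yields $\sum_{j'\in[n]\setminus Y'}(n+1-j')\le\sum_{j\in[n]\setminus Y}(n+1-j)$, strict whenever $Y\neq Y'$.

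Finally, Lemma~\ref{lem:bar_ell=ell}~(2-1) provides $(X,Y)\neq(X',Y')$ under the standing hypothesis, so $\Phi$ drops by at least one integer unit each iteration. Combined with $\Phi\in[0,n(n+1)]$ this yields the $O(n^2)$ bound. The only subtle point---forcing the weak decrease of $\sum_{i\in X}i$ when $\rho$ is merely surjective rather than bijective---is handled uniformly by passing to the injective right inverse above, which covers both the $|X'|=|X|$ case (where $g$ is a bijection and strictness comes from $g\neq\mathrm{id}$) and the $|X'|<|X|$ case (where strictness comes from $g(X')\subsetneq X$) in one stroke. I do not expect any further obstacle beyond this device.
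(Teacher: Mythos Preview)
Your proof is correct and takes essentially the same approach as the paper, deriving the $O(n^2)$ bound from the monotone drift of indices guaranteed by Lemma~\ref{lem:bar_ell=ell}~(2). The paper argues more tersely that $X$ and $Y$ eventually become initial segments $[r],[s]$ (whence $\kappa_2=\infty$) and then invokes part~(1) of Lemma~\ref{lem:bar_ell=ell} to finish; your explicit monovariant $\Phi$ makes the $O(n^2)$ count concrete and bypasses part~(1) altogether, which is a small but genuine simplification.
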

\begin{proof}
Consider successive iterations
with $\overline{\ell} = \ell$ holding.
By Lemma~\ref{lem:bar_ell=ell}~(2), after $O(n^2)$ iterations,
$X$ and $Y$ in step 1 become the form of
$X = [r]$ and $Y=[s]$ for some $r,s$.
Then $\kappa_2 = \infty$.
Suppose that $\kappa_1 \neq \infty$.
Then $\kappa = \kappa_1 < \kappa_2$.
By Lemma~\ref{lem:bar_ell=ell}~(1),
it holds $\overline{\ell} > \ell$ in step 4.
\end{proof}
\begin{Rem}\label{rem:partition}
Suppose that $A$ is a partition matrix in (\ref{eqn:2x2}), 
and block-diagonal $P,Q$ in (\ref{eqn:block-diagonalized})
are used. Then, by considering the partition 
of each block, 
the bound $O(n^2)$ is improved to $O(\sum_i n_i^2 + \sum_j m_j^2)$.
\end{Rem}

\subsubsection{A primal-dual framework and its relation to previous algorithms}\label{subsub:primal-dual}

The {\bf Hungarian Deg-Det} algorithm 
may be viewed as a primal-dual algorithm if it keeps, as a primal solution,  
an $\ell \times \ell$ nc-nonsingular submatrix 
$((t^{\alpha})PA[c]Q(t^{\beta}))^{(0)}[I,J]$,
such that $I$ and $J$ 
take $\ell$ smallest values of $\alpha$ and $\beta$, respectively.
Then $\varDelta_{\ell}(A[c])$ is given by $- \sum_{i \in I} \alpha_i - \sum_{i \in J} \beta_j$. 
%

Suppose that  we have more ``functional'' primal solution $M$ 
satisfying:
\begin{itemize}
\item[(i)] $M$ certifies the nc-nonsingularity of
$((t^{\alpha})PA[c]Q(t^{\beta}))^{(0)}[I,J]$ 
before and after the update of $(t^{\alpha})P, Q(t^{\beta})$, and 
\item[(ii)] $M$ helps to find a dominant-MVS if $\overline{\ell} = \ell$ and
to find a larger nc-nonsingular submatrix including
$((t^{\alpha})PA[c]Q(t^{\beta}))^{(0)}[I,J]$ if $\overline{\ell} > \ell$. 
\end{itemize}
Let $\sigma_{\rm mvs}$ denote 
the time complexity for (i)
and let $\sigma_{\rm aug}$ denote   
the time complexity for (ii) that includes the update of $M$.
Additionally, let $\sigma_{\kappa}$ denote the time complexity of computing $\kappa$.
In this setting, {\bf Hungarian Deg-Det} runs in $O(n^2 (\sigma_{\rm mvs} + \sigma_{\kappa}) + r^* \sigma_{\rm aug})$ time.

For a matrix (\ref{eqn:bipartite}) of bipartite matching, 
{\bf Hungarian Deg-Det}
(with diagonal $P,Q$) 
becomes the classical Hungarian method, where 
a primal solution $M$ is just a matching in the bipartite graph, 
and the number $O(n^2)$ of dual update is improved to $O(n)$ (Remark~\ref{rem:partition}).
The optimality checking and a dominant-MVS finding are easily done from 
the residual graph of $M$. 
The augmentation procedure is edge-flip along the augmenting path.

For a matrix (\ref{eqn:A_intersection}) of linear matroid intersection,  
a primal solution $M$ is taken as a common independent set of two linear matroids.
Via Gaussian elimination, it specifies 
an $|M| \times |M|$ (nc-)nonsingular matrix and the residual graph 
in which a dominant-MVS or an augmenting path is easily obtained. 
The resulting algorithm matches 
a matrix formulation~\cite{FurueHirai} of
Frank's weight-splitting algorithm~\cite{Frank1981}.

For a $2 \times 2$ partitioned matrix (\ref{eqn:2x2}), {\bf Hungarian Deg-Det} yields 
a new primal-dual algorithm that is similar to but different from the one by Iwamasa~\cite{Iwamasa_degdet}.
A primal solution $M$ is a (multi-)subset of $2 \times 2$ blocks 
satisfying certain algebraic conditions, 
which is called a {\em matching} in \cite{HiraiIwamasaIPCO}, and 
certifies (nc-)nonsingularity of a submatrix meeting the blocks.
As $\alpha, \beta$ change,  
$((t^{\alpha})PA[c]Q(t^{\beta}))^{(0)}[I,J]$ 
(in suitable arrangement)
changes as  
\[
	\left(
	\begin{array}{cc}
		O & A_2^* \\
		A_1^* & \ast
	\end{array}
	\right) \to
 \left(
	\begin{array}{cc}
		O & A_2^* \\
		A_1^* & O
	\end{array}
	\right) \to 
 \left(
	\begin{array}{cc}
		\ast & A_2^* \\
		A_1^* & O
	\end{array}
	\right),
\]
where $A_1^*$ and  $A_2^*$ are square matrices.
See the last of the proof of Lemma~\ref{lem:(S,T)}~(2).
A matching $M$ is a subset of blocks 
meeting $A_1^*$, $A_2^*$, and remains 
a certificate of nonsigularity during the change.
For dominant-MVS finding and augmentation, 
one can use parts of the algorithm by Hirai and Iwamasa~\cite{HiraiIwamasaIPCO} 
for (nc-)rank computation of a $2 \times 2$ partitioned matrix. 
The former is done in $O(n)$ time
and the latter is done in $O(n^3)$ 
time\footnote{Although the paper~\cite{HiraiIwamasaIPCO} 
showed $O(n^4)$ running-time of the augmentation, Iwamasa~\cite{Iwamasa_degdet} pointed out an improved estimate $O(n^3)$}.
The number of dual updates is $O(n)$ 
(Remark~\ref{rem:partition}).
The total time complexity is $O(r^*n^3)$, which matches~\cite{Iwamasa_degdet}. 

For these examples, a primal solution $M$ corresponds to 
an (extreme) point 
in ${\cal P}(A) = {\cal Q}(A)$.
This suggests the design of a general primal-dual algorithm
that uses a point in ${\cal Q}(A)$ as a primal solution.
This is natural from the view of Theorem~\ref{thm:polyhedral}~(1).
The case of the matrix $A_{\cal H}$ in (\ref{eqn:linearmatroidmatching}),  where ${\cal P}(A_{\cal H}) \neq {\cal Q}(A_{\cal H})$, is a touchstone toward such an algorithm, and is studied in Section~\ref{subsub:frac_linear_algo}.   

\section{Applications}\label{sec:applications}
In this section, we present applications of our results to fractional matroid matching and the Brascamp-Lieb inequality.
\subsection{Fractional linear matroid matching}\label{subsec:frac_linear}
Let ${\cal H} = \{ H_1,H_2,\ldots,H_m\}$ be a collection of 2-dimensional subspaces in $\KK^n$.
A {\em fractional matroid matching} (Vande Vate~\cite{VandeVate92}) for ${\cal H}$ is a nonnegative vector $y \in \RR^m_+$ satisfying
\[
\sum_{k=1}^m y_k \dim H_k \cap X \leq \dim X \quad (X \leqslant \KK^n).
\]
In addition, if $2 \sum_{k=1}^my_k = n$, then it is said to be {\em perfect}.
The {\em fractional matroid matching polytope} $FMP({\cal H})$ for ${\cal H}$ is the polytope consisting of all fractional matroid matchings.
Suppose that for each $k \in [m]$, the subspace
$H_k$ is spanned by vectors $a_k,b_k \in \KK^n$.
As in (\ref{eqn:linearmatroidmatching}), 
define linear symbolic matrix $A_{\cal H} := \sum_{k=1}^m (a_k b_k^{\top} - b_k a_k^{\top}) x_k$.
As mentioned in Section~\ref{subsub:nonbipartite}, 
rank and nc-rank differ for this class of matrices.
However, Oki and Soma~\cite{OkiSoma} 
showed that $\ncrank A_{\cal H}$ still has an interesting interpretation---it is equal to twice the maximum of a fractional matroid matching:
\begin{Thm}[\cite{OkiSoma}]
	$\ncrank A_{\cal H} = 2 \max\{ {\bf 1}^{\top}y \mid y \in FMP({\cal H})\}$.
\end{Thm}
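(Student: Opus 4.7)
The plan is to derive the identity from the Fortin--Reutenauer formula (Theorem~\ref{thm:FR}) by matching the MVSP feasibility condition for the skew-symmetric matrix $A_{\cal H}$ against the defining inequalities of the fractional matroid matching polytope. First, I would observe that
\[
u^\top A_k v \;=\; (u^\top a_k)(b_k^\top v) - (u^\top b_k)(a_k^\top v) \;=\; \det \begin{pmatrix} u^\top a_k & u^\top b_k \\ a_k^\top v & b_k^\top v \end{pmatrix},
\]
so $A_k(U,V) = \{0\}$ forces, for each $k$, the images $\pi_k^L(U) := \{(u^\top a_k, u^\top b_k) : u \in U\}$ and $\pi_k^R(V) := \{(a_k^\top v, b_k^\top v) : v \in V\}$ in $\KK^2$ to jointly span at most a common line. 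Hence $\dim \pi_k^L(U) + \dim \pi_k^R(V) \leq 2$, and via the identity $\dim(H_k \cap U^\perp) = 2 - \dim \pi_k^L(U)$ together with its $V$-analogue,
\[
\dim(H_k \cap U^\perp) + \dim(H_k \cap V^\perp) \;\geq\; 2 \quad (k \in [m]).
\]

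For the weak direction $2 \max \mathbf{1}^\top y \leq \ncrank A_{\cal H}$, I would apply the FMP defining inequality with $X = U^\perp$ and with $X = V^\perp$ and sum:
\[
2 \,\mathbf{1}^\top y \;\leq\; \sum_{k} y_k \bigl[\dim(H_k \cap U^\perp) + \dim(H_k \cap V^\perp)\bigr] \;\leq\; (n - \dim U) + (n - \dim V).
\]
Choosing $(U,V)$ optimal in MVSP (so that the right-hand side equals $\ncrank A_{\cal H}$ by Theorem~\ref{thm:FR}) and $y$ optimal in FMP yields the weak inequality.

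For the reverse inequality, the plan is to construct a feasible $(U,V)$ achieving $\dim U + \dim V = 2n - 2\,\mathbf{1}^\top y^*$ from an extreme-point optimizer $y^*$ of FMP. Here I would invoke Vande Vate's structural description of extreme points of the fractional matroid matching polytope (in particular, the half-integrality phenomenon and the decomposition into ``flower''-type configurations): each flower should carry canonical subspaces---the span of its petals and its annihilator---from which one reads off local contributions to $U$ and $V$ that are then assembled globally. I expect this step to be the main obstacle, since it requires a careful case analysis of how the tight subspaces within each flower interact with the pencils $H_k$ through the bilinear identity above, and, in particular, verifying simultaneous MVSP feasibility both inside and across flowers so that the two inequalities in the weak duality bound collapse to equality. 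An alternative route I would consider is a blow-up argument: since $\ncrank A_{\cal H} = \tfrac{1}{d}\rank A_{\cal H}^{\{d\}}$ for $d \geq n-1$ (Theorem~\ref{thm:blow-up}), one may identify the ordinary rank of the skew-symmetric part of the blow-up with twice the matroid matching number of a ``$d$-fold'' collection of 2-dimensional subspaces, and then pass to $d \to \infty$ to recover the fractional relaxation.
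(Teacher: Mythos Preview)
The paper does not itself prove this theorem: it is quoted from \cite{OkiSoma} without argument. What the paper does prove is the weighted generalization (Theorem~\ref{thm:frac_matroid_matching}), whose specialization to $c=\mathbf{1}$ yields the statement in question. So the relevant comparison is with the proof in Section~\ref{subsub:frac_linear_proof}.

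Your weak-duality direction is correct and is essentially the unweighted shadow of the paper's argument: the identity $\dim(H_k\cap U^\perp)=2-\dim\pi_k(U)$ and the inequality $\dim(H_k\cap U^\perp)+\dim(H_k\cap V^\perp)\ge 2$ under MVSP feasibility are exactly the content of the first lemma in Section~\ref{subsub:frac_linear_proof}, and summing the FMP constraints at $U^\perp$ and $V^\perp$ is the LP-weak-duality step.

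For the reverse direction you propose two routes but complete neither, and this is a genuine gap. The extreme-point/flower approach could in principle be pushed through, but what is actually needed is a \emph{single} pair $(U,V)$ (or, by symmetry, a single chain of subspaces) that is simultaneously tight for all $k$ in the support of $y^*$; Vande Vate's half-integrality alone does not hand you this, and the ``assembling locally constructed pieces across flowers'' step is exactly where such constructions typically break. The blow-up route is closer to the original Oki--Soma argument, but as stated it is only a heuristic: you would need to prove that $\tfrac{1}{d}$ times the integral matroid-matching optimum on the $d$-fold collection converges to the fractional optimum, which is itself a nontrivial LP/rounding statement.

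The paper takes a third route that avoids both difficulties. Working with the dual formulation MVMP$^{\rm F}_{\rm max}$ (Theorem~\ref{thm:minmax_subdet}) in its symmetric form (Lemma~\ref{lem:MVMP}~(2)), it rewrites the constraints via your dimension identity to obtain a linear program whose variables are weights $\lambda_i$ on a chain of subspaces. It then relaxes the chain condition to arbitrary finite supports in ${\cal S}(\KK^n)$ and shows, by a standard uncrossing argument using the submodularity of $X\mapsto -\dim(X\cap H_k)$ and the strict submodularity of $X\mapsto \dim X\cdot\dim X^\perp$, that the relaxation is tight. The LP dual of the relaxed (infinite) program is exactly $2\max\{c^\top y: y\in FMP({\cal H})\}$, and strong duality holds because only finitely many distinct inequalities appear. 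This delivers both inequalities at once, with no appeal to the structure of extreme fractional matchings or to blow-ups.
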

This also gives an interpretation to nc-rank
of matrix (\ref{eqn:Tutte}) as fractional matching number. 
We here establish a weighted generalization of this relation as follows:
\begin{Thm}\label{thm:frac_matroid_matching}
Let $c \in \ZZ^m$ and let $\ell \in [n]$. Then it holds
$$
	\varDelta_{\ell} (A_{\cal H}[c]) =  2 \max\{ c^{\top}y \mid y \in FMP({\cal H}), 2{\bf 1}^{\top}y = \ell\}.
$$ 
 In particular, it holds that ${\cal Q}(A_{\cal H}) = 2 FMP({\cal H})$ and ${\cal Q}_{n}(A_{\cal H}) = 2PFMP({\cal H})$.
\end{Thm}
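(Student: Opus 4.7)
The plan is to reduce the full statement to the single polytope identity $\mathcal{Q}(A_{\mathcal{H}}) = 2\,FMP(\mathcal{H})$ and then establish this via LP duality. Indeed, once the identity is known, combining it with Theorem~\ref{thm:polyhedral}(1) and Proposition~\ref{prop:intersection} gives
\[
\varDelta_\ell(A_{\mathcal{H}}[c]) = \max\{c^\top u : u \in \mathcal{Q}_\ell(A_{\mathcal{H}})\} = 2\max\{c^\top y : y \in FMP(\mathcal{H}),\ 2\mathbf{1}^\top y = \ell\},
\]
and simultaneously $\mathcal{Q}_n(A_{\mathcal{H}}) = \{u \in 2\,FMP(\mathcal{H}) : \mathbf{1}^\top u = n\} = 2\,PFMP(\mathcal{H})$. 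Since $\mathcal{Q}(A_{\mathcal{H}})$ is an integral polytope (Theorem~\ref{thm:polyhedral}(2)) and $FMP(\mathcal{H})$ is defined by $\mathbb{Z}$-coefficient inequalities, the desired polytope identity is equivalent, by LP duality ranging over $c \in \mathbb{Z}^m$, to the scalar identity $\varDelta_{\max}(A_{\mathcal{H}}[c]) = 2\max\{c^\top y : y \in FMP(\mathcal{H})\}$, which is the target I would attack.

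For this identity I would start from Theorem~\ref{thm:minmax_subdet}, which expresses $\varDelta_{\max}(A_{\mathcal{H}}[c])$ as the value of MVMP$^{\rm F}_{\max}$. Since each $A_k = a_kb_k^\top - b_ka_k^\top$ is skew-symmetric, Lemma~\ref{lem:MVMP}(2) allows the simplification $\{U_i\}=\{V_j\}$ and $\xi=\eta \in \mathbb{Q}^n_{+\uparrow}$, reducing the program to
\[
\min\bigl\{\,2\textstyle\sum_j \xi_j :\ \xi \in \mathbb{Q}^n_{+\uparrow},\ \{U_i\}\ \text{complete flag},\ \xi_i + \xi_j \ge c_k\ \text{whenever}\ A_k(U_i,U_j) \ne 0\,\bigr\}.
\]
I would then analyze $A_k(U_i,U_j)\ne 0$ via the linear map $\phi_k \colon u \mapsto (u^\top a_k, u^\top b_k) \in \mathbb{K}^2$ associated to $H_k$; since $u^\top A_k v = \phi_k(u) \wedge \phi_k(v)$, the nonvanishing condition is equivalent to $\phi_k(U_i)+\phi_k(U_j) = \mathbb{K}^2$ with both $\phi_k(U_i),\phi_k(U_j) \ne \{0\}$. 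Setting $i_k^0 := \max\{i : U_i \subseteq H_k^\perp\}$ and $i_k^1 := \max\{i : \dim\phi_k(U_i) \le 1\}$, this becomes $\min(i,j) > i_k^0$ \emph{and} $\max(i,j) > i_k^1$; by monotonicity of $\xi$ the unique binding constraint per $k$ is $\xi_{i_k^0+1} + \xi_{i_k^1+1} \ge c_k$.

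The final step identifies this with the LP dual of fractional matroid matching, namely $\min \sum_X z_X \dim X$ subject to $\sum_X z_X \dim(H_k \cap X) \ge c_k$, $z \ge 0$. An uncrossing argument, using modularity of $\dim$ together with supermodularity of $X \mapsto \dim(H_k \cap X)$, shows that an optimal $z^*$ may be assumed supported on a complete flag $\{W_i\}_{i=0}^n$. Via the substitution $U_i := W_{n-i}^\perp$ and $\lambda_j := \xi_j - \xi_{j-1}$ (with $\xi_0 := 0$), and using the explicit profile $\dim(H_k \cap U_{j-1}^\perp) = 2 - \dim \phi_k(U_{j-1})$, direct computation yields the two identities
\[
\sum_j \lambda_j\,\dim U_{j-1}^\perp = \sum_j \xi_j,\qquad \sum_j \lambda_j\,\dim(H_k \cap U_{j-1}^\perp) = \xi_{i_k^0+1} + \xi_{i_k^1+1}.
\]
These give a value-preserving bijection between MVMP-feasible $(\xi,\{U_i\})$ and flag-supported FMM-dual feasible $z$, and combining with LP strong duality (and the uncrossing reduction) yields the equality $\varDelta_{\max}(A_{\mathcal{H}}[c]) = 2 \max\{c^\top y : y \in FMP(\mathcal{H})\}$.

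The main obstacle is the uncrossing reduction: while supermodularity of $X \mapsto \dim(H_k \cap X)$ is elementary, one must check that iterated uncrossing can be made to terminate (e.g.\ by a potential-function argument on the support of $z$) while simultaneously preserving dual feasibility across all $k$. A subtler pitfall, which easily derails the factor of $2$ in the final identity, is the characterization of the MVMP binding constraint: naively one would read off $\max(i,j)>i_k^1$ as the only active condition, missing the refinement $\min(i,j)>i_k^0$ that forces the binding constraint to be $\xi_{i_k^0+1}+\xi_{i_k^1+1}\ge c_k$ rather than $\xi_1+\xi_{i_k^1+1}\ge c_k$, and correspondingly forcing the correct matching with the FMM dual.
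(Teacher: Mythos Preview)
Your proposal is correct and follows essentially the same route as the paper: reduce to $\mathcal{Q}(A_{\mathcal{H}}) = 2\,FMP(\mathcal{H})$ via Proposition~\ref{prop:intersection} and Theorem~\ref{thm:polyhedral}, pass to the scalar identity for $\varDelta_{\max}$, use the symmetric form of MVMP$^{\rm F}_{\max}$ from Lemma~\ref{lem:MVMP}(2), characterize the nonvanishing condition $A_k(U_i,U_j)\neq\{0\}$ via the two critical indices (your $i_k^0,i_k^1$; the paper's $i^*,j^*$), substitute $\lambda_j=\xi_j-\xi_{j-1}$ and $X_i=U_{i-1}^\perp$, and finish by uncrossing the relaxed dual and applying LP duality. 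The only point you flag but do not resolve---termination of the uncrossing---is handled in the paper by the strictly submodular potential $X\mapsto \dim X\cdot\dim X^{\bot}$, which decreases by at least $1/K$ (with $K$ a common denominator of the $\lambda$ values) at each uncrossing step.
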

The proof is given in Section~\ref{subsub:frac_linear_proof}.

\subsubsection{Algorithmic consequence}\label{subsub:frac_linear_algo}
By Theorem~\ref{thm:frac_matroid_matching}, 
the weighted fractional matching problem can be solved by the presented framework ({\bf Hungarian-Deg-Det}) for $\varDelta_{\ell}$, which requires $O(n^2)$ calls 
of a dominant-MVS computation subroutine.
The resulting algorithm is viewed as a variant of 
the algorithm by Gijswijt~and~Pap~\cite{GP13} (in the linear matroid setting). 
Their algorithm calls, in each iteration, 
the unweighted algorithm ({\rm CLV-algorithm})
by Chang, Llewellyn, and Vande Vate~\cite{CLV01a,CLV01b}
to find a {\em dominant $2$-cover} for the associated unweighted problem, and performs a dual update.
As shown in~\cite{OkiSoma},  
the dominant $2$-cover for 
a fractional matroid matching instance ${\cal H}'$
coincides with 
the dominant-MVS in FR for $A_{{\cal H}'}$.

A naive adaptation of \textbf{Deg-Det}
does not keep 
the leading term $((t^{\alpha})PA_{\cal H}Q(t^\beta))^{(0)}$ skew-symmetric, and is unable to use
the CLV-algorithm for finding the dominant-MVS.
This is possible by using,  via Lemma~\ref{lem:MVMP},  
a symmetric formulation of MVMP$_\ell^{\KK}$:
\begin{eqnarray*}
		{\rm Min.} && - 2\sum_{i=n-\ell+ 1}^n \alpha_i \\
		{\rm s.t.} && \deg ((t^{\alpha})PA_{\cal H}[c]P^{\top} (t^{\alpha}))_{ij} \leq 0 \quad (i,j \in [n]), \\
		&& \alpha \in \QQ^n_{\downarrow}, P\in GL_n (\KK).
\end{eqnarray*}
Then the leading term $((t^{\alpha})PA_{\cal H}P^{\top}
(t^\alpha))^{(0)}$ is skew-symmetric, and 
the dominant-MVS $(U,V)$ can be obtained by the CLV-algorithm.
It necessarily satisfies $U \geqslant V$. 
Let $S \in GL_n(\KK)$ include bases of $U$ and $V$ 
as row subsets $X$ and $Y$, respectively, where $X \supseteq Y$.
Then $S,S^{\top}$ is a  
dominant optimal solution of FR, and is taken to be block-diagonal relative to $\alpha$. 
Accordingly, the feasible solution $(t^{\alpha})P$ is updated in a symmetric way:
\begin{equation*}
(t^{\alpha})P \to (t^{\alpha+ \kappa((1/2){\bf 1}_{X}+ (1/2)({\bf 1}_{Y}-{\bf 1})})SP,
\end{equation*}
where step size $\kappa$ can be rational.
The proof arguments for \textbf{Hungarian Deg-Det} 
can be adapted for the symmetric version.
Actually, the proof relies on the partition structure induced by $\alpha,\beta$,
not the integrality of $\alpha,\beta$.
Further, one can verify that Lemmas~\ref{lem:(S,T)} and \ref{lem:consecutive} and the arguments of Section~\ref{subsub:proof} can be adapted for this setting.
Then, the resulting algorithm solves the weighted linear matroid matching in $O(n^2)$ calls of the CLV-algorithm, 
and has the same complexity as and looks quite similar to~\cite{GP13} (although we could not see the complete coincidence).

It is an interesting future research  
to incorporate the CLV-algorithm into  
a primal-dual framework in Section~\ref{subsub:primal-dual}, which will improve the time complexity greatly.
It is expected that an extreme fractional matching matching
will work as a primal solution and its characterization in~\cite{CLV01b} will help.

\subsubsection{Proof of Theorem~\ref{thm:frac_matroid_matching}}\label{subsub:frac_linear_proof}
By Proposition~\ref{prop:intersection}, it suffices to show ${\cal Q}(A_{\cal H}) = 2 FMP({\cal H})$.
We show an equivalent relation $\varDelta_{\rm max}(A[c]) = 2 \max \{c^{\top} y \mid y \in FMP({\cal H})\}$.
By Theorem~\ref{thm:minmax_subdet} and Lemma~\ref{lem:MVMP}, $\varDelta_{\rm max}(A[c])$ is given by
\begin{align}\label{eq:LP-FMP}
    \begin{split}
    {\rm Min.} \quad &  2 \sum_{i=1}^n \xi_i  \\
    {\rm s.t.} \quad &\xi_i +\xi_j  \geq c_k \quad (i,j \in [n],k \in [m]:(a_k b_k^{\top} - b_k a_k^{\top})(U_i,U_j) \neq \{0\}),  \\
    & \xi \in \QQ^n_{+\uparrow},\  \{U_i\}: \text{complete flags in $\KK^n$}. 
    \end{split}
\end{align}
We show that this problem reduces to the LP-dual of $\max \{2c^{\top} y \mid y \in FMP({\cal H})\}$.

For a vector subspace $U \subseteq \KK^n$, let $U^{\bot}$ 
denote the orthogonal complement of $U$ with 
respect to bilinear form $(x,y) \mapsto \sum_{i=1}^n x_iy_i$. 
\begin{Lem}
	For $i \leq j$,
	it holds $(a_k b_k^{\top} - b_k a_k^{\top})(U_i,U_j) \neq \{0\}$ if and only if $\dim H_k \cap 
 U_i^{\bot} \leq 1$ and $\dim H_k \cap U_j^{\bot} = 0$.
\end{Lem}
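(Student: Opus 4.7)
The plan is to factor the bilinear form $A_k(x,y) := x^{\top}(a_kb_k^{\top}-b_ka_k^{\top})y$ through the linear map $\phi_k \colon \KK^n \to \KK^2$ defined by $\phi_k(x) := (x^{\top}a_k,\, x^{\top}b_k)$. A direct computation gives
\[
A_k(x,y) = (x^{\top}a_k)(y^{\top}b_k) - (x^{\top}b_k)(y^{\top}a_k) = \det\bigl[\phi_k(x)\ \phi_k(y)\bigr],
\]
so $A_k(x,y)\neq 0$ if and only if $\phi_k(x)$ and $\phi_k(y)$ are linearly independent in $\KK^2$. Since $a_k,b_k$ span the $2$-dimensional space $H_k$, the map $\phi_k$ is surjective with $\ker\phi_k = H_k^{\bot}$.

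Next I would reduce the condition $A_k(U_i, U_j)\neq\{0\}$, that is, the existence of $x \in U_i$ and $y \in U_j$ with $\phi_k(x),\phi_k(y)$ independent, to a pair of dimension conditions. Using $U_i \leqslant U_j$ (since $i \leq j$), both $\phi_k(x)$ and $\phi_k(y)$ lie in $\phi_k(U_j)$, so independence forces $\dim \phi_k(U_j) = 2$. Conversely, if $\dim \phi_k(U_j) = 2$ and $\dim \phi_k(U_i)\geq 1$, pick any $x\in U_i$ with $\phi_k(x)\neq 0$ and then choose $y \in U_j$ with $\phi_k(y)\notin \vecspan \phi_k(x)$; such $y$ exists since $\phi_k(U_j)=\KK^2$. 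Thus $A_k(U_i,U_j)\neq\{0\}$ iff $\dim\phi_k(U_i)\geq 1$ and $\dim\phi_k(U_j)=2$.

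Finally I would translate these dimension inequalities into intersection dimensions with $H_k$. Applying the orthogonal complement formula $(H_k \cap U^{\bot})^{\bot} = H_k^{\bot} + U$ and the standard dimension identity yields
\[
\dim \phi_k(U) = \dim U - \dim(U \cap H_k^{\bot}) = 2 - \dim(H_k \cap U^{\bot})
\]
for any subspace $U \leqslant \KK^n$. Substituting $U = U_i$ and $U = U_j$ converts the two conditions into $\dim H_k \cap U_i^{\bot} \leq 1$ and $\dim H_k \cap U_j^{\bot} = 0$, as required.

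The argument is entirely routine linear algebra; the only mild subtlety is keeping track that the hypothesis $i \leq j$ is used precisely to force $\phi_k(U_i) \subseteq \phi_k(U_j)$, so that a nontrivial witness $y$ must actually live in the $2$-dimensional image $\phi_k(U_j)$. No deep obstacle is anticipated.
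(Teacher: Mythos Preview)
Your proof is correct and follows essentially the same route as the paper: both factor the bilinear form through the map $x \mapsto (a_k\ b_k)^{\top}x$ (your $\phi_k$), reduce the nonvanishing condition to $\dim\phi_k(U_i)\geq 1$ and $\dim\phi_k(U_j)=2$, and then use the identity $\dim\phi_k(U)=2-\dim H_k\cap U^{\bot}$. The only cosmetic difference is that the paper writes the factorization as $(a_k\ b_k)\begin{pmatrix}0&1\\-1&0\end{pmatrix}(a_k\ b_k)^{\top}$ whereas you express the same thing as a $2\times 2$ determinant.
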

\begin{proof}
From the expression $a_k b_k^{\top} - b_k a_k^{\top} = (a_k\ b_k) {0\ 1 \choose -1\ 0} (a_k\ b_k)^{\top}$, we observe that $(a_k b_k^{\top} - b_k a_k^{\top})(U_i,U_j) = \{0\}$ if  
$(a_k\ b_k)^{\top} U_i$ and $(a_k\ b_k)^{\top} U_j$ are the same $1$-dimensional space.
Then,
 $(a_k b_k^{\top} - b_k a_k^{\top})(U_i,U_j) \neq \{0\}$ if and only if $\dim\, (a_k\ b_k)^{\top} U_i \geq 1$ 
 and $\dim\, (a_k\ b_k)^{\top} U_j = 2$. 
From the relation $\dim\, (a_k\ b_k)^{\top} U = 2 - \dim H_k \cap U^{\bot}$, we have the claim.
\end{proof}

For $i \in [n]$, define $\lambda_i \geq 0$ by
\[
\lambda_i := \xi_i - \xi_{i-1},
\]
where we let $\xi_0 := 0$.
\begin{Lem}
	For $k \in [m]$, the constraints $\xi_i +\xi_j  \geq c_k$ for $i,j \in [n]$ with 
 $(a_kb_k^{\top}- b_ka_k^{\top})(U_i,U_j) \neq \{0\}$ are written as a single constraint
	\[
	\sum_{i=1}^n \lambda_i \dim H_k \cap U_{i-1}^{\bot} \geq c_k.
	\]
\end{Lem}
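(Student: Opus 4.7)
The plan is to reduce the family of constraints over $(i,j)$ to a single binding one and then rewrite that binding constraint via a telescoping identity with the $\lambda_i$.

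First, since the inequality $\xi_i + \xi_j \geq c_k$ is symmetric in $i,j$, and since $(a_k b_k^\top - b_k a_k^\top)(U_i, U_j) \neq \{0\}$ iff the same holds with $i,j$ swapped, I may restrict attention to pairs with $i \leq j$. For such a pair the previous lemma says the constraint is active precisely when $d_i \leq 1$ and $d_j = 0$, where $d_i := \dim H_k \cap U_i^\bot$. Note that $U_i \leqslant U_j$ implies $U_i^\bot \geqslant U_j^\bot$, so $i \mapsto d_i$ is nonincreasing; also $d_0 = \dim H_k \cap \KK^n = 2$ and $d_n = \dim H_k \cap \{0\} = 0$. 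Define
\begin{equation*}
i^* := \min\{i \in [n] : d_i \leq 1\}, \qquad j^* := \min\{j \in [n] : d_j = 0\},
\end{equation*}
so $i^* \leq j^*$ both exist.

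Next, since $\xi \in \QQ^n_{+\uparrow}$ is nondecreasing, among active pairs $(i,j)$ with $i \leq j$, the value $\xi_i + \xi_j$ is minimized exactly at $(i,j) = (i^*, j^*)$. Hence the whole family of constraints is equivalent to the single inequality $\xi_{i^*} + \xi_{j^*} \geq c_k$.

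It remains to rewrite $\xi_{i^*} + \xi_{j^*}$ in terms of the $\lambda_i$. Using $\xi_h = \sum_{i=1}^h \lambda_i$ I get
\begin{equation*}
\xi_{i^*} + \xi_{j^*} = \sum_{i=1}^n \mu_i \lambda_i, \quad \text{where } \mu_i = \begin{cases} 2 & \text{if } 1 \leq i \leq i^*, \\ 1 & \text{if } i^* < i \leq j^*, \\ 0 & \text{if } i > j^*. \end{cases}
\end{equation*}
On the other hand, by the definitions of $i^*$ and $j^*$ together with monotonicity of $d_\cdot$, one checks $d_{i-1} = 2$ for $i \leq i^*$, $d_{i-1} = 1$ for $i^* < i \leq j^*$, and $d_{i-1} = 0$ for $i > j^*$; that is, $\mu_i = d_{i-1}$ for all $i$. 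Therefore
\begin{equation*}
\xi_{i^*} + \xi_{j^*} = \sum_{i=1}^n \lambda_i \dim H_k \cap U_{i-1}^\bot,
\end{equation*}
yielding the claimed single inequality. The only subtlety to double-check is the degenerate case where $\dim H_k < 2$ (possible if $a_k, b_k$ happen to be dependent), but then $a_k b_k^\top - b_k a_k^\top = 0$ and both sides trivially agree; otherwise $i^*, j^*$ are well defined and the argument above is a routine verification, so there is no real obstacle.
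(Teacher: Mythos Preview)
Your proof is correct and follows essentially the same route as the paper's own proof: both restrict to $i\le j$, invoke the previous lemma to identify the indices $i^*,j^*$, use monotonicity of $\xi$ to reduce the family of constraints to the single one $\xi_{i^*}+\xi_{j^*}\ge c_k$, and then rewrite $\xi_{i^*}+\xi_{j^*}$ as $\sum_i \lambda_i\dim H_k\cap U_{i-1}^\bot$ by matching the coefficients. Your treatment is slightly more explicit (introducing $\mu_i$ and noting the degenerate $\dim H_k<2$ case), but there is no substantive difference in method.
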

\begin{proof}
Let $i^*$ and $j^*$ denote the first indices with $\dim H_k \cap U_i^{\bot} = 1$ and $\dim H_k \cap U_j^{\bot} = 0$, respectively.
By the previous lemma, for $i \leq j$, 
 $(a_kb_k^{\top}- b_ka_k^{\top})(U_i,U_j) \neq \{0\}$
 if and only if $i^* \leq i$ and $j^* \leq j$.
As $\xi \in \QQ_{\uparrow}^n$,
the constraints in question are written as one constraint $\xi_{i^*} + \xi_{j^*} \geq c_k$.
Since $\dim H_k \cap U_i^{\bot} = 2$ for $i < i^*$,
$\dim H_k \cap U_i^{\bot} = 1$ for $i^* \leq i < j^{*}$,
and $\dim H_k \cap U_i^{\bot} = 0$ for $j^* < i$. 
we have $\xi_{i^*} + \xi_{j^*} = \sum_{i=1}^{i^*} \lambda_i +  \sum_{j=1}^{j^*}  \lambda_j 
= \sum_{i=1}^{i^*} 2 \lambda_i + \sum_{i=i^*+1}^{j^*} \lambda_i 
= \sum_{i=1}^n \lambda_i \dim H_k \cap U_{i-1}^{\bot}$, as required.
\end{proof}

By using $\lambda$, the objective function is written as
\[
2 \sum_{i=1}^n (\lambda_1+ \lambda_{2} + \cdots + \lambda_i)
= 2 \sum_{i=1}^n (n-i+1) \lambda_i =2 \sum_{i=1}^n \lambda_i \dim U_{i-1}^{\bot}. 
\]
For $i \in [n]$, define $X_i := U_{i-1}^{\bot}$.
Then our problem is rewritten as
\begin{eqnarray*}
	{\rm Min.} && 2 \sum_{i=1}^{n} \lambda_i \dim X_i \\
	{\rm s.t.} && \sum_{i=1}^{n} \lambda_i \dim X_i \cap H_k  \geq c_k\quad (k \in [m]),\\
	&& \KK^n = X_1 > X_2 > \cdots > X_{n} > \{0\},\  \lambda \in \QQ^{n}_+.
\end{eqnarray*}
Consider the following relaxation, which is an infinite linear program:
\begin{align}\label{eqn:relax}
\begin{split}
{\rm Min.} \quad & 2 \sum_{X \in {\cal S}(\KK^n)} \lambda(X) \dim X \\
	{\rm s.t.} \quad & \sum_{X \in {\cal S}(\KK^n)} \lambda(X) \dim X \cap H_k  \geq c_k\quad (k \in [m]), \\
	\quad & \lambda: {\cal S}(\KK^n) \to \QQ_+, |\{X \in {\cal S}(\KK^n) \mid \lambda(X) > 0\}| < \infty, 
\end{split}
\end{align}
where ${\cal S}(\KK^n)$ denotes the family of all vector subspaces of $\KK^n$.

This relaxation is tight. To see this, we use the standard argument of {\em uncrossing}.
A function $f:{\cal S}(\KK^n) \to \QQ$ is called {\em submodular} if $f(X)+f(Y) \geq f(X \cap Y) + f(X+Y)$ for every $X,Y \in {\cal S}(\KK^n)$. In addition, it is called {\em strict} if the inequality is strict for every $X,Y$ with $X \not \leqslant Y$ and  $Y \not \leqslant X$.  
\begin{Lem}
	\begin{itemize}
		\item[{\rm (1)}] The function $X \mapsto - \dim X \cap H$ is submodular for any $H \leqslant \KK^n$.
		\item[{\rm (2)}] The function $X \mapsto \dim X \dim X^{\bot}$ is strict submodular.
	\end{itemize}
\end{Lem}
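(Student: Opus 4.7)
The plan is to reduce both claims to a single primitive fact: the dimension function on the lattice of subspaces of $\KK^n$ is \emph{modular}, i.e., $\dim X + \dim Y = \dim(X \cap Y) + \dim(X + Y)$ for any $X, Y \leqslant \KK^n$.

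For part (1), I would apply modularity to the pair $X \cap H, Y \cap H$. Observing the identity $(X \cap H) \cap (Y \cap H) = (X \cap Y) \cap H$ together with the inclusion $(X \cap H) + (Y \cap H) \leqslant (X + Y) \cap H$, modularity and monotonicity of $\dim$ immediately yield
\[
\dim(X \cap H) + \dim(Y \cap H) \leq \dim((X \cap Y) \cap H) + \dim((X + Y) \cap H).
\]
This is supermodularity of $X \mapsto \dim(X \cap H)$, i.e., submodularity of its negation.

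For part (2), I would parameterize $a := \dim X$, $b := \dim Y$, $c := \dim(X \cap Y)$, $e := \dim(X + Y)$, so that modularity gives $a + b = c + e$; since $X \cap Y \leqslant X, Y \leqslant X + Y$ we also have $c \leq \min(a,b)$ and $e \geq \max(a,b)$. Writing $g(Z) := \dim Z \cdot \dim Z^{\bot} = \dim Z \,(n - \dim Z)$ and substituting $e = c + (a - c) + (b - c)$, a short expansion of $a(n-a) + b(n-b) - c(n-c) - e(n-e)$ collapses (the linear-in-$n$ terms cancel because $a + b = c + e$) to
\[
g(X) + g(Y) - g(X \cap Y) - g(X + Y) = 2(a - c)(b - c).
\]
This is nonnegative in general and strictly positive precisely when $c < a$ and $c < b$, i.e., when neither of $X, Y$ contains the other.

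There is no substantive obstacle: both parts reduce to elementary dimension accounting via modularity. The only subtle point is confirming that equality in (2) corresponds exactly to the comparable case $X \leqslant Y$ or $Y \leqslant X$, but this is immediate since $\dim X = \dim(X \cap Y)$ forces $X = X \cap Y \leqslant Y$. Thus the proof is essentially one application of modularity in each part, with part (2) additionally requiring the identification of the quadratic second-difference $2(a-c)(b-c)$.
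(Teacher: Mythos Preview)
Your proof is correct. Part (1) is essentially identical to the paper's argument. For part (2), the paper takes a different route: it chooses a common basis $\{u_1,\ldots,u_n\}$ of $\KK^n$ such that $X,Y,X\cap Y,X+Y$ are all spanned by subsets $I,J,I\cap J,I\cup J$ of this basis, and then invokes the strict submodularity of the set function $I \mapsto |I|\,|[n]\setminus I|$. Your approach is more self-contained: you bypass the common-basis construction entirely and compute the submodular excess directly as $2(a-c)(b-c)$ from modularity alone. The paper's approach makes the analogy with ordinary set-function submodularity explicit (and connects to the building-theoretic viewpoint the paper emphasizes), while yours is shorter and avoids the auxiliary fact about simultaneous bases.
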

\begin{proof}
(1) follows from $\dim X \cap H + \dim Y \cap H = \dim X \cap Y \cap H + \dim (X \cap H)+ (Y \cap H) \leq \dim (X \cap Y) \cap H + \dim (X + Y) \cap H$.
(2) follows from:
For $X,Y \leqslant \KK^n$, we can choose basis $\{u_1,u_2,\ldots,u_n\}$ of $\KK^n$
such that $X = \vecspan \{u_i\}_{i \in I}$,
$Y = \vecspan \{u_i\}_{i \in J}$, 
$X+Y = \vecspan \{u_i\}_{i \in I \cup J}$,
and $X \cap Y = \vecspan \{u_i\}_{i \in I \cap J}$ for some $I,J \subseteq [n]$.
Then the strict submodularity reduces to
the well-known fact that $I \mapsto |I||[n] \setminus I|$ for $I \subseteq [n]$ is (usual)
strict submodular.
\end{proof}

Now consider a solution $\lambda$ of (\ref{eqn:relax}), where 
$K$ denotes the common denominator of nonzero rationals $\lambda(X)$. 
Suppose that there are $X,Y \in {\cal S}(\KK^n)$ such that
$\lambda (Y) > 0$, $\lambda (X) > 0$, $X \not \leqslant Y$, and $Y \not \leqslant X$.
Then, for $\epsilon := \min(\lambda(X),\lambda(Y))$, replace $\lambda(X) := \lambda(X) - \epsilon$,
$\lambda(Y) := \lambda(Y) - \epsilon$,
$\lambda(X+Y) := \lambda(X+Y)+ \epsilon$, and  $\lambda(X \cap Y) := \lambda(X \cap Y)+ \epsilon$.
By submodularity (1), the resulting $\lambda$ is feasible (with common denominator $K$).
By modularity of $X \mapsto \dim X$, the objective value does not change.
By strict submodularity (2),
the (nonnegative) quantity $\sum_{X} \lambda(X) \dim X \dim X^{\bot}$
strictly decreases at least by $1/K$.
By repeating this in finite many times,
we obtain a solution $\lambda$ keeping the objective value so that the nonzero support of $\lambda$ is a chain.
Then we obtain a solution of the original problem (by omitting $\lambda(\{0\})$).

Finally, consider the LP-dual of (\ref{eqn:relax}):
$$
{\rm Max.} \quad \sum_{k=1}^m c_k z_k \quad
	{\rm s.t.} \quad \sum_{k=1}^m z_k \dim X \cap H_k \leq 2 \dim X \ (X \leqslant \KK^n), \ z \in \QQ_+^m.
$$
The strong duality holds, since the number of inequalities appearing in this problem is finite.
For $y = z/2$, this is nothing but twice $\max\{ c^{\top}y \mid y \in FMP({\cal H})\}$.

\subsection{Brascamp-Lieb inequality}

The celebrated {\em Brascamp-Lieb inequality}~\cite{BrascampLieb,Lieb} states that:
Given surjective linear maps $B_j: \RR^n \to \RR^{n_j}$ and nonnegative reals 
$p_j \in \RR_+$ for $j=1,2,\ldots,m$ ({\em BL-datum}), the inequality 
\begin{equation}
\int_{x \in \RR^n} \prod_{j=1}^m f_j(B_jx)^{p_j}
{\rm d}x
\leq C_{\rm BL} \prod_{j=1}^m \left( \int_{x_j \in \RR^{n_j}} f_j(x_j) {\rm d}x_j \right)^{p_j} 
\end{equation}
holds for any nonnegative-valued measurable funtions $f_j: \RR^{n_j} \to \RR_+$ $(j=1,2,\ldots,m)$. 
The best constant (the {\em BL-constant}) $C_{\rm BL}$ is given by
\begin{equation}
C_{\rm BL} := \left( \sup_{X_j} \frac{\prod_{j=1}^m (\det X_j)^{p_j}}{\det \sum_{j=1}^m p_j B_j^{\top}X_j B_j} \right)^{\frac{1}{2}}  \in [0,\infty),
\end{equation}
where $X_j$ ranges over all positive definite matrices of size $n_j$.
Several important geometric inequalities, such as the H\"{o}lder inequality, can be deduced from the BL-inequality for special BL-data. 
The most familiar one is the Cauchy-Schwarz inequality, 
which corresponds to $p_1=p_2 =1/2, B_1=B_2=I$.
The interesting case of the BL-inequality is of $C_{\rm BL} < \infty$, which is characterized by Bennett et al.~\cite{BCCT08} as follows:
\begin{Thm}[\cite{BCCT08}]
Given BL-datum $B_j: \RR^n \to \RR^{n_j}$, $p_j \in \RR_+$ $(j=1,2,\ldots,m)$,
the BL-constant is finite if and only if 
the BL-datum satisfies 
 $n = \sum_{j=1}^{n}p_jn_j$  and
$\sum_{j=1}^m p_j \dim B_j V \geq \dim V$ for every $V \leqslant \RR^n$.
\end{Thm}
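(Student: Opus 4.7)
The plan is to prove the two directions by different methods, with necessity handled by explicit test functions and sufficiency reduced to a finite-dimensional convex problem via Lieb's theorem.

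For necessity ($\Rightarrow$), I would plug carefully chosen nonnegative $f_j$ into the BL-inequality and let a parameter degenerate. First, to obtain the scaling identity $n = \sum_{j} p_j n_j$, take Gaussians $f_j(x_j) = e^{-\pi|x_j|^2}$ and apply the rescaling $f_j \mapsto f_j(t \, \cdot\,)$. The left-hand side transforms as $t^{-\sum_j p_j n_j}$ (up to a Jacobian on $\RR^n$), while the right-hand side transforms as $t^{-\sum_j p_j n_j}$ after changing variables; a mismatch in exponents forces $C_{\rm BL} \in \{0,\infty\}$, so finiteness and nonvanishing require $n = \sum_j p_j n_j$. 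For the dimension inequality, given $V \leqslant \RR^n$, test with Gaussians $f_j(x_j) = \exp\bigl(-\pi |P_j x_j|^2 - \pi \varepsilon^{-2}|(I - P_j) x_j|^2\bigr)$, where $P_j$ denotes the orthogonal projection in $\RR^{n_j}$ onto $B_j V$. A direct computation shows the left-hand side scales like $\varepsilon^{-(\dim V)}$ (because the integrand concentrates near $V$) while the right-hand side scales like $\varepsilon^{-\sum_j p_j (n_j - \dim B_j V)}$; combined with the already-proved identity $n = \sum_j p_j n_j$, finiteness of the ratio as $\varepsilon \to 0$ forces $\sum_j p_j \dim B_j V \geq \dim V$.

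For sufficiency ($\Leftarrow$), I would invoke Lieb's theorem to reduce to Gaussian inputs: the BL-constant equals
\[
C_{\rm BL}^2 = \sup_{A_j \succ 0} \frac{\prod_{j=1}^m (\det A_j)^{p_j}}{\det \sum_{j=1}^m p_j B_j^{\top} A_j B_j},
\]
so it suffices to bound this supremum. The natural approach is to show that the logarithm
\[
F(A_1,\ldots,A_m) = \sum_{j=1}^m p_j \log \det A_j - \log \det \sum_{j=1}^m p_j B_j^{\top} A_j B_j
\]
is bounded above under the hypotheses. This functional is invariant under the diagonal scaling $A_j \mapsto \lambda A_j$ (using $n = \sum_j p_j n_j$), so one may normalize. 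The remaining task is a compactness/limit analysis: if $F(A^{(k)}) \to \infty$ along some sequence, one extracts (along a subsequence) a filtration of $\RR^n$ on which the $A_j^{(k)}$ concentrate their spectrum, and on the ``most degenerate'' subspace $V$ in this filtration one reads off a violation of $\sum_j p_j \dim B_j V \geq \dim V$.

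The hard step will be the compactness argument in sufficiency. The functional $F$ is geodesically concave in the $A_j$ on the nonpositively curved symmetric space of positive-definite matrices (in the spirit of the GGOW/HamadaHirai framework alluded to earlier in the paper), and the set of ``directions to infinity'' in this symmetric space is organized by flags in $\RR^n$; the dimension inequality is precisely the slope condition along each such direction. Making this argument rigorous amounts to a Kempf-Ness / Hilbert-Mumford style analysis, which is the essential content of the BCCT theorem. An alternative route avoiding Lieb is the heat-flow monotonicity of Bennett, Carbery, Christ, and Tao, which directly bounds the inequality by degenerating $f_j$ along the Ornstein-Uhlenbeck semigroup; this sidesteps the symmetric-space compactness but introduces PDE estimates that are technically heavier.
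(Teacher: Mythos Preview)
The paper does not prove this theorem at all: it is quoted from \cite{BCCT08} and used as a black box to introduce the BL-polytope, with no proof or even sketch provided. So there is nothing in the paper to compare your proposal against.

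That said, your outline is a fair summary of the original BCCT strategy: necessity by degenerating Gaussian test functions (scaling for the identity $n=\sum_j p_j n_j$, anisotropic Gaussians concentrated near a subspace $V$ for the dimension inequality), and sufficiency via Lieb's reduction to the Gaussian supremum followed by a compactness or induction-on-dimension analysis of the log-determinant functional. Two cautions if you intend to flesh this out. First, your exponent bookkeeping in the necessity step is not quite right as written: the left-hand side does not simply scale like $\varepsilon^{-\dim V}$ but rather like $\varepsilon^{n-\dim V}$ times a bounded integral, and the comparison with the right-hand side after using $n=\sum_j p_j n_j$ is what yields the inequality; this needs to be redone carefully. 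Second, in BCCT the sufficiency direction is carried out by an induction on $n$ using a critical-subspace decomposition rather than a direct Kempf--Ness argument; the geodesic-concavity/symmetric-space viewpoint you describe is closer to the later operator-scaling literature (e.g., \cite{GGOW18}) and is a legitimate alternative, but it is not how BCCT actually argue.
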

In particular, the set of all vectors $p = (p_j) \in \RR^m_+$
with finite BL-constant
forms a rational polytope, 
which is called the {\em BL-polytope} associated with $\{B_j\}_{j}$ 
and is denoted by $BLP(\{B_j\}_j)$. 

The computational complexity of the BL-polytope
has recently attracted attention in theoretical computer science~\cite{GGOW18}.
To the best of our knowledge, only a weak membership algorithm for the general BL-polytope is known~\cite{Burgisser2018a}.
More precisely, given a rational point $p$, the algorithm of \cite{Burgisser2018a} can distinguish whether $p$ in the BL-polytope or $p$ is $\eps$-far from the BL-polytope in time $\poly(N,\eps)$, where $N$ is the total bit complexity of the input BL datum.
It is open if one can improve the time complexity to $\poly(N, \log(1/\eps))$.

BL-polytopes are interesting from the combinatorial optimization point of view: It is known~\cite{Barthe1998} that 
if each $B_j$ is rank-1, the BL-polytope is equal to 
the base polytope of the linear matroid associated 
with the row vectors of $B_j$.
In this case, the (strong) membership of the BL-polytope
can be solved in polynomial time, via the separation-optimization equivalence~\cite{GLS} 
with the matroid greedy algorithm. 

A further interesting connection was discovered by
Franks, Soma, and Goemans~\cite{FranksSomaGoemans2022}.
They showed that
the BL-polytope for rank-2 matrices (the {\em rank-2 BL-polytope}) coincides with
the perfect fractional matroid matching polytope for the row spaces of these matrices.
\begin{Thm}[\cite{FranksSomaGoemans2022}]
Suppose that each $B_j$ is rank-2.
Then the BL-polytope $BLP(\{B_j\}_j)$ coincides with
the perfect fractional matching polytope $PFMP(\{H_j\}_j)$, 
where $H_j$ is the $2$-dimensional subspace in $\RR^n$ spanned by the row vectors of $B_j$.
\end{Thm}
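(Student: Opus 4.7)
The plan is to show that, when each $B_j$ is rank-$2$, the inequalities defining $BLP(\{B_j\}_j)$ and those defining $PFMP(\{H_j\}_j)$ become equivalent under the orthogonal-complement correspondence $V \leftrightarrow X := V^{\bot}$ between subspaces of $\RR^n$. I do not anticipate a genuine obstacle: the proof is essentially a bookkeeping calculation anchored by a single dimension identity, together with the observation that the scalar normalization constraints on $p$ coincide.

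First I would establish the key identity: since $B_j$ has row space $H_j$, one has $\ker B_j = H_j^{\bot}$, so rank-nullity applied to $B_j|_V$ gives $\dim B_j V = \dim V - \dim(V \cap H_j^{\bot})$. Combining this with the standard modular relations
\[
\dim(V \cap H_j^{\bot}) = \dim V + \dim H_j^{\bot} - \dim(V + H_j^{\bot}), \qquad \dim(V + H_j^{\bot}) = n - \dim(V^{\bot} \cap H_j),
\]
and $\dim H_j^{\bot} = n - 2$, I obtain
\[
\dim B_j V = 2 - \dim(H_j \cap V^{\bot}) = \dim H_j - \dim(H_j \cap X).
\]

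Second, I substitute this identity into the BL-condition $\sum_j p_j \dim B_j V \geq \dim V$. Writing $\dim V = n - \dim X$ and using $2 \sum_j p_j = n$, which is simultaneously the BL dimension equality $n = \sum_j p_j n_j$ (because $n_j = 2$) and the perfectness condition $2 \mathbf{1}^{\top} p = n$ defining $PFMP$, the BL inequality transforms into
\[
\sum_{j=1}^m p_j \dim(H_j \cap X) \leq \dim X,
\]
which is precisely the defining inequality of $FMP(\{H_j\}_j)$. Since orthogonal complementation is an involution on the lattice of subspaces of $\RR^n$, quantification over $V$ and over $X = V^{\bot}$ is interchangeable, and the scalar normalizations match; hence $p \in BLP(\{B_j\}_j)$ if and only if $p \in PFMP(\{H_j\}_j)$.
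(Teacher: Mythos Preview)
Your argument is correct. The identity $\dim B_j V = 2 - \dim(H_j \cap V^{\bot})$ is derived cleanly, and the substitution together with the shared normalization $2\sum_j p_j = n$ does exactly what is needed to match the defining systems of inequalities of $BLP(\{B_j\}_j)$ and $PFMP(\{H_j\}_j)$ under the involution $V \leftrightarrow V^{\bot}$.

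As for comparison: the paper does not supply its own proof of this theorem. It is quoted from Franks, Soma, and Goemans~\cite{FranksSomaGoemans2022} and used as a black box to connect rank-$2$ BL-polytopes to the $\varDelta_\ell$/fractional matroid matching framework developed earlier. Your direct computational verification is the natural self-contained argument for this equivalence; nothing in the paper contradicts or duplicates it.
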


Therefore, a polynomial-time linear optimization algorithm for fractional matroid matching problem
implies a polynomial complexity of the membership algorithm for 
rank-2 BL-polytopes.
As mentioned in Section~\ref{subsub:Hungarian}, such an algorithm was 
given by Gijswijt and Pap~\cite{GP13}.
However, as pointed out by Franks, Soma, and Goemans~\cite{FranksSomaGoemans2022}, 
a direct application of this algorithm 
can yield exponential bit-complexities in intermediate numbers 
when $\KK = \QQ$ (the setting of the BL-inequality). 
They utilized modified operator scaling to establish
a good characterization for the membership of rank-2 BL-polytopes:
\begin{Thm}[\cite{FranksSomaGoemans2022}]
The membership problem of rank-2 BL polytopes is in ${\rm NP} \cap {\rm coNP}$.
\end{Thm}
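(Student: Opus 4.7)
The plan is to combine the just-cited equality $BLP(\{B_j\}_j) = PFMP(\{H_j\}_j)$ with a good characterization of membership in the perfect fractional matroid matching polytope. Since passing from a BL datum $\{B_j\}$ to the row-span datum $\{H_j\}$ is a polynomial-time reduction, it suffices to produce polynomial-size certificates in both directions for membership of a given rational $p$ in $PFMP(\mathcal{H})$.

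For the coNP direction, suppose $p \notin PFMP(\mathcal{H})$. The easy violations $p \not\geq 0$ or $2\mathbf{1}^\top p \neq n$ are trivially certifiable. Otherwise some subspace $X \leqslant \KK^n$ satisfies $\sum_j p_j \dim H_j \cap X > \dim X$. I would argue that such a violating $X$ can always be chosen from the finite lattice of subspaces generated by $H_1,\ldots,H_m$ under sum and intersection: the ``slack'' function $X \mapsto \dim X - \sum_j p_j \dim H_j \cap X$ is submodular (this is essentially the uncrossing lemma established in the proof of Theorem~\ref{thm:frac_matroid_matching}, which shows submodularity of $-\dim H_j \cap X$ combined with modularity of $\dim X$), so by repeated uncrossing its minimum is attained on such a lattice element. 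A basis of such $X$ has polynomial bit complexity in the input, yielding a short non-membership certificate.

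For the NP direction, by Carathéodory's theorem, if $p \in PFMP(\mathcal{H})$ then $p = \sum_{i=1}^{m+1} \lambda_i v^{(i)}$ with $v^{(i)}$ a vertex of $PFMP(\mathcal{H})$, $\lambda_i \geq 0$, and $\sum_i \lambda_i = 1$. The key step is to certify each vertex $v^{(i)}$ compactly. For this I would invoke the combinatorial characterization of vertices of the fractional matroid matching polytope due to Chang, Llewellyn, and Vande Vate, which describes the support of a vertex as a ``flower'' and shows that the vertex is uniquely determined by this combinatorial datum; in particular each $v^{(i)}$ is rational with polynomial bit complexity in the input. The verifier then checks, for each claimed $v^{(i)}$, that the flower condition holds and that the matroid matching constraints restricted to its support are tight as prescribed, and finally that $\sum_i \lambda_i v^{(i)} = p$.

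The main obstacle is the bit-complexity control on the NP side: a priori, $PFMP(\mathcal{H})$ is defined by infinitely many subspace inequalities, so its vertices could have uncontrolled size. The rank-2 structure is essential---it is precisely what enables the combinatorial flower characterization and hence short membership certificates, and without it the NP witness is much less clear. An alternative route, closer in spirit to \cite{FranksSomaGoemans2022}, is to replace the vertex decomposition by a modified operator-scaling fixed point as the NP witness, with bit complexity controlled via the analysis of scaling algorithms for tensor invariants; the coNP side would remain the subspace-lattice argument above.
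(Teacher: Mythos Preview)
This theorem is not proved in the paper; it is quoted from \cite{FranksSomaGoemans2022}, with the single remark that ``they utilized modified operator scaling to establish a good characterization.'' So there is no in-paper argument to compare against beyond that one-line description, and your closing ``alternative route'' is in fact the route the cited source takes.

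Your direct combinatorial approach, however, has a real gap on the coNP side. Uncrossing with the submodular function $X \mapsto -\sum_j p_j \dim(H_j\cap X)$ tells you that among minimizers of the slack you may pass from $X,Y$ to $X\cap Y,\, X+Y$; it does \emph{not} tell you that some minimizer lies in the lattice generated by $H_1,\dots,H_m$. A violating subspace may meet several $H_j$ in one-dimensional lines that are not themselves intersections of the $H_j$'s, and no amount of uncrossing among violators will force those lines into the sublattice you describe. So the step ``a basis of such $X$ has polynomial bit complexity'' is unjustified as written. (A correct bit-complexity bound for a separating subspace can be obtained, but it needs a different argument---for instance via the structure of dominant $2$-covers in the Chang--Llewellyn--Vande Vate theory, or via the operator-scaling analysis---not the bare uncrossing you invoke.)

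On the NP side your outline is closer to viable, but you are leaning on exactly the point the paper flags as delicate: it explicitly notes that a direct application of the Gijswijt--Pap/CLV machinery ``can yield exponential bit-complexities in intermediate numbers when $\KK=\QQ$,'' which is precisely why \cite{FranksSomaGoemans2022} resorted to operator scaling. Asserting that the flower characterization pins down vertices with polynomial bit size is the crux, and you have not argued it; without that, the Carath\'eodory certificate is not obviously short.
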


The coincidence between ${\cal Q}_n(A_{\cal H})$ and $PFMP({\cal H})$ established in the previous subsection enables us to apply our framework, particularly the modulo-$p$ reduction trick, 
for linear optimization over rank-2 BL polytopes. Thus we have:
\begin{Thm}
	The membership problem of rank-2 BL-polytopes is in {\rm P}.
\end{Thm}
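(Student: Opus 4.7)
The plan is to reduce membership in the rank-2 BL-polytope to a polynomial-time linear optimization over the nc-independent set polytope $\mathcal{Q}_n(A_{\cal H})$, and then invoke the Gr\"otschel--Lov\'asz--Schrijver (GLS) equivalence between optimization and separation. Given a rank-2 BL-datum $\{B_j\}_{j=1}^m$ over $\QQ$, form the 2-dimensional subspaces $H_j \leqslant \RR^n$ spanned by the rows of $B_j$, choose bases $a_j, b_j$ of $H_j$, and construct the skew-symmetric linear symbolic matrix $A_{\cal H} = \sum_{j=1}^m (a_j b_j^{\top} - b_j a_j^{\top}) x_j$ as in \eqref{eqn:linearmatroidmatching}. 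By the Franks--Soma--Goemans theorem, $BLP(\{B_j\}_j) = PFMP(\{H_j\}_j)$, and by Theorem~\ref{thm:frac_matroid_matching}, $PFMP(\{H_j\}_j) = \tfrac{1}{2}\,\mathcal{Q}_n(A_{\cal H})$. Hence, for a rational point $p \in \QQ^m$, deciding $p \in BLP(\{B_j\}_j)$ reduces to deciding $2p \in \mathcal{Q}_n(A_{\cal H})$.

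Next, I would show that a strong linear optimization oracle for $\mathcal{Q}_n(A_{\cal H})$ runs in polynomial time when $\KK = \QQ$. By Theorem~\ref{thm:polyhedral}, $\mathcal{Q}_n(A_{\cal H})$ is an integral polytope contained in $[0,n]^m$, and an integral maximizer of any linear function $c^{\top}u$ over $\mathcal{Q}_n(A_{\cal H})$ can be computed in polynomial time. For the BL application, the entries of $A_{\cal H}$ arise from clearing denominators of the rational matrices $B_j$, so they are integers of polynomially bounded absolute value $D$, and it suffices to invoke Theorem~\ref{thm:main_IH}~(2) (the modulo-$p$ reduction of Iwata--Kobayashi) to obtain the value $\varDelta_n(A_{\cal H}[c])$, and hence via the perturbation trick of \cite{HiraiIkeda} an integral optimal vertex, in time polynomial in $n, m, \log C, \log D$.

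Finally, since $\mathcal{Q}_n(A_{\cal H}) \subseteq [0,n]^m$ is an integral rational polytope with polynomially bounded vertex encoding, the GLS machinery applies in the strong form: polynomial-time exact linear optimization yields polynomial-time strong separation, and hence a polynomial-time strong membership oracle. Applying this oracle to the point $2p$ decides membership in $BLP(\{B_j\}_j)$, placing the problem in \textrm{P}.

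The main obstacle lies not in the reduction itself but in maintaining polynomial bit-complexity throughout. A na\"ive implementation using the combinatorial algorithm of Gijswijt--Pap (or our own variant in Section~\ref{subsub:frac_linear_algo}) for fractional matroid matching can produce intermediate numbers of exponential bit-length over $\QQ$, as already noted in \cite{FranksSomaGoemans2022}. The crucial use of Theorem~\ref{thm:main_IH}~(2) is precisely to circumvent this issue by working modulo several primes of polynomial bit-length rather than directly over $\QQ$; once this polynomial-time optimization oracle is in hand, the GLS framework takes care of everything else.
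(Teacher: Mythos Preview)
Your proposal is correct and follows essentially the same route as the paper: identify $BLP(\{B_j\}_j)=PFMP(\{H_j\}_j)=\tfrac12\,\mathcal{Q}_n(A_{\cal H})$ via the Franks--Soma--Goemans theorem and Theorem~\ref{thm:frac_matroid_matching}, invoke the modulo-$p$ reduction (Theorem~\ref{thm:main_IH}~(2)) to obtain a bit-complexity-bounded polynomial-time linear optimization oracle over $\mathcal{Q}_n(A_{\cal H})$, and then apply the GLS optimization--separation equivalence. The paper states this in one sentence before the theorem; you have simply spelled out the steps, including the crucial observation that the modulo-$p$ trick is what rescues the bit-complexity issue that blocks a direct use of the Gijswijt--Pap algorithm.
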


\section*{Acknowledgments}
The first author was supported by JST PRESTO Grant Number JPMJPR192A and JSPS KAKENHI Grant Number JP21K19759.
The second author was supported by
JSPS KAKENHI Grant Numbers JP20K23323, JP20H05795, JP22K17854.
The third author was supported by
JST ERATO Grant Number JPMJER1903 and JSPS KAKENHI Grant Number JP22K17853.
The fourth author was supported by JSPS KAKENHI Grant Number JP19K20212.

\bibliographystyle{plain}
\bibliography{degdet_improved}

\begin{thebibliography}{10}

\bibitem{Allen-Zhu2018}
Z.~{Allen-Zhu}, A.~Garg, Y.~Li, R.~Oliveira, and A.~Wigderson.
\newblock Operator scaling via geodesically convex optimization, invariant
  theory and polynomial identity testing.
\newblock In {\em Proceedings of the 50th Annual ACM SIGACT Symposium on Theory
  of Computing}, pages 172--181, 2018.

\bibitem{Amitsur}
S.~A. Amitsur.
\newblock Rational identities and applications to algebra and geometry.
\newblock {\em Journal of Algebra}, 3:304--359, 1966.

\bibitem{Barthe1998}
F.~Barthe.
\newblock On a reverse form of the {B}rascamp-{L}ieb inequality.
\newblock {\em Inventiones Mathematicae}, 134(2):335--361, 1998.

\bibitem{BCCT08}
J.~Bennett, A.~Carbery, M.~Christ, and T.~Tao.
\newblock The {Brascamp-Lieb} inequalities: Finiteness, structure and
  extremals.
\newblock {\em Geometric and Functional Analysis}, 17:1343--1415, 2008.

\bibitem{CoxeterMatroid}
A.~V. Borovik, I.~M. Gelfand, and N.~White.
\newblock {\em Coxeter {M}atroids}.
\newblock Birkh\"{a}user, Boston, MA, 2003.

\bibitem{BrascampLieb}
H.~Brascamp and E.~Lieb.
\newblock Best constants in young's inequality, its converse, and its
  generalization to more than three functions.
\newblock {\em Advances in Mathematics}, 20:151--173, 1976.

\bibitem{Burgisser2018a}
P.~Burgisser, C.~Franks, A.~Garg, R.~Oliveira, M.~Walter, and A.~Wigderson.
\newblock Efficient algorithms for tensor scaling, quantum marginals, and
  moment polytopes.
\newblock In {\em 2018 IEEE 59th Annual Symposium on Foundations of Computer
  Science (FOCS)}, pages 883--897, 2018.

\bibitem{BFGOWW}
P.~B{\"u}rgisser, C.~Franks, A.~Garg, R.~Oliveira, M.~Walter, and A.~Wigderson.
\newblock Towards a theory of non-commutative optimization: Geodesic 1st and
  2nd order methods for moment maps and polytopes.
\newblock In {\em Proceedings of the 60th {IEEE} Annual Symposium on
  Foundations of Computer Science (FOCS)}, pages 845--861, 2019.

\bibitem{CLV01a}
S.~Chang, D.~C. Llewellyn, and J.~H. Vande~Vate.
\newblock Matching 2-lattice polyhedra: finding a maximum vector.
\newblock {\em Discrete Mathematics}, 237:29--61, 2001.

\bibitem{CLV01b}
S.~Chang, D.~C. Llewellyn, and J.~H. Vande~Vate.
\newblock Two-lattice polyhedra: duality and extreme points.
\newblock {\em Discrete Mathematics}, 237:63--95, 2001.

\bibitem{Cohn}
P.~M. Cohn.
\newblock {\em Skew Fields: Theory of General Division Rings}.
\newblock Cambridge University Press, Cambridge, 1995.

\bibitem{Cohn_Algebra}
P.~M. Cohn.
\newblock {\em Further Algebra and Applications}.
\newblock Springer, London, 2003.

\bibitem{DerksenMakam2017}
H.~Derksen and V.~Makam.
\newblock Polynomial degree bounds for matrix semi-invariants.
\newblock {\em Advances in Mathematics}, 310:44--63, 2017.

\bibitem{Dieudonne}
J.~Dieudonn\'e.
\newblock Les d\'eterminants sur un corps non commutatif.
\newblock {\em Bulletin de la Soci\'et\'e Math\'ematique de France}, 71:27--45,
  1943.

\bibitem{Edmonds67}
J.~Edmonds.
\newblock Systems of distinct representatives and linear algebra.
\newblock {\em Journal of Research of the National Bureau of Standards},
  71B:241--245, 1967.

\bibitem{FortinReutenauer04}
M.~Fortin and C.~Reutenauer.
\newblock Commutative/non-commutative rank of linear matrices and subspaces of
  matrices of low rank.
\newblock {\em S\'eminaire Lotharingien de Combinatoire}, 52:B52f, 2004.

\bibitem{Frank1981}
A.~Frank.
\newblock A weighted matroid intersection algorithm.
\newblock {\em Journal of Algorithms}, 2:328--336, 1981.

\bibitem{FrankTardos1987}
A.~Frank and \'E. Tardos.
\newblock An application of simultaneous diophantine approximation in
  combinatorial optimization.
\newblock {\em Combinatorica}, 7:49--65, 1987.

\bibitem{FranksSomaGoemans2022}
C.~Franks, T.~Soma, and M.~X. Goemans.
\newblock Shrunk subspaces via operator sinkhorn iteration.
\newblock In {\em Proceedings of the 2023 Annual ACM-SIAM Symposium on Discrete
  Algorithms (SODA)}, pages 1655--1668. SIAM, 2023.

\bibitem{FurueHirai}
H.~Furue and H.~Hirai.
\newblock On a weighted linear matroid intersection algorithm by deg-det
  computation.
\newblock {\em Japan Journal of Industrial and Applied Mathematics},
  37:677--696, 2020.

\bibitem{GGOW18}
A.~Garg, L.~Gurvits, R.~Oliveira, and A.~Wigderson.
\newblock Algorithmic and optimization aspects of {Brascamp-Lieb} inequalities,
  via operator scaling.
\newblock {\em Geometric and Functional Analysis}, 28:100--145, 2018.

\bibitem{GGOW15}
A.~Garg, L.~Gurvits, R.~Oliveira, and A.~Wigderson.
\newblock Operator scaling: theory and applications.
\newblock {\em Foundations of Computational Mathematics}, 20:223--290, 2020.

\bibitem{Garrett}
P.~Garrett.
\newblock {\em Buildings and Classical Groups}.
\newblock Chapman \& Hall, London, 1997.

\bibitem{GeelenAlgebraicMatching}
J.~F. Geelen.
\newblock An algebraic matching algorithm.
\newblock {\em Combinatorica}, 20:61--70, 2000.

\bibitem{GeelenSurvey}
J.~F. Geelen.
\newblock An algebraic approach to matching problems.
\newblock {\em S\={u}rikaisekikenky\={u}sho Koky\={u}roku}, 1185:63--71, 2001.

\bibitem{GP13}
D.~Gijswijt and G.~Pap.
\newblock An algorithm for weighted fractional matroid matching.
\newblock {\em Journal of Combinatorial Theory, Series B}, 103:509--520, 2013.

\bibitem{GLS}
M.~Gr\"otchel, L.~Lov\'asz, and A.~Schrijver.
\newblock {\em Geometric Algorithms and Combinatorial Optimization}.
\newblock Springer-Verlag, Berlin, 1993.

\bibitem{HamadaHirai}
M.~Hamada and H.~Hirai.
\newblock Computing the nc-rank via discrete convex optimization on {CAT(0)}
  spaces.
\newblock {\em SIAM Journal on Applied Geometry and Algebra}, 5:455--478, 2021.

\bibitem{HiraiHadamard}
H.~Hirai.
\newblock Convex analysis on {H}adamard spaces and scaling problems.
\newblock {\em Foundations of Computational Mathematics}.
\newblock to appear.

\bibitem{HH_degdet}
H.~Hirai.
\newblock Computing the degree of determinants via discrete convex optimization
  on euclidean buildings.
\newblock {\em SIAM Journal on Applied Geometry and Algebra}, 3:523--557, 2019.

\bibitem{HiraiIkeda}
H.~Hirai and M.~Ikeda.
\newblock A cost-scaling algorithm for computing the degree of determinants.
\newblock {\em Computational Complexity}, 31, 2022.

\bibitem{HiraiIwamasaIPCO}
H.~Hirai and Y.~Iwamasa.
\newblock A combinatorial algorithm for computing the rank of a generic
  partitioned matrix with 2$\times$2 submatrices.
\newblock {\em Mathematical Programming, Series A}, 195:1--37, 2022.

\bibitem{ItoIwataMurota94}
H.~Ito, S.~Iwata, and K.~Murota.
\newblock Block-triangularizations of partitioned matrices under
  similarity/equivalence transformations.
\newblock {\em SIAM Journal on Matrix Analysis and Applications},
  15:1226--1255, 1994.

\bibitem{IQS15a}
G.~Ivanyos, Y.~Qiao, and K.~V. Subrahmanyam.
\newblock Non-commutative {Edmonds'} problem and matrix semi-invariants.
\newblock {\em Computational Complexity}, 26:717--763, 2017.

\bibitem{IQS15b}
G.~Ivanyos, Y.~Qiao, and K.~V. Subrahmanyam.
\newblock Constructive noncommutative rank computation in deterministic
  polynomial time over fields of arbitrary characteristics.
\newblock {\em Computational Complexity}, 27:561--593, 2018.

\bibitem{Iwamasa_degdet}
Y.~Iwamasa.
\newblock A combinatorial algorithm for computing the entire sequence of the
  maximum degree of minors of a generic partitioned polynomial matrix with
  2$\times$ 2 submatrices.
\newblock {\em Mathematical Programming, Series A}, pages 1--53, 2023.

\bibitem{IwataKobayashi2017}
S.~Iwata and Y.~Kobayashi.
\newblock A weighted linear matroid parity algorithm.
\newblock {\em SIAM Journal on Computing}, 51:238--280, 2022.

\bibitem{IwataMurota95}
S.~Iwata and K.~Murota.
\newblock A minimax theorem and a {Dulmage-Mendelsohn} type decomposition for a
  class of generic partitioned matrices.
\newblock {\em SIAM Journal on Matrix Analysis and Applications}, 16:719--734,
  1995.

\bibitem{IwataMurotaSakuta1996}
S.~Iwata, K.~Murota, and I.~Sakuta.
\newblock Primal-dual combinatorial relaxation algorithms for the maximum
  degree of subdeterminants.
\newblock {\em SIAM Journal on Scientific Computing}, 17:993--1012, 1996.

\bibitem{Kabanets2004}
V.~Kabanets and R.~Impagliazzo.
\newblock Derandomizing polynomial identity tests means proving circuit lower
  bounds.
\newblock {\em Computational Complexity}, 13:1--46, 2004.

\bibitem{li2022connections}
Y.~Li, Y.~Qiao, A.~Wigderson, Y.~Wigderson, and C.~Zhang.
\newblock Connections between graphs and matrix spaces.
\newblock {\em Israel Journal of Mathematics}.
\newblock to appear.

\bibitem{li2023linearalgebraic}
Y.~Li, Y.~Qiao, A.~Wigderson, Y.~Wigderson, and C.~Zhang.
\newblock On linear-algebraic notions of expansion.
\newblock arXiv:2212.13154, 2023.

\bibitem{Lieb}
E.~Lieb.
\newblock Gaussian kernels have only {G}aussian maximizers.
\newblock {\em Inventions Mathematicae}, 102:179--208, 1990.

\bibitem{Lovasz79}
L.~Lov\'asz.
\newblock On determinants, matchings, and random algorithms.
\newblock In {\em Fundamentals of Computation Theory FCT'79}, pages 565--574,
  1979.

\bibitem{Lovasz89}
L.~Lov\'asz.
\newblock Singular spaces of matrices and their application in combinatorics.
\newblock {\em Boletim da Sociedade Brasileira de Matem\'atica}, 20:87--99,
  1989.

\bibitem{Murota95_SICOMP}
K.~Murota.
\newblock Computing the degree of determinants via combinatorial relaxation.
\newblock {\em SIAM Journal on Computing}, 24:765--796, 1995.

\bibitem{MurotaMatrix}
K.~Murota.
\newblock {\em Matrices and Matroids for Systems Analysis}.
\newblock Springer-Verlag, Berlin, 2000.

\bibitem{MurotaDCA}
K.~Murota.
\newblock {\em Discrete {C}onvex {A}nalysis}.
\newblock Society for Industrial and Applied Mathematics (SIAM), Philadelphia,
  PA, 2003.

\bibitem{Oki}
T.~Oki.
\newblock Computing valuations of the {D}ieudonn\'e determinants.
\newblock {\em Journal of Symbolic Computation}, 116:284--323, 2023.

\bibitem{OkiSoma}
T.~Oki and T.~Soma.
\newblock Algebraic algorithms for fractional linear matroid parity via
  non-commutative rank.
\newblock In {\em Proceedings of the 2023 Annual ACM-SIAM Symposium on Discrete
  Algorithms (SODA)}, pages 4188--4204. SIAM, 2023.

\bibitem{SchrijverBook}
A.~Schrijver.
\newblock {\em Combinatorial {O}ptimization. {P}olyhedra and {E}fficiency}.
\newblock Springer-Verlag, Berlin, 2003.

\bibitem{VandeVate92}
J.~H. Vande~Vate.
\newblock Fractional matroid matchings.
\newblock {\em Journal of Combinatorial Theory, Series B}, 55:133--145, 1992.

\end{thebibliography}

\end{document}